\newtheorem{theorem}{Theorem}[section]
\newtheorem{theorem*}{Theorem A\!\!}
\newtheorem{proposition}{Proposition}[section]
\newtheorem{proposition*}{Proposition A\!\!}
\newtheorem{corollary}{Corollary}[section]
\newtheorem{corollary*}{Corollary A\!\!}
\newtheorem{definition*}{Definition}
\newtheorem{lemma}{Lemma}[section]
\DeclareMathOperator{\Arg}{Arg}
\DeclareMathOperator{\diag}{diag}
\DeclareMathOperator{\Exp}{Exp}
\DeclareMathOperator{\Ad}{Ad}
\DeclareMathOperator{\ad}{ad}
\DeclareMathOperator{\id}{id}
\DeclareMathOperator{\im}{im}    
\DeclareMathOperator{\Lie}{Lie}
\begin{document}
\title{The symplectic area of a geodesic triangle in a  Hermitian symmetric space of compact type}

\author{Mads Aunskj\ae r Bech, Jean-Louis Clerc \& Bent \O rsted}

\date{ }

\maketitle
\begin{abstract} Let $M$ be an irreducible Hermitian symmetric space of compact type, and let $\omega$ be its K\"ahler form.  For a triplet $(p_1,p_2,p_3)$ of points in $M$ we study conditions under which a geodesic triangle $\mathcal T(p_1,p_2,p_3)$ with vertices $p_1,p_2,p_3$ can be unambiguously  defined. We consider the integral $A(p_1,p_2,p_3)=\int_\Sigma \omega$, where $\Sigma$ is a surface filling the triangle $\mathcal T(p_1,p_2,p_3)$ and discuss the dependence of $A(p_1,p_2,p_3)$ on the surface $\Sigma$. Under mild conditions on the three points, we prove an explicit formula for $A(p_1,p_2,p_3)$ analogous to the known formula  for the symplectic area of a geodesic triangle in a non-compact Hermitian symmetric space. 

\end{abstract}

{2010 Mathematics Subject Classification : 32M15, 53C55, 57T15\\Key words : compact Hermitian symmetric space, K\"ahler form, geodesic triangle, automorphy kernel}
\section*{Introduction}

On a K\"ahlerian manifold $M$, the existence of a Riemannian (Hermitian) metric and of a K\"ahler form $\omega$ suggests to define the \emph{symplectic area of a geodesic triangle}. Given three points $p_1,p_2,p_3\in M$, consider the geodesic triangle  $\mathcal T(p_1,p_2,p_3)$ obtained by joining $p_1, p_2$ (resp. $(p_2,p_3)$, $(p_3,p_1)$) by a geodesic segment. Choose a surface $\Sigma(p_1,p_2,p_3)$  in $M$ having the geodesic triangle as its (oriented) boundary and define 

\[A(p_1,p_2,p_3)=\int_{\Sigma(p_1,p_2,p_3)} \omega\ 
\]
as the \emph{symplectic area of the geodesic triangle} built on the vertices $(p_1,p_2,p_3)$. Loosely speaking, as $\omega$ is a closed form, a continuous variation of the surface does not change the value of the integral and so the formula defines a real-valued $3$-points function on $M$, which is in particular invariant under any holomorphic isometry of $M$.

However, there are two main obstacles to a rigorous definition of the symplectic area of a geodesic triangle : some coming from the \emph{Riemannian geometry} of $M$, some coming from the \emph{topology} of $M$.

In fact, to build the geodesic  triangle in a uniquely defined way, there should exists a \emph{unique minimizing geodesic segment} between any two points of $M$. Whether this is true for ``small'' triangles, it is in general not true globally, due to the existence of a \emph{cut locus} on $M$. Next,  when the manifold $M$ has a non-trivial topology, the integral may depend on the choice of the surface filling the triangle.

When $M$ is a Hermitian symmetric space of the non compact type, there exists a unique geodesic segment between two arbitrary points of $M$ (a consequence of the negative curvature) and the  topology of $M$ is trivial, as $M$ can be realized as the open ball of a complex vector space $\mathbb C^N$ for some Banach norm (a consequence of the \emph{ Harish Chandra embedding}). Hence the symplectic area of a geodesic triangle is well defined. An explicit expression for this 	area was obtained by Domic and Toledo (see \cite{dt}) for classical domains and in general for all domains by the two last present authors (see \cite{co2}) which amounts to
\begin{equation}
\int_{\Sigma(z_1,z_2,z_3)} \omega = -\big(\arg k(z_1,z_2)+\arg k(z_2,z_3)+\arg k(z_3,z_1)\big)\ ,
\end{equation}
 where $k(z,w)$ is a normalized version of the  \emph{Bergman kernel} of $M$ in its Harish Chandra realization.The three points function thus obtained defines a bounded cocycle, invariant under the group of holomorphic isometries of $M$, which turned out to be quite useful for various geometric problems (see e.g. \cite{dt, co2, w}).

The goal of this article is to clarify the definition of the symplectic area of a geodesic triangle  for a \emph{Hermitian symmetric space of compact type } and to find an analogue of the Domic-Toledo formula. For previous  work on this question, see \cite{b1, b2, bs, hm}. 

The geometry forces to restrict the definition to regular triangles, avoiding the cut-locus phenomena, and the topology suggests that one should consider rather the quantity
\[e^{\frac{i}{2} A(p_1,p_2,p_3)}
\]
as it can be shown to be independent of the surface chosen to fill the triangle,
see Theorem 4.2 for the precise formulation. The final result is a (singular) three-points function, which is a (multiplicative $U(1)$-valued) cocycle invariant under holomorphic isometries of the manifold. An appropriate version of the Domic-Toledo formula is then formulated and proved along lines similar to the original proof.
See Theorem 6.2 for the main formula.

Some results in this paper first appeared  in the thesis (\cite{mab}) defended by the first author at Aarhus University. The second author would like to thank Aarhus University for welcoming him during the time this paper was started.

{\it Acknowledgement:} Joseph Wolf has many contributions to Lie theory and geometry,
in particular that of Hermitian symmetric spaces; it is a
pleasure to let this paper be part of a tribute to him.

\section{The geometry of Hermitian symmetric spaces of compact type}

Any Hermitian symmetric space $M$ of compact type belongs to the class of \emph{symmetric $R$-spaces}, which means in particular that one can enlarge the group $U$ of holomorphic isometries of the space to a larger finite dimensional Lie group of diffeomorphisms of $M$. In the present case, this larger group turns out be be a \emph{complexification} $\mathbf G$ of $U$. So it is convenient to introduce the corresponding complex data, also  relevant for the dual  (non compact) Hermitian symmetric space $M^d$. We consider only \emph{irreducible} Hermitian symmetric spaces, which  correspond to \emph{ simple} Lie groups $G$ (or equivalently simple Lie algebras) of Hermitian type. Our main references for this section are \cite{h, s} and \cite{kor}.

We first introduce the infinitesimal data. So let $\mathfrak g_0$ be a simple real Lie algebra, choose a Cartan involution $\theta$ of $\mathfrak g_0$ and let $\mathfrak g_0 = \mathfrak k_0 \oplus \mathfrak p_0$ be the corresponding Cartan decomposition. The algebra is said to be \emph{of Hermitian type} if the center $\mathfrak z$ of $\mathfrak k_0$ is non trivial, in which case it can be shown to be one-dimensional. Then, up to a sign $\pm$, there exists a unique element $H_0$ in $\mathfrak z$ such that $(\ad_{\mathfrak p_0}H_0)^2 = -\id_{\mathfrak p_0}$. Then $J= \ad_{\mathfrak p_0}H_0$ defines a complex structure on $\mathfrak p_0$. Let $\mathfrak g, \mathfrak k, \mathfrak p$ be the complexifications of, respectively $ \mathfrak g_0, \mathfrak k_0, \mathfrak p_0$. 

Let $\mathfrak u = \mathfrak k_0\oplus i\mathfrak p_0$, and let $\tau$ be the conjugation of $\mathfrak g$ with respect to $\mathfrak u$. Then $\mathfrak u$ is a simple real Lie algebra  of compact type, and the pair $(\mathfrak u, \theta)$ where, abusing somewhat notation, $\theta$ is used for the restriction to $\mathfrak u$ of the complexification of $\theta$ is a simple \emph{symmetric Lie algebra of compact type}.

Let $\mathbf G$ be the simply connected Lie group with Lie algebra $\Lie(\mathbf G) = \mathfrak g$. The involution $\tau$ can be lifted to an involution of $\mathbf G$ (still denoted by $\tau$) and the fixed points set of $\tau$ is a maximal compact subgroup of  $U=\mathbf G^\tau$ of $\mathbf G$ with Lie algebra $\mathfrak u$. The complex automorphism $\theta$ can be lifted to an involution (still denoted by $\theta$) of $\mathbf G$ which preserves $U$. The set of fixed points of $\theta_{\vert U}$ is a subgroup $K_0$, with $\Lie(K_0) = \mathfrak k_0$. Form the quotient $M =U/K_0$.We refer to the element  $o=eK_0$ as the \emph{origin} of $M$. The tangent space $T_oM$ to $M$ at $o$ is identified with $i\mathfrak p_0$. The restriction of $\ad H_0$ to $i\mathfrak p_0$ induces a complex structure on $T_oM$. Let $B$ be the (complex)  Killing form of $\mathfrak g$. Then $q(X,Y)=-\frac{1}{2} B(X,\tau Y)$ is a positive-definite Hermitian form on $\mathfrak g$ and induces an inner product on $i \mathfrak p_0 \simeq T_o M$ which is invariant under the adjoint action of $K_0$. This allows to define a Hermitian metric on $M$, which realizes $M$ as a \emph{Hermitian symmetric space of compact type}.

As mentioned in the beginning of this section, the group $\mathbf G$ also acts on $M$. The involutions $\theta$ and $\tau$ lift to involutions of the group $\mathbf G$, still denoted by $\theta$ and $\tau$. Let $\mathfrak p_{\pm}$ be the eigenspaces of $\ad_{\mathfrak p} H_0$. Then $[ \mathfrak k, \mathfrak p_\pm]\subset \mathfrak p_\pm$ and $\mathfrak p^\pm$ turn out to be Abelian subspaces of $\mathfrak g$. Let $\mathbf P_\pm = \exp \mathfrak p_\pm$  be the corresponding Lie subgroups of $\mathbf G$, and let $\mathbf K= \mathbf G^\theta$ be the fixed points subgroup of $\theta$ in $\mathbf G$ with $Lie(\mathbf K) = \mathfrak k$. Then $\mathbf K \mathbf P_\pm$ are maximal parabolic subgroups of $\mathbf G$ and $ U\cap \mathbf K\mathbf P_- = U\cap K_0$, so that 
\[\mathbf G/\mathbf K\mathbf P_-\simeq U/K_0=M\ .
\]
The map 
$\Xi : \mathfrak p _+\ \longrightarrow M $ defined for $z\in \mathfrak p_+$ by
\[\Xi(z) = \exp(z) \mathbf K\mathbf P_-
\]
is a holomorphic diffeomorphism of $\mathfrak p_+$ onto a dense open subset of $M$, which will be used as a chart on $M$.

These two realizations of a Hermitian symmetric space of compact type are related through the \emph{Harish Chandra construction} of a Cartan subspace of $\mathfrak p_0$ from a Cartan subalgebra of $\mathfrak g$. Start with a maximal Abelian Lie subalgebra $\mathfrak h_0$ contained in $\mathfrak k_0$. Notice that $\mathfrak h_0\supset \mathfrak c\ni H_0$. Let $\mathfrak h$ be its complexification, which is a Cartan subalgebra of $\mathfrak g$. Let $\Delta= \Delta( \mathfrak g, \mathfrak h)$ be the corresponding root system, which is viewed as a subset of $(i\mathfrak h_0)^*$. If $\alpha$ is a root, then $\alpha(H_0) \in \{-i,0,i\}$. Choose a linear order on $(i\mathfrak h_0)^*$ such that
\[\alpha(H_0) = i\quad  \Longrightarrow \quad \alpha \succ 0\ .
\]
There is a corresponding partition of $\Delta$ 
\[\Delta = \Delta_c\cup \Delta_{nc}^+\cup \Delta_{nc}^-
\]
where $\Delta_c$ is the set of compact roots (those for which $\alpha(H_0) = 0$) and $\Delta_{nc}^+$ (resp. $\Delta_{nc}^- $) the set of positive (resp. negative) non-compact roots (those which satisfy $\alpha(H_0) = i$, resp. $\alpha(H_0) =-i$). For $\gamma$ any positive non-compact root, it is possible to choose elements $X_{\pm \gamma}\in \mathfrak g_{\pm \gamma}$ such that
\[X_\gamma-X_{-\gamma} \in \mathfrak u,\qquad i(X_\gamma+X_{-\gamma}) \in \mathfrak u
\]
normalized such that
\[[X_\gamma, X_{-\gamma}] = \frac{2}{\gamma(H_\gamma)} H_\gamma
\]
where $H_\gamma$ is the unique vector in $\mathfrak h$ satisfying $B(H_\gamma, H) = \gamma(H)$ for all $H\in \mathfrak h$.

\begin{proposition} There exists a set $\Gamma = \{ \gamma_1,\dots, \gamma_r\}$  of strongly orthogonal non-compact positive roots such that
\[\mathfrak a_0 = \sum_{j=1}^r \mathbb R (X_{\gamma_j}+X_{-\gamma_j})
\]
is a Cartan subspace of the pair $(\mathfrak g_0, \mathfrak k_0)$.
\end{proposition}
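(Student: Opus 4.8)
The plan is to produce $\Gamma$ by an explicit selection and then to verify, one by one, the three properties characterising a Cartan subspace of the pair $(\mathfrak g_0,\mathfrak k_0)$: that $\mathfrak a_0$ lies in $\mathfrak p_0$, that it is abelian, and --- the substantial point --- that it is maximal abelian in $\mathfrak p_0$. For the construction I would take $\Gamma$ to be any maximal family of pairwise strongly orthogonal roots inside $\Delta_{nc}^+$; such a family exists since $\Delta_{nc}^+$ is finite and non-empty (non-empty because $\mathfrak g_0$ is of Hermitian type, so $\mathfrak p_0\neq 0$), and it can be produced greedily by repeatedly adjoining the lowest root of $\Delta_{nc}^+$ that is still strongly orthogonal to those already chosen.

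Two of the three properties are immediate. The inclusion $\mathfrak a_0\subset\mathfrak p_0$ follows from the normalisation: each $\gamma_j$ is non-compact, so $X_{\pm\gamma_j}\in\mathfrak p$, and since $i(X_{\gamma_j}+X_{-\gamma_j})\in\mathfrak u$ while $\mathfrak u\cap\mathfrak p=i\mathfrak p_0$, we get $X_{\gamma_j}+X_{-\gamma_j}\in\mathfrak p_0$. That $\mathfrak a_0$ is abelian follows from strong orthogonality: for $i\neq j$ neither $\gamma_i+\gamma_j$ nor $\gamma_i-\gamma_j$ is a root (nor zero), so $[\mathfrak g_{\varepsilon\gamma_i},\mathfrak g_{\varepsilon'\gamma_j}]=0$ for all signs $\varepsilon,\varepsilon'$, whence $[X_{\gamma_i}+X_{-\gamma_i},X_{\gamma_j}+X_{-\gamma_j}]=0$; the diagonal terms are central trivially.

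For maximality I would show that the centralizer $\mathfrak z_{\mathfrak p_0}(\mathfrak a_0)$ of $\mathfrak a_0$ in $\mathfrak p_0$ equals $\mathfrak a_0$; equivalently, writing $\mathfrak a=\mathfrak a_0\otimes_{\mathbb R}\mathbb C$ and $\mathfrak z_{\mathfrak g}(\mathfrak a)$ for the centralizer in $\mathfrak g$, that $\mathfrak z_{\mathfrak g}(\mathfrak a)\cap\mathfrak p_0=\mathfrak a_0$. Here I would transport the problem through a partial Cayley transform. Put $Z_j=X_{\gamma_j}-X_{-\gamma_j}\in\mathfrak u\cap\mathfrak p$; the $Z_j$ mutually commute, each triple $(X_{\gamma_j},X_{-\gamma_j},H_{\gamma_j})$ spans an $\mathfrak{sl}_2$, and these copies commute. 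The element $c=\exp\bigl(\tfrac{\pi}{4}\sum_j(X_{-\gamma_j}-X_{\gamma_j})\bigr)$ lies in $U$ and satisfies $\theta(c)=c^{-1}$, $\tau(c)=c$, and a computation inside each $\mathfrak{sl}_2$-copy shows that $\Ad(c)$ carries $\mathfrak b:=\sum_j\mathbb C H_{\gamma_j}$ isomorphically onto $\mathfrak a$. Hence $\mathfrak z_{\mathfrak g}(\mathfrak a)=\Ad(c)\,\mathfrak z_{\mathfrak g}(\mathfrak b)$, and $\mathfrak z_{\mathfrak g}(\mathfrak b)=\mathfrak h\oplus\bigoplus_{\alpha\in\Delta^0}\mathfrak g_\alpha$, where $\Delta^0=\{\alpha\in\Delta:\langle\alpha,\gamma_j\rangle=0\ \text{for all }j\}$. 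The key structural input is that $\Delta^0$ contains no non-compact root: if $\alpha\in\Delta^0\cap\Delta_{nc}^+$, then $\alpha+\gamma_j$ would have $H_0$-eigenvalue $2i$ and so cannot be a root, and since $\langle\alpha,\gamma_j\rangle=0$ the $\gamma_j$-string through $\alpha$ is symmetric, forcing also $\alpha-\gamma_j\notin\Delta$; thus $\alpha$ would be strongly orthogonal to every $\gamma_j$, contradicting maximality of $\Gamma$. Therefore $\mathfrak z_{\mathfrak g}(\mathfrak b)\subset\mathfrak k$, and it remains to extract the part of $\Ad(c)\,\mathfrak z_{\mathfrak g}(\mathfrak b)$ lying in $\mathfrak p_0=\{X:\theta X=\tau X=-X\}$, which, using $\theta(c)=c^{-1}$ and $\tau(c)=c$, I expect to come out exactly $\mathfrak a_0$; since any abelian subspace of $\mathfrak p_0$ containing $\mathfrak a_0$ is contained in $\mathfrak z_{\mathfrak p_0}(\mathfrak a_0)$, this yields maximality.

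The main obstacle is precisely this last step. The part of $\mathfrak z_{\mathfrak g}(\mathfrak b)$ that commutes with all the $\mathfrak{sl}_2$-copies is fixed by $\Ad(c)$ and stays inside $\mathfrak k$, hence contributes nothing to $\mathfrak p_0$; the difficulty is that a compact root $\alpha\in\Delta^0$ with $\alpha\pm\gamma_j$ also roots gives a $\mathfrak g_\alpha$ that $\Ad(c)$ moves into the span of the non-compact root spaces $\mathfrak g_{\alpha\pm\gamma_j}\subset\mathfrak p$, and one must verify that no such contribution survives into $\mathfrak p_0$ beyond what $\mathfrak a_0$ already provides. This is a finite root-system bookkeeping in which the Hermitian hypothesis is used once more, through the eigenvalue pattern of $\ad H_0$ on $\mathfrak p_\pm$, and it is carried out in the classical references \cite{h, kor}; I would follow that route.
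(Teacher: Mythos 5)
The paper does not actually prove this proposition: it is the classical Harish--Chandra construction, quoted as known with the section's references \cite{h}, \cite{s}, \cite{kor}, so there is no in-paper argument to compare you with. The parts of your proposal that are written out are correct: the containment $\mathfrak a_0\subset\mathfrak p_0$ via $i(X_{\gamma_j}+X_{-\gamma_j})\in\mathfrak u\cap\mathfrak p=i\mathfrak p_0$, commutativity from strong orthogonality, the Cayley-transform reduction of maximality to computing $\Ad(c)\,\mathfrak z_{\mathfrak g}(\mathfrak b)\cap\mathfrak p_0$, and the lemma that a noncompact root orthogonal to every $\gamma_j$ would be strongly orthogonal to all of them.

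The genuine gap is the step you flag and then outsource to the references, and it is not routine bookkeeping: one must exclude a \emph{compact} root $\alpha$ with $\langle\alpha,\gamma_j\rangle=0$ for all $j$ but $\alpha\pm\gamma_{j_0}\in\Delta$ for some $j_0$. For such an $\alpha$ the vector $X_\alpha$, being the weight-zero vector of a three-term $\gamma_{j_0}$-string, is sent by $\Ad(c)$ entirely into $\mathfrak g_{\alpha+\gamma_{j_0}}\oplus\mathfrak g_{\alpha-\gamma_{j_0}}\subset\mathfrak p$ while still centralizing $\mathfrak a$; since $\mathfrak z_{\mathfrak g}(\mathfrak a)\cap\mathfrak p$ is stable under the conjugation of $\mathfrak g$ with respect to $\mathfrak g_0$, this would produce an element of $\mathfrak z_{\mathfrak p_0}(\mathfrak a_0)$ outside $\mathfrak a_0$. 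Thus the existence of such an $\alpha$ is \emph{equivalent} to the failure of maximality, and nothing you establish rules it out. It is also precisely here that your opening hedge (``any maximal strongly orthogonal family would do'') becomes unsafe, because the exclusion uses the greedy lowest-root choice rather than mere maximality: if such an $\alpha$ existed, then both $\gamma_{j_0}+\alpha$ and $\gamma_{j_0}-\alpha$ would be noncompact positive roots (string symmetry again), each orthogonal to every $\gamma_k$ with $k\ne j_0$ and hence, by your own argument, strongly orthogonal to $\gamma_1,\dots,\gamma_{j_0-1}$; whichever of the two is $\prec\gamma_{j_0}$ then contradicts the choice of $\gamma_{j_0}$ as the \emph{lowest} noncompact positive root strongly orthogonal to the preceding ones. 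Adding this paragraph closes your argument; without it, the only nontrivial assertion of the proposition is still being quoted from the literature rather than proved.
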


It will be useful to also introduce
\[\mathfrak a_+= \sum_{j=1}^r \mathbb C X_{\gamma_j} \subset \mathfrak p_+\ .
\]
As $X \longmapsto \frac{1}{2}(X-iJX)$ is an $\Ad K_0$-covariant real isomorphism of $\mathfrak p_0$ onto $\mathfrak p_+$, it follows that
\begin{equation}\label{aKp}
\cup_{k\in K_0} \Ad k\, (\mathfrak a_+) = \mathfrak p_+\ .
\end{equation}

\section{Fine Riemannian geometry of a CHSS}

\subsection{ Helgason spheres}

Let $N$ be an irreducible Riemannian (not necessarily Hermitian) symmetric space of compact  type. Let $\kappa>0$ be the maximum of the sectional curvatures of $N$. Then a \emph{Helgason sphere} is a totally geodesic submanifold of $N$, of constant curvature $\kappa$ and of maximal dimension among submanifolds of this type. The Helgason spheres are conjugate under the isometry group of $M$. See \cite{hsphere} for more details.

When $M$ is a compact irreducible Hermitian symmetric space, a Helgason sphere turns out to be  a complex submanifold of $M$, of complex dimension 1, and of constant curvature equal to the maximum of the holomorphic sectional curvature $\kappa$ and bihomorphically isomorphic to the Riemann sphere $\mathbb C\mathbb P_1$.

Let us recall the construction of a Helgason sphere in this special case. Let $\mathfrak a_0$ be a Cartan subspace of $\mathfrak p_0$ and let $r=\dim \mathfrak a_0 =\text {rank } M$. Clearly, the space $i\mathfrak a_0$ is a Cartan subspace for the pair $(\mathfrak u, i\mathfrak p_0)$. The system $\Sigma$ of restricted roots of the pair $(\mathfrak u, i\mathfrak a_0)$ is known to be of type $C_r$ or $BC_r$. Hence there a basis of $i\mathfrak a_0^*$, say $\Gamma=\{\gamma_1,\dots \gamma_r\}$, where $r$ is the \emph{rank} of $M$ such that  
 the roots are given by
\[\pm\gamma_k, 1\leq k \leq r,\qquad \text{with multiplicity 1 } \]
\[\pm\frac{1}{2}\gamma_k\pm\frac{1}{2}\gamma_l,\ 1\leq k<l\leq r,\qquad \text{with multiplicity } a
\]
\[\text{ and possibly } \pm\frac{1}{2} \gamma_k, 1\leq k\leq r, \qquad \text{with even multiplicity } 2b \ . 
\]
Now extend the Cartan subspace $i\mathfrak a_0$ to a Cartan subalgebra of $\mathfrak u$ , say $\mathfrak h_0= \mathfrak b_0+i\mathfrak a_0$ with $\mathfrak b_0\subset \mathfrak k_0$. Let $\mathfrak h$ be its complexification and let  $\Delta = \Delta(\mathfrak g, \mathfrak h)$ be the associated root system. For $1\leq k\leq r$, $\gamma_k$ is of multiplicity 1, hence is the restriction to $i\mathfrak a_0$ of a unique root $\widetilde \gamma_k\in \Delta$ . The Lie algebra $\mathfrak g^{(k)}$ generated by the root spaces $\mathfrak g_{\pm {\widetilde \gamma}_k}$ is isomorphic to ${\mathfrak sl}(2,\mathbb C)$, stable by $\tau$ and $\mathfrak{u}^{(k)}={\mathfrak g^{(k)}}^\tau$ is isomorphic to $
\mathfrak {su}(2)$. Let $\mathbf G^{(k)}$ be the closed analytic subgroup of $\mathbf G$ with Lie algebra $\mathfrak g_k$. Then  $U^{(k)}={\big(\mathbf G^{(k)}\big)}^\tau$ is a  maximal compact subgroup of $\mathbf G^{(k)}$.  The orbit of $o$ under $\mathbf G^{(k)}$ coincides with its orbit under $U^{(k)}$ and is isomorphic to a Riemann sphere $\mathbb C \mathbb P_1$. It is in fact is a Helgason sphere (see \cite{hsphere,tak}).
 
There is another way to obtain a Helgason sphere, which we will use in Section 3. This time, we  choose a Cartan subalgebra $\mathfrak h_0 \subset \mathfrak k_0$, and we already noticed that its complexification $\mathfrak h$ is a Cartan subalgebra of $\mathfrak g$. Let $\Delta$ the corresponding  root system, and let $\Pi$ be the set simple roots with respect to some choice of positive roots. Among the simple roots, one and only one is non-compact, say $\alpha_1$. Let $\mathfrak g_{\pm\alpha_1} $ be the corresponding root spaces. Then, as $\tau(iH) = -iH$, $\tau(\mathfrak g_{\alpha_1}) = \mathfrak g_{-\alpha_1}$. The Lie algebra generated by $\mathfrak g_{\alpha_1}$ and $\mathfrak g_{-\alpha_1}$ is isomorphic to $\mathfrak {sl}(2,\mathbb C)$ and stable by $\tau$. As in the previous construction, the corresponding analytic subgroup $\mathbf G^{(\alpha_1)}$ of $\mathbf G$ is closed and stable by $\tau$. The orbit of $o$ under $\mathbf G^{(\alpha_1)}$ is again a Helgason sphere. It can be shown by using the Harish Chandra construction of a Cartan subalgebra in $\mathfrak p_0$ from $\mathfrak h$, followed by a \emph{Cayley transform} (see \cite{kor} III.2). After these operations, we are back to the first construction and the conclusion follows. This last construction has a more general version, valid for any symmetric $R$-space, presented  in \cite{tak}.

\subsection{The polysphere embedding}

Going back to notations of the beginning of subsection 2.1,  observe that as the roots $\gamma_k, 1\leq k\leq r$ are mutually strongly orthogonal, the subalgebras $\mathfrak g^{(k)}$ mutually commute to each other. Form
\begin{equation}\label{Gamma}
\mathfrak g(\Gamma) = \bigoplus_{k=1}^r \mathfrak g^{(k)},\qquad \mathfrak u(\Gamma) = \bigoplus_{k=1}^r \mathfrak u^{(k)}\ ,
\end{equation}
and let $\mathbf G(\Gamma)$ and $U(\Gamma)$ be the corresponding analytic subgroups of $\mathbf G$. 

\begin{proposition} [polysphere embedding]\label{poly}The orbit of $o$ under the action of $U(\Gamma)$ is a complex totally geodesic submanifold $\mathbf S(\Gamma)$ of $M$, isomorphic to $S^{(1)}\times \dots \times S^{(k)}\times \dots \times S^{(r)}$, where for $1\leq k\leq r$, $S^{(k)}$ is a Helgason sphere, isomorphic to $\mathbb C \mathbb P_1$. 
\end{proposition}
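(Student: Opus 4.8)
The plan is to realise $\mathbf S(\Gamma)$ as the complete totally geodesic submanifold through $o$ attached to an explicit Lie triple system, and then to promote the evident infinitesimal splitting of its tangent space into a global biholomorphism with $S^{(1)}\times\dots\times S^{(r)}$ by a density argument carried out in the Harish--Chandra chart $\Xi$.

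First I would set $\mathfrak m^{(k)} := \mathfrak u^{(k)}\cap i\mathfrak p_0 = T_oS^{(k)}$ for each $k$. Since $\theta(X_{\pm\gamma_k})=-X_{\pm\gamma_k}$, this is the real plane spanned by $X_{\gamma_k}-X_{-\gamma_k}$ and $i(X_{\gamma_k}+X_{-\gamma_k})$ inside $i\mathfrak p_0\simeq T_oM$, and because $\ad H_0$ sends $X_{\pm\gamma_k}$ to $\pm i X_{\pm\gamma_k}$ it is a complex line for $J=\ad H_0|_{i\mathfrak p_0}$. Mutual strong orthogonality of the $\gamma_k$ makes the root vectors $X_{\pm\gamma_j}$, $X_{\pm\gamma_k}$ pairwise $q$-orthogonal for $j\neq k$, so $\mathfrak m := \bigoplus_{k=1}^r\mathfrak m^{(k)}$ is a genuine direct sum, $J$-stable, of complex dimension $r$. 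It is a Lie triple system: each $\mathfrak m^{(k)}$ is the $(-1)$-eigenspace of $\theta$ in $\mathfrak u^{(k)}$ and hence a Lie triple system, while $[\mathfrak m^{(j)},\mathfrak m^{(k)}]\subset[\mathfrak u^{(j)},\mathfrak u^{(k)}]=\{0\}$ for $j\neq k$ by the commutation of the $\mathfrak g^{(k)}$. Consequently $\exp_o(\mathfrak m)$ is a complex, totally geodesic, compact submanifold of $M$ with $T_o\exp_o(\mathfrak m)=\mathfrak m$. To identify it with the orbit I would use that $\mathfrak{su}(2)$ is simple, so $\mathfrak m^{(k)}$ generates $\mathfrak u^{(k)}$, whence $\mathfrak m+[\mathfrak m,\mathfrak m]=\bigoplus_k\big(\mathfrak m^{(k)}+[\mathfrak m^{(k)},\mathfrak m^{(k)}]\big)=\bigoplus_k\mathfrak u^{(k)}=\mathfrak u(\Gamma)$; therefore $\exp_o(\mathfrak m)=U(\Gamma)\cdot o=\mathbf S(\Gamma)$, and moreover $U(\Gamma)=U^{(1)}\cdots U^{(r)}$ as a product of commuting connected subgroups.

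Next I would study the $\big(\prod_kU^{(k)}\big)$-equivariant map
\[
\Psi:\ S^{(1)}\times\dots\times S^{(r)}\longrightarrow \mathbf S(\Gamma),\qquad \big([u_1],\dots,[u_r]\big)\longmapsto u_1\cdots u_r\cdot o,
\]
which is well defined (the $U^{(k)}$ commute and $K_0$ is a subgroup), surjective, and whose differential at the base point is the identity of $\mathfrak m$ under the identification $T_{\bar o}\big(\prod_kS^{(k)}\big)=\bigoplus_kT_oS^{(k)}=\mathfrak m$ from the previous step. By equivariance $\Psi$ is a local diffeomorphism everywhere, so — the source being compact and $\mathbf S(\Gamma)$ connected — a finite covering map, and the remaining point is that its degree is $1$. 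For this I would pass to $\Xi$: the group $\mathbf G^{(k)}$ also has orbit $S^{(k)}\cong\mathbb C\mathbb P_1$ through $o$, and it contains the abelian unipotent subgroup $\exp(\mathbb C X_{\gamma_k})\subset\mathbf P_+$, whose orbit of $o$ is $S^{(k)}$ minus a point (the fixed point of this flow, which is not $o$ because $\exp(zX_{\gamma_k})\cdot o=\Xi(zX_{\gamma_k})\neq o$ for $z\neq 0$ by injectivity of $\Xi$); since $\mathfrak p_+$ is abelian one computes
\[
\Psi\big([\exp z_1X_{\gamma_1}],\dots,[\exp z_rX_{\gamma_r}]\big) = \exp\!\Big(\textstyle\sum_kz_kX_{\gamma_k}\Big)\mathbf K\mathbf P_- = \Xi\!\Big(\textstyle\sum_kz_kX_{\gamma_k}\Big)\ .
\]
Thus $\Psi$ carries the dense open subset $\Omega\subset S^{(1)}\times\dots\times S^{(r)}$ (the product of these big cells) biholomorphically onto $\Xi(\mathfrak a_+)\cong\mathbb C^r$, using injectivity of $\Xi$ on $\mathfrak p_+\supset\mathfrak a_+$. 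Since $\Xi(\mathfrak a_+)$ is simply connected, the restricted covering $\Psi^{-1}\big(\Xi(\mathfrak a_+)\big)\to\Xi(\mathfrak a_+)$ is trivial, $\Omega$ being one of its sheets; a second sheet would be a non-empty open set disjoint from the dense set $\Omega$, which is impossible. Hence $\Psi$ is a biholomorphism $S^{(1)}\times\dots\times S^{(r)}\xrightarrow{\sim}\mathbf S(\Gamma)$, and the proposition follows. (Alternatively, one may invoke that $\mathbf S(\Gamma)$, being a Hermitian symmetric space of compact type, is simply connected, so the covering $\Psi$ is automatically trivial.)

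The only genuinely global point is this last one, and it is where care is needed: one must make the picture of the ``big cells'' precise and check the displayed identity $\Psi(\dots)=\Xi(\sum_kz_kX_{\gamma_k})$ exactly, which is where the interplay between the polysphere group $\mathbf G(\Gamma)$ and the parabolic $\mathbf K\mathbf P_-$ enters; the Riemannian geometry is otherwise elementary. The Lie-triple-system bookkeeping of the first two paragraphs, by contrast, is routine once the commutation of the $\mathfrak g^{(k)}$ and the structure of $\mathfrak{su}(2)$ are in hand.
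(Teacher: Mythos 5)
Your argument is correct, and it is worth noting that the paper itself gives no proof of this proposition: it simply refers to Wolf's article \cite{wo} on the fine structure of Hermitian symmetric spaces. Your proposal therefore supplies a self-contained derivation rather than a variant of an argument in the text. Both halves are sound: the identification of $\mathbf S(\Gamma)$ with the complete totally geodesic submanifold attached to the Lie triple system $\mathfrak m=\bigoplus_{k}\mathfrak m^{(k)}$ (so that $\mathfrak m+[\mathfrak m,\mathfrak m]=\mathfrak u(\Gamma)$ and $\exp_o(\mathfrak m)=U(\Gamma)\cdot o$), and the degree-one argument for the covering $\Psi$ via the dense product of big cells mapped bijectively onto $\Xi(\mathfrak a_+)\simeq\mathbb C^r$. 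Two points deserve an explicit word in a write-up: first, that $\mathbf S(\Gamma)$ is a \emph{complex} submanifold follows from the $J$-invariance of $\mathfrak m$ combined with the parallelism of $J$ along a totally geodesic submanifold, not from $J$-invariance of the tangent space at $o$ alone; second, the displayed identity $\Psi\big([\exp z_1X_{\gamma_1}],\dots\big)=\Xi\big(\sum_k z_kX_{\gamma_k}\big)$ requires replacing each $\exp(z_kX_{\gamma_k})$ by an element of $U^{(k)}$ that differs from it by an element of $\mathbf G^{(k)}$ stabilizing $o$, and pushing those correction factors to the right past the other terms using the commutation of the $\mathbf G^{(j)}$ --- you flag this, and it is exactly the point to check. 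The alternative ending (simple connectivity of $\mathbf S(\Gamma)$) is less satisfactory as stated, since one must first verify that $\mathbf S(\Gamma)$ is a Hermitian symmetric space of compact type in the sense required by Helgason's theorem; the dense-cell argument is preferable precisely because it is self-contained.
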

See \cite{wo}. Let 
\[\mathfrak a_0^{(k)} = \mathfrak a_0\cap \mathfrak g^{(k)},\qquad\mathfrak a_+^{(k)} = \mathfrak a_+\cap \mathfrak g^{(k)}\ ,
\]
and observe that $\displaystyle \mathfrak a_0 = \bigoplus_{k=1}^r \mathfrak a_0^{(k)}$, and
\[ \qquad T(\Gamma) = \exp(i\mathfrak a_0) = \prod_{k=1}^r \exp\big(i\mathfrak a_0^{(k)}\big)\]is a maximal torus $T(\Gamma)$ in $M$.
For further reference, let   $o=(o_1,\dots, o_r)$ where, for $1\leq k\leq r$,  $o_k$ is the origin point in $S^{(k)}$.

\begin{corollary}\label{geodesicpolysphere}
 Let $\gamma : [0,1] \longrightarrow M$ be a geodesic curve starting from $\gamma(0) = p$. Then there exists an element $u\in U$ such that $u(p)=o$ and $u\circ \gamma$
is  contained in $\mathbf S(\Gamma)$.
\end{corollary}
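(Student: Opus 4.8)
The plan is to use the homogeneity of $M$ under $U$ to move the base point $p$ to the origin $o$, and then to invoke the conjugacy of Cartan subspaces under $\Ad K_0$ to push the geodesic into the maximal torus $T(\Gamma)$, which lies inside the polysphere $\mathbf S(\Gamma)$.

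First I would use that $M=U/K_0$ is $U$-homogeneous to pick $u_0\in U$ with $u_0(p)=o$ and replace $\gamma$ by $u_0\circ\gamma$, reducing to the case $p=\gamma(0)=o$. With the metric induced by $q$ the space $M$ is the Riemannian symmetric space attached to the compact symmetric pair $(\mathfrak u,\mathfrak k_0)$, whose isotropy decomposition at $o$ is $\mathfrak u=\mathfrak k_0\oplus i\mathfrak p_0$ with $T_oM\simeq i\mathfrak p_0$; hence every geodesic through $o$ is of the form $t\mapsto\exp(tX)\cdot o$ for a unique $X\in i\mathfrak p_0$, and I write $\gamma(t)=\exp(tX)\cdot o$.

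Next, $i\mathfrak a_0$ is a maximal abelian subspace of $i\mathfrak p_0$ for the pair $(\mathfrak u,\mathfrak k_0)$, so the conjugacy theorem for Cartan subspaces provides $k\in K_0$ with $Y:=\Ad(k^{-1})X\in i\mathfrak a_0$ (alternatively this is a consequence of \eqref{aKp} and the $\Ad K_0$-covariance of the identification $\mathfrak p_0\simeq\mathfrak p_+$). Since $k^{-1}\in K_0$ fixes $o$, the isometry $k^{-1}$ transforms $\gamma$ into
\[
t\longmapsto k^{-1}\exp(tX)\cdot o=\exp\big(t\,\Ad(k^{-1})X\big)\cdot o=\exp(tY)\cdot o .
\]
As $i\mathfrak a_0=\bigoplus_{j=1}^r i\mathfrak a_0^{(j)}$ with $i\mathfrak a_0^{(j)}\subset\mathfrak u^{(j)}\subset\mathfrak u(\Gamma)$, we have $\exp(tY)\in U(\Gamma)$ for all $t$, so this geodesic stays inside $T(\Gamma)=\exp(i\mathfrak a_0)\cdot o\subset\mathbf S(\Gamma)$.

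To conclude I would set $u=k^{-1}u_0\in U$, so that $u(p)=k^{-1}(o)=o$ and $u\circ\gamma=k^{-1}\circ(u_0\circ\gamma)$ is contained in $T(\Gamma)\subset\mathbf S(\Gamma)$, as required. The only slightly delicate points are the two classical inputs used above — the description of geodesics through $o$ as one-parameter orbits $\exp(tX)\cdot o$ with $X\in i\mathfrak p_0$, and the $\Ad K_0$-conjugacy of maximal abelian subspaces of $i\mathfrak p_0$ — so that this is really the main (and essentially only) obstacle; once they are in place the statement drops out, in fact with $\mathbf S(\Gamma)$ replaced by the smaller torus $T(\Gamma)$.
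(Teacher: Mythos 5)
Your proof is correct and follows the same route as the paper: use transitivity of $U$ to move $p$ to $o$, then conjugate the initial velocity into $i\mathfrak a_0$ by an element of $K_0$ so the geodesic lands in $T(\Gamma)\subset\mathbf S(\Gamma)$. You merely make explicit the two classical facts (geodesics through $o$ as orbits $\exp(tX)\cdot o$ and $\Ad K_0$-conjugacy of Cartan subspaces) that the paper's one-line proof takes for granted.
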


\begin{proof}
First, as $U$ is transitive on $M$, there is an element of $U$ which maps $p$ to the origin $o$. Now any geodesic curve through $o$ can be mapped by an element of $K_0$ to a geodesic curve contained in $T(\Gamma)\subset \mathbf S(\Gamma)$. The lemma follows by composing the two elements of $U$.
\end{proof}

\subsection{First conjugate locus}

Let $N$ be a compact Riemannian manifold. For  $p\in N$ let $T_pN$ be the tangent space to $N$ at $p$, and let $\Exp_p : T_pN\longrightarrow N$ be the exponential map  with source $p$. The \emph{tangent 
conjugate locus} of $p$ is the space $C_p\subset T_pN$ defined by
\[X\in C_p \Longleftrightarrow d\Exp_p(X) \text{ is singular} \ .
\]
The \emph{tangent first conjugate locus} of $p$ is  the subset $C_p^{(1)}$ defined as
\[C_p^{(1)} = \{ x\in C_p, tX\notin C_p, \text{ for any } t, 0\leq t<1\}\ .
\]
The \emph{conjugate locus} $\mathbf C_p$  (resp. \emph{first conjugate locus} $\mathbf C_p^{(1)})$ of $p$ is the image under the exponential map with source at $p$ of the tangent conjugate locus (resp. first tangent conjugate locus).

For an irreducible Riemannian symmetric space of compact  type, it is enough to determine the tangent conjugate locus at the origin $o$. Its description is known (see \cite{h} Ch. VII, Prop. 3.1). In our situation, with the notation introduced above, first

\[C_o = \cup_{k\in K} \Ad k\,(C_o\cap  i\mathfrak a_0)\ .
\] 
Further,  
\[X\in C_o\cap \mathfrak i\mathfrak a_0 \quad \Longleftrightarrow\quad \exists \alpha\in \Sigma, \alpha(X) \in i\pi\left(\mathbb Z\smallsetminus\{0\}\right)\ .
\]

\begin{proposition}\label{FTCL}
 Let $M$ be a Hermitian symmetric space of compact type. The first tangent conjugate locus at the origin $o$ is given by
\[C_o^{(1)} = \cup_{k\in K} \Ad k\, (C_o^{(1)}\cap i\mathfrak a_0)\ ,\qquad C_o^{(1)}\cap i\mathfrak a_0 = \{ X\in i\mathfrak a_0, \max_{1\leq j\leq r} \vert\gamma_j(X)\vert = \pi\}\ .
\]

\end{proposition}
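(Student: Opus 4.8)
The plan is to lean on the two facts recalled just before the statement---that $C_o=\bigcup_{k\in K_0}\Ad k\,(C_o\cap i\mathfrak a_0)$ and that, inside $i\mathfrak a_0$, membership in $C_o$ is detected by the restricted roots---so as to reduce the whole question to an elementary computation inside the abelian subspace $i\mathfrak a_0$. First I would note that each $k\in K_0$ acts on $M$ as a holomorphic isometry fixing $o$ with differential $\Ad k$ at $o$, so $\Exp_o\circ\Ad k=k\circ\Exp_o$ and hence $\Ad k$ preserves $C_o$; since the condition defining $C_o^{(1)}$ only refers to the ray $t\mapsto tX$, it is likewise preserved, so $\Ad k(C_o^{(1)})=C_o^{(1)}$. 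Together with the classical $\Ad K_0$-conjugacy of Cartan subspaces of $\mathfrak p_0$, namely $\mathfrak p_0=\bigcup_{k\in K_0}\Ad k(\mathfrak a_0)$, multiplied by $i$ (which commutes with $\Ad K_0$), this gives at once $C_o^{(1)}=\bigcup_{k\in K_0}\Ad k\,(C_o^{(1)}\cap i\mathfrak a_0)$. It therefore remains only to identify $C_o^{(1)}\cap i\mathfrak a_0$; and since $tX\in i\mathfrak a_0$ whenever $X\in i\mathfrak a_0$, this stays entirely within $i\mathfrak a_0$, governed by the recalled equivalence $tX\in C_o\iff t\,\alpha(X)\in i\pi(\mathbb Z\smallsetminus\{0\})$ for some restricted root $\alpha$.

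Fix $X\in i\mathfrak a_0\smallsetminus\{0\}$. Because $i\mathfrak a_0\subset\mathfrak u$ with $\mathfrak u$ of compact type, every restricted root is imaginary-valued on $i\mathfrak a_0$; write $\alpha(X)=i\,s_\alpha$ with $s_\alpha\in\mathbb R$. For $\alpha$ with $s_\alpha\neq0$ one has $\{t>0:\ t\,\alpha(X)\in i\pi(\mathbb Z\smallsetminus\{0\})\}=\tfrac{\pi}{|s_\alpha|}\{1,2,3,\dots\}$, and the set is empty when $s_\alpha=0$. As the $\gamma_j$ form a basis of $(i\mathfrak a_0)^*$, some $s_\alpha$ is nonzero, so $\{t>0:\ tX\in C_o\}$ is a nonempty finite union of such progressions and has a smallest element
\[t_{\min}(X)\ =\ \min\{t>0:\ tX\in C_o\}\ =\ \frac{\pi}{\displaystyle\max_{\alpha}|\alpha(X)|}\ .\]
By the very definition of the first tangent conjugate locus, $X\in C_o^{(1)}$ if and only if $1$ is the least positive scalar for which $tX\in C_o$, i.e. $t_{\min}(X)=1$, i.e. $\max_{\alpha}|\alpha(X)|=\pi$.

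The last step, which is the only genuine point, is to compute $\max_{\alpha}|\alpha(X)|$. Every root of the $C_r$ (or $BC_r$) system is $\alpha=\sum_k c_k\gamma_k$ with coefficient vector one of $\pm e_k$, $\pm\tfrac12 e_k\pm\tfrac12 e_l$, $\pm\tfrac12 e_k$, so $\sum_k|c_k|\le1$ in every case; hence
\[|\alpha(X)|\ \le\ \sum_k|c_k|\,|\gamma_k(X)|\ \le\ \Big(\sum_k|c_k|\Big)\max_{1\le j\le r}|\gamma_j(X)|\ \le\ \max_{1\le j\le r}|\gamma_j(X)|\ ,\]
with equality for $\alpha=\pm\gamma_{j_0}$, $j_0$ an index realizing the maximum. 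Thus $\max_{\alpha}|\alpha(X)|=\max_{1\le j\le r}|\gamma_j(X)|$, and combined with the preceding paragraph this yields $C_o^{(1)}\cap i\mathfrak a_0=\{X\in i\mathfrak a_0:\ \max_{1\le j\le r}|\gamma_j(X)|=\pi\}$, which finishes the proof. The main obstacle is conceptual rather than computational: one must keep the normalizations straight so that the $\gamma_j$ are genuinely imaginary-valued on $i\mathfrak a_0$ and $|\gamma_j(X)|$ is the modulus occurring above, and then recognize that, along every ray, it is the multiplicity-one ``long'' roots $\pm\gamma_j$ that reach $i\pi\mathbb Z$ first---the simple triangle-inequality bound that renders the short roots $\pm\tfrac12\gamma_k$ and $\pm\tfrac12\gamma_k\pm\tfrac12\gamma_l$ irrelevant to the first conjugate locus.
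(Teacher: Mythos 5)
Your proof is correct and follows essentially the same route as the paper: the reduction to $i\mathfrak a_0$ by $\Ad K_0$-equivariance, the identification of $C_o^{(1)}\cap i\mathfrak a_0$ via the first time the ray $t\mapsto tX$ meets $C_o$, and the key inequality $\vert\alpha(X)\vert\le\max_j\vert\gamma_j(X)\vert$ coming from the $C_r$/$BC_r$ structure of the restricted root system (with equality at $\alpha=\pm\gamma_{j_0}$). The paper's proof is just a terser version of the same argument; your write-up supplies the details it leaves implicit.
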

\begin{proof}
For any restricted root $\alpha$
\[\vert\alpha(X)\vert\leq \max_{1\leq j\leq r} \vert \gamma_j(X)\vert,
\]
so that 
\[\max_{\alpha\in \Sigma} \vert \alpha(X)\vert= \max_{1\leq j\leq r} \vert \gamma_j(X)\vert\ ,
\]
and the proposition follows.
\end{proof}
We may now combine this result with the polysphere embedding (cf. Proposition \ref{poly}).

\begin{proposition}\label{FCL}
 Let $q \in M$. Then $q$ belongs to the first conjugate locus of $o$ if and only if there exists a polysphere $\mathbf S(\Gamma)\simeq S^{(1)}\times \dots \times S^{(r)}$ such that $q=(q_1,\dots q_r)$ and there exists some $j, 1\leq j\leq r$ such that $q_j$ is antipodal to $o_j$.
\end{proposition}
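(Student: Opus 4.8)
The plan is to transport the tangent-space description of Proposition~\ref{FTCL} to $M$ along $\Exp_o$, using the $K$-covariance $\Exp_o\circ\Ad k=k\circ\Exp_o$ for $k$ in the isotropy group $K$ of $o$, together with the polysphere embedding $\mathbf S(\Gamma)\simeq S^{(1)}\times\dots\times S^{(r)}$ of Proposition~\ref{poly}. Two facts about a single Helgason sphere $S^{(k)}\simeq\mathbb C\mathbb P_1$ --- a rank-one symmetric space whose Cartan subspace $i\mathfrak a_0^{(k)}$ is one-dimensional --- are needed: (i) after an isotropy rotation at $o_k$, every point of $S^{(k)}$ is $\Exp_{o_k}(X_k)$ for some $X_k\in i\mathfrak a_0^{(k)}$; and (ii) with the normalization of $\gamma_k$ fixed in Section~1, such a point is antipodal to $o_k$ precisely when $|\gamma_k(X_k)|=\pi$, and one may always choose $X_k$ with $|\gamma_k(X_k)|\le\pi$ (use a minimizing geodesic). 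I will also use that $\mathbf S(\Gamma)$ carries the product metric and is totally geodesic, so that $\Exp_o$ on $T_o\mathbf S(\Gamma)=\bigoplus_k T_{o_k}S^{(k)}$ is the product of the maps $\Exp_{o_k}$, and the identity $\gamma_k(X)=\gamma_k(X_k)$ for $X=\sum_l X_l$ with $X_l\in i\mathfrak a_0^{(l)}$, which follows from strong orthogonality of the $\gamma_l$.

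\emph{If $q\in\mathbf C_o^{(1)}$.} Write $q=\Exp_o(Y)$ with $Y\in C_o^{(1)}$. By Proposition~\ref{FTCL}, $Y=\Ad k\,X$ with $k\in K$ and $X=\sum_k X_k\in C_o^{(1)}\cap i\mathfrak a_0$, $X_k\in i\mathfrak a_0^{(k)}$; using the identity above, $\max_{1\le j\le r}|\gamma_j(X_j)|=\pi$. Then $\Exp_o(X)=(\Exp_{o_1}(X_1),\dots,\Exp_{o_r}(X_r))\in\mathbf S(\Gamma)$, and for an index $j$ realizing the maximum the $j$-th coordinate $\Exp_{o_j}(X_j)$ is antipodal to $o_j$ by (ii). Applying the isometry $k$, which fixes $o$, the point $q=k\cdot\Exp_o(X)$ lies in $k\cdot\mathbf S(\Gamma)$, the orbit of $o$ under $kU(\Gamma)k^{-1}$, again a polysphere; since $k$ carries $S^{(j)}$ isometrically onto its image, it sends the antipodal pair $(o_j,\Exp_{o_j}(X_j))$ to an antipodal pair, which is what had to be shown.

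\emph{Conversely,} suppose $q=(q_1,\dots,q_r)\in\mathbf S(\Gamma)$ with $q_j$ antipodal to $o_j$. Choose $v_k\in T_{o_k}S^{(k)}$ with $\Exp_{o_k}(v_k)=q_k$ and $|v_k|=d(o_k,q_k)$, and rotate $v=\sum_k v_k$ into $i\mathfrak a_0$ by an element $\kappa$ of the isotropy group of $o$ inside $\mathbf S(\Gamma)$, giving $X=\Ad\kappa\,v=\sum_k X_k\in i\mathfrak a_0$ with $X_k\in i\mathfrak a_0^{(k)}$ and $q=\kappa^{-1}\Exp_o(X)$. By (ii) we have $|\gamma_k(X_k)|\le\pi$ for every $k$, with equality for $k=j$, so $\max_k|\gamma_k(X)|=\pi$; by the computation in the proof of Proposition~\ref{FTCL} this yields $\max_{\alpha\in\Sigma}|\alpha(X)|=\pi$, i.e.\ $X\in C_o^{(1)}\cap i\mathfrak a_0$. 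Hence $\Exp_o(X)\in\mathbf C_o^{(1)}$, and applying the isometry $\kappa^{-1}$ fixing $o$ gives $q\in\mathbf C_o^{(1)}$.

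The main obstacle is step (ii): matching the root normalization of $\gamma_k$ from Section~1 with the metric geometry of the Helgason sphere, so that $|\gamma_k(X_k)|=\pi$ is exactly the antipodal condition. Concretely this requires checking that $\mathbf S(\Gamma)$ is a metric product of round spheres $S^{(k)}$ with the correct scaling and that the first conjugate point of $\mathbb C\mathbb P_1$ is its antipode; once this, and the product formula for $\Exp_o$ on the totally geodesic $\mathbf S(\Gamma)$, are in place, everything else is transport along the isometry group.
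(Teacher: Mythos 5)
Your proof is correct and follows essentially the same route as the paper: reduce to $i\mathfrak a_0$ via the isotropy action, use the decomposition $\mathfrak a_0=\bigoplus_k\mathfrak a_0^{(k)}$ and strong orthogonality to get $\gamma_j(X)=\gamma_j(X_j)$, and identify $|\gamma_j(X_j)|=\pi$ with antipodality on the Helgason sphere $S^{(j)}$. You merely spell out the two directions separately and make explicit the normalization check (your step (ii)) that the paper's terser ``if and only if'' chain takes for granted.
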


\begin{proof} By using the action of $K_0$, we may assume that  $q\in T(\Gamma)\subset \mathbf S(\Gamma)$.  Then $q$ is in the first conjugate locus of $o$ if and only if there exists $H\in \mathfrak a_0$ such that $\exp iH(o) = q$ and there exists $j, 1\leq j\leq r$ such that $\gamma_j(H) = \pm \pi$. Let $q=(q_1,\dots, q_r)$ with $q_k\in S^{(k)}$, and let $H=\sum_{k=1}^r H_k$ be the decomposition of $X$ with respect to  the decomposition $\mathfrak a_0 = \bigoplus_{k=1}^r  \mathfrak a_0^{(k)} $. Now $\gamma_j(H_j) = \gamma_j(H)=\pm \pi$, hence $q_j = \exp iH_j (o_j) = q_j$ is in the first conjugate locus of $o_j$ in the sphere $S^{j}\simeq \mathbb C\mathbb P_1$, that is to say $q_j$ is antipodal to $o_j$.
\end{proof}

\subsection{Cut locus}
Let $N$ be a Riemannian manifold and let $p\in N$. A tangent vector $X$ at $p$ is said to belong to the \emph{tangential cut locus}  of $p$ is the geodesic segment starting from $o$ in the direction $X$ is arc-length minimizing up to $\Exp_p X$, but not further.

 The \emph{cut locus} of $p$ is the image under $\Exp_p$ of the tangent cut locus of $p$.
 
\begin{proposition}\label{Cut}
 Let $M$ be an irreducible  Hermitian symmetric space of compact type. Then for any $p$ in $M$, the cut locus of $p$ is equal to the first conjugate locus of $p$. 
\end{proposition}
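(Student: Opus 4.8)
The plan is to reduce the claim to the rank-one case using the polysphere embedding, then verify it directly on $\mathbb{CP}_1$. By homogeneity it suffices to treat the cut locus of the origin $o$. First I would recall the general structure from Crittenden's theorem: on any complete Riemannian manifold, the cut locus is the union of the first conjugate locus and the set of points joined to $p$ by (at least) two distinct minimizing geodesics, so one inclusion is automatic — the first conjugate locus is always contained in the cut locus once we know geodesics stop minimizing at first conjugate points (which for symmetric spaces is classical, since beyond a conjugate point a broken geodesic gives a strictly shorter path). So the real content is the reverse inclusion: \emph{the cut locus is no larger than the first conjugate locus}, i.e. every tangential cut vector is a first tangential conjugate vector.

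For this I would exploit Corollary~\ref{geodesicpolysphere}: given a geodesic $\gamma$ from $o$, after applying an element of $K_0$ we may assume $\gamma$ lies in the maximal torus $T(\Gamma)\subset \mathbf S(\Gamma)$, so $\gamma(t)=\exp(itH)(o)$ for some $H\in\mathfrak a_0$. Writing $H=\sum_k H_k$ along the decomposition $\mathfrak a_0=\bigoplus_k\mathfrak a_0^{(k)}$, the geodesic decomposes as a product of geodesics $\gamma_k(t)=\exp(itH_k)(o_k)$ in the factors $S^{(k)}\simeq\mathbb{CP}_1$. Since $\mathbf S(\Gamma)$ is totally geodesic (Proposition~\ref{poly}), $\gamma$ is minimizing in $M$ iff it is minimizing in $\mathbf S(\Gamma)$, and in a Riemannian product a geodesic is minimizing iff each coordinate geodesic is minimizing. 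On the round sphere $S^{(k)}$ — normalized so that $\gamma_k(H)$ measures arc length appropriately — the coordinate geodesic stops minimizing exactly when it reaches the antipode $-o_k$, which is precisely the first conjugate point on $\mathbb{CP}_1$. Hence $\gamma$ ceases to minimize at the first $t_0$ for which some $|\gamma_j(t_0 H)|=\pi$, i.e. when $\max_j|\gamma_j(t_0H)|=\pi$; but by Proposition~\ref{FTCL} this is exactly the condition that $t_0 H\in C_o^{(1)}\cap i\mathfrak a_0$. Conjugating back by $K_0$ gives the equality of cut locus and first conjugate locus for general $p$.

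The step I expect to require the most care is the product-manifold argument for minimizing geodesics combined with the passage between ``minimizing in $\mathbf S(\Gamma)$'' and ``minimizing in $M$''. The totally geodesic property gives that geodesics of $\mathbf S(\Gamma)$ are geodesics of $M$, but one must argue that a $\mathbf S(\Gamma)$-geodesic which is $M$-minimizing is $\mathbf S(\Gamma)$-minimizing (immediate, since the $\mathbf S(\Gamma)$-distance dominates the $M$-distance along paths in $\mathbf S(\Gamma)$) and, more delicately, that every $o$-geodesic's cut point can be detected inside a single polysphere — which is exactly what Corollary~\ref{geodesicpolysphere} supplies, since it places the \emph{whole} geodesic (not just its initial direction) into $\mathbf S(\Gamma)$. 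A secondary point is the normalization of the metric on each $S^{(k)}$: one must check that the inner product $q$ restricted to $\mathfrak a_0^{(k)}$ makes the antipodal point occur precisely where $|\gamma_j|=\pi$ rather than at some other multiple of $\pi$, but this is forced by the root-system data (type $C_r$ or $BC_r$) recorded in Subsection~2.1 and by the fact that $\gamma_k$ has multiplicity one, so that $S^{(k)}$ is a rank-one space of the expected diameter. Once these normalizations are pinned down, the conclusion is immediate from Propositions~\ref{FTCL} and~\ref{FCL}.
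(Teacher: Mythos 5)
There is a genuine gap, and it sits exactly where the difficulty of the proposition lies. Your argument correctly shows that along a geodesic $\gamma(t)=\exp(itH)(o)$ in $T(\Gamma)$ the cut time computed \emph{inside the polysphere} $\mathbf S(\Gamma)$ coincides with the first conjugate time in $M$ (the product-of-spheres computation and Proposition~\ref{FTCL} are fine). But to conclude you need the cut time of $\gamma$ \emph{in $M$} to equal its cut time in $\mathbf S(\Gamma)$, i.e.\ that a segment of $\gamma$ which is minimizing in the totally geodesic submanifold $\mathbf S(\Gamma)$ is still minimizing in $M$ --- no competitor path of $M$ that leaves the polysphere can be shorter. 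That is precisely the implication you do \emph{not} supply: you only record the easy converse ($M$-minimizing $\Rightarrow$ $\mathbf S(\Gamma)$-minimizing, since paths in $\mathbf S(\Gamma)$ are paths in $M$ of the same length), and that converse only yields $t_{\mathrm{cut}}^M\le t_{\mathrm{conj}}^M$, the trivial inequality. Totally geodesic submanifolds are in general not metrically convex in the required sense (a closed geodesic is totally geodesic and minimizes within itself up to half its length, yet typically stops minimizing in the ambient space much earlier), so the missing implication is a substantive assertion, essentially equivalent to the proposition itself. A symptom of the gap is that your proof never invokes the simple connectivity of $M$, whereas the statement fails without it (on a flat torus the first conjugate locus is empty while the cut locus is not; on $\mathbb R\mathbb P^n$ the cut locus sits at distance $\pi/2$ and the first conjugate locus at distance $\pi$); the topology must enter somewhere, and it is exactly what controls which competing geodesics of $M$ can undercut $\gamma$ before its first conjugate point.

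A secondary inaccuracy: the inclusion you call automatic, $\mathbf C_p^{(1)}\subset\mathrm{Cut}(p)$, is not automatic and is false in general (again $\mathbb R\mathbb P^n$: the first conjugate locus of $p$ is $\{p\}$, which is not in the cut locus). The structure theorem you paraphrase says that a cut point is either a first conjugate point or is joined to $p$ by two minimizing geodesics; it gives no containment of the first conjugate locus in the cut locus. For comparison, the paper's proof is one line and takes a different route entirely: $M$ is simply connected (Helgason, Ch.~VIII, Thm.~4.6), and Crittenden's Theorem~5 states that for a simply connected compact symmetric space the cut locus equals the first conjugate locus. If you want a self-contained geometric proof along your lines, the piece to add is an argument that $d_M$ restricted to $\mathbf S(\Gamma)\times\mathbf S(\Gamma)$ agrees with $d_{\mathbf S(\Gamma)}$; that is where the real work, and the use of simple connectivity, actually lives.
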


\begin{proof} Any Hermitian symmetric space of compact type is simply connected (see \cite{h} chapter VIII, Theorem 4.6). Hence the statement follows from a more general theorem due to Crittenden (see \cite{cr} Theorem 5).
\end{proof}

As a consequence, Proposition \ref{FCL} is valid when the first conjugate locus is replaced by the cut locus. Let us mention that another proof of this result can be obtained for Hermitian symmetric spaces of compact type by using the more sophisticated result obtained by Tasaki, based on the notion of \emph{diastasis} (see \cite{t} Corollary 8).

\subsection{Unique minimizing geodesic}

Given two points $p,q\in M$, there always exists a minimizing geodesic segment joining $p$ to $q$. The next proposition describes the situation where the segment is not unique.

\begin{proposition}\label{UMG}
 Let $p,q\in M$. The following assertions are equivalent :
\smallskip

$i)$ there exists (at least) two distinct minimizing geodesic segments joining $p$ to $q$.
\smallskip

$ii)$ there exists an infinity of distinct minimizing geodesic segments joining $p$ to $q$.
\smallskip

$iii)$ $q$ belongs to the cut locus of $p$.
\smallskip

$iv)$ there exists $u\in U$ such that $u(p) = o$, $q'=u(q)$ belongs to $\mathbf S(\Gamma)$ and for some  $j, 1\leq j\leq r$, $q'_j$ is the antipodal point of $o_j$  in $S^{(j)}$.
\end{proposition}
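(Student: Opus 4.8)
The plan is to prove the cycle of implications $(ii)\Rightarrow(i)\Rightarrow(iii)\Rightarrow(ii)$ together with the equivalence $(iii)\Leftrightarrow(iv)$; this yields all four equivalences. The implication $(ii)\Rightarrow(i)$ is immediate. For $(i)\Rightarrow(iii)$ I would invoke the classical fact that on a complete Riemannian manifold there is a \emph{unique} minimizing geodesic from $p$ to any point $q$ \emph{not} lying on the cut locus of $p$ (up to the cut point a geodesic is the unique minimizer and carries no conjugate point); the contrapositive is exactly $(i)\Rightarrow(iii)$.

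The equivalence $(iii)\Leftrightarrow(iv)$ merely repackages the results already established. Since $U$ acts transitively by isometries, $q$ lies in the cut locus of $p$ if and only if $u(q)$ lies in the cut locus of $o$ for some (equivalently, any) $u\in U$ with $u(p)=o$; by Proposition~\ref{Cut} this cut locus is the first conjugate locus $\mathbf C_o^{(1)}$. By Proposition~\ref{FCL}, together with the $K_0$-reduction used in its proof (which moves a point of $\mathbf C_o^{(1)}$ into the fixed maximal torus $T(\Gamma)\subset\mathbf S(\Gamma)$), this happens precisely when, after composing with a suitable element of $K_0$, the image $q'$ of $q$ lies in $\mathbf S(\Gamma)$ and $q'_j$ is antipodal to $o_j$ in $S^{(j)}$ for some $j$. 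Absorbing that element of $K_0$ into $u$ gives $(iv)$; reading the same two propositions backwards gives the converse.

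The substantial step is $(iii)\Rightarrow(ii)$. By Corollary~\ref{geodesicpolysphere}, in the sharper form visible from its proof in which the geodesic is moved inside the maximal torus, we may — after composing with an element of $U$ carrying $p$ to $o$ — assume $p=o$ and that there is a minimizing geodesic from $o$ to $q$ of the form $\sigma_0\colon t\mapsto\exp(itH)(o)$, $t\in[0,1]$, with $H\in\mathfrak a_0$. Since $q$ lies in the cut locus of $o$, Propositions~\ref{FTCL} and~\ref{Cut} force $\max_{1\le k\le r}|\gamma_k(H)|=\pi$; fix an index $j_0$ with $|\gamma_{j_0}(H)|=\pi$ and write $H=\sum_k H_k$ with $H_k\in\mathfrak a_0^{(k)}$, so that $|\gamma_{j_0}(H_{j_0})|=\pi$. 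By the polysphere embedding (Proposition~\ref{poly}) the subgroup $U^{(j_0)}\cong SU(2)$ acts on $\mathbf S(\Gamma)\cong S^{(1)}\times\cdots\times S^{(r)}$ only through its action on the factor $S^{(j_0)}$, because the $U^{(k)}$ mutually commute; hence the stabilizer $T_{j_0}=\{\rho\in U^{(j_0)}:\rho(o_{j_0})=o_{j_0}\}$, a circle subgroup, fixes $o$ and lies in $K_0$, and each such $\rho$ also fixes the antipode $q_{j_0}=\exp(iH_{j_0})(o_{j_0})$ of $o_{j_0}$ in $S^{(j_0)}$, so that $\rho(q)=q$. Therefore $\rho\circ\sigma_0$ is again a minimizing geodesic from $o$ to $q$ for every $\rho\in T_{j_0}$, and these are pairwise distinct: the $S^{(j_0)}$-component of $\sigma_0$ is the nonconstant meridian $t\mapsto\exp(itH_{j_0})(o_{j_0})$, whose interior points are not the poles of $S^{(j_0)}$, and $T_{j_0}$ acts freely off the poles. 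This exhibits a whole circle of distinct minimizing geodesics from $o$ to $q$, which proves $(ii)$ (and a fortiori $(i)$).

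The delicate point is the reduction in the previous paragraph: one needs that the explicit geodesic $\sigma_0$ minimizes \emph{in $M$}, and not merely inside the polysphere or the maximal torus — which is exactly what the tangential description of Proposition~\ref{FTCL} combined with the coincidence of cut and first conjugate loci in Proposition~\ref{Cut} provides (in effect, along each geodesic ray from $o$ the cut point equals the first conjugate point). One must also be sure that $T_{j_0}$ fixes the \emph{entire} point $q\in M$ and not merely its $j_0$-th coordinate, which rests on the product structure of $\mathbf S(\Gamma)$ from Proposition~\ref{poly} and on the strong orthogonality, hence commutativity, of the $\mathfrak g^{(k)}$. Granting these, the remaining verifications are routine.
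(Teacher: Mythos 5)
Your proof is correct and rests on the same pillars as the paper's: the classical Riemannian fact for $i)\Rightarrow iii)$ (the paper cites do Carmo), and the combination of Propositions \ref{Cut} and \ref{FCL} for the passage between the cut locus and the polysphere picture. The one place where you genuinely diverge is the production of infinitely many minimizing geodesics. The paper goes $iii)\Rightarrow iv)\Rightarrow ii)$ and disposes of $iv)\Rightarrow ii)$ in one line (``two antipodal points on a sphere can be joined by an infinite number of minimizing distinct geodesic segments''), which tacitly uses that those sphere geodesics, completed by the fixed components in the other factors of $\mathbf S(\Gamma)$, are still minimizing in the ambient space $M$. You instead prove $iii)\Rightarrow ii)$ directly by taking one minimizing geodesic of $M$, normalized into the maximal torus via Corollary \ref{geodesicpolysphere}, and moving it by the circle subgroup $T_{j_0}\subset K_0$ that fixes both endpoints; an isometry of $M$ fixing $o$ and $q$ manifestly carries a minimizing geodesic of $M$ to another one, and the freeness of the $T_{j_0}$-action off the poles of $S^{(j_0)}$ gives distinctness. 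Your version is slightly longer but is self-contained on exactly the point the paper leaves implicit, at the cost of invoking (as you note) the standard fact that every minimizing geodesic reaching a cut point ceases to minimize there, so that its tangent vector lies in the tangential cut locus and Proposition \ref{FTCL} applies. Both arguments ultimately exploit the same geometry of the Helgason sphere factor, and your $iii)\Leftrightarrow iv)$ is the paper's verbatim.
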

\begin{proof} The fact that $i)$ implies $iii)$ is a general result valid for any  Riemannian manifold (see e.g. \cite{dc} Proposition 2.2, chap. 13). Now let $q$ belong to the cut locus of $p$. Combine Proposition \ref{Cut} and Proposition \ref{FCL} to conclude that $iii)\Longrightarrow iv)$. As two antipodal points on a sphere can be joined by an infinite number of minimizing distinct geodesic segment, $iv)$ implies $ii)$ and $ii)$ trivially implies $i)$.
\end{proof}



\subsection{The big cell}

We now use the realization of $M$ as $\boldsymbol G/\boldsymbol K\boldsymbol P_-$ and consider the mapping
\[\Xi : \mathfrak p^+ \longrightarrow \boldsymbol G/\boldsymbol K\boldsymbol P_-,\quad z\longmapsto \exp(z) \boldsymbol K\boldsymbol P_-\ .
\]
Then the image of $\Xi$ is a dense open subset of $M$. 

\begin{proposition}\label{bigcell}
The image of $\,\Xi$ is equal to $M\smallsetminus \mathbf C_0^{(1)}$.
\end{proposition}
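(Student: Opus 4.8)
The plan is to prove the two inclusions $\im\Xi\subseteq M\smallsetminus\mathbf C_0^{(1)}$ and $M\smallsetminus\mathbf C_0^{(1)}\subseteq\im\Xi$, in both cases transporting the problem into the polysphere $\mathbf S(\Gamma)\simeq S^{(1)}\times\dots\times S^{(r)}$ and then into a single Helgason sphere $S^{(k)}\simeq\mathbb C\mathbb P_1$. Four facts are used. (i) Equation \eqref{aKp} lets one conjugate any $z\in\mathfrak p_+$ into $\mathfrak a_+=\bigoplus_k\mathbb C X_{\gamma_k}$ by an element of $K_0$, and likewise $i\mathfrak p_0=\bigcup_{k\in K_0}\Ad k\,(i\mathfrak a_0)$ on the tangent space. (ii) By Propositions \ref{FTCL} and \ref{Cut}, $\Exp_o$ maps an open domain $D_o\subseteq T_oM$ diffeomorphically onto $M\smallsetminus\mathbf C_0^{(1)}$, with $D_o=\bigcup_{k\in K_0}\Ad k\,(D_o\cap i\mathfrak a_0)$ and $D_o\cap i\mathfrak a_0=\{H\in i\mathfrak a_0:\max_j|\gamma_j(H)|<\pi\}$. (iii) By Proposition \ref{poly}, $\mathbf S(\Gamma)$ is totally geodesic, so $\Exp_o$ on $T_o\mathbf S(\Gamma)$ is the product of the maps $\Exp_{o_k}$. (iv) For a single sphere: since $\mathfrak g^{(k)}\cong\mathfrak{sl}(2,\mathbb C)$ with $\mathfrak g^{(k)}\cap\mathfrak p_+=\mathbb C X_{\gamma_k}$, the subgroup $\mathbf G^{(k)}\cap\mathbf K\mathbf P_-$ is a Borel $\mathbf B^{(k)}$ with $S^{(k)}=\mathbf G^{(k)}/\mathbf B^{(k)}$, $o_k=[\mathbf B^{(k)}]$; the orbit $\exp(\mathbb C X_{\gamma_k})\cdot o_k$ is the big Bruhat cell, i.e.\ $S^{(k)}$ minus the second torus-fixed point, which (the induced metric being a multiple of the round one) is the antipode $\bar o_k$ of $o_k$; and, parametrizing the geodesics through $o_k$ by $H\in\mathfrak a_0^{(k)}\mapsto\exp(iH)\cdot o_k$ and applying Proposition \ref{FTCL} to the rank-one space $S^{(k)}$, this geodesic hits $\bar o_k$ exactly when $|\gamma_k(iH)|=\pi$. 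In particular $\exp(iH)\cdot o_k\ne\bar o_k$ whenever $|\gamma_k(iH)|<\pi$, and every point of $S^{(k)}\smallsetminus\{\bar o_k\}$ is $\Exp_{o_k}(X_k)$ for a unique $X_k$ in the open injectivity ball.

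For $\im\Xi\subseteq M\smallsetminus\mathbf C_0^{(1)}$: write $q=\exp(z)\cdot o$, $z\in\mathfrak p_+$, and by (i) put $z=\Ad k\,(z')$, $k\in K_0$, $z'=\sum_k z_k'$, $z_k'\in\mathbb C X_{\gamma_k}$. Since the $\mathfrak g^{(k)}$ commute, $k^{-1}q=\exp(z')\cdot o=(\exp(z_1')\cdot o_1,\dots,\exp(z_r')\cdot o_r)\in\mathbf S(\Gamma)$, and by (iv) each coordinate is $\ne\bar o_k$, hence $\exp(z_k')\cdot o_k=\Exp_{o_k}(X_k)$ with $X_k$ in the injectivity ball; rotating $X_k$ by the isotropy group $K_0^{(k)}$ of $o_k$ in $U^{(k)}$ into $i\mathfrak a_0^{(k)}$, write $X_k=\Ad m_k\,(iH_k)$ with $|\gamma_k(iH_k)|<\pi$. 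Strong orthogonality of the $\gamma_k$ yields $\sum_k X_k=\Ad\big(\textstyle\prod_k m_k\big)\big(i\sum_k H_k\big)$ with $\max_j|\gamma_j(i\sum_k H_k)|<\pi$, so $\sum_k X_k\in D_o$; by (iii), $k^{-1}q=\Exp_o\big(\sum_k X_k\big)\in\Exp_o(D_o)=M\smallsetminus\mathbf C_0^{(1)}$, and since $\mathbf C_0^{(1)}$ is $K_0$-invariant, $q\in M\smallsetminus\mathbf C_0^{(1)}$.

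For $M\smallsetminus\mathbf C_0^{(1)}\subseteq\im\Xi$: write $q=\Exp_o(X)$ with $X\in D_o$, and by (i)--(ii) put $X=\Ad k_0\,(iH)$, $k_0\in K_0$, $\max_j|\gamma_j(iH)|<\pi$. Then $k_0^{-1}q=\exp(iH)\cdot o=(\exp(iH_1)\cdot o_1,\dots,\exp(iH_r)\cdot o_r)$ with $H=\sum_k H_k$, $H_k\in\mathfrak a_0^{(k)}$; by strong orthogonality $|\gamma_k(iH_k)|=|\gamma_k(iH)|<\pi$, so by (iv) each coordinate lies in $\exp(\mathbb C X_{\gamma_k})\cdot o_k$. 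Choosing $z_k\in\mathbb C X_{\gamma_k}$ with $\exp(z_k)\cdot o_k=\exp(iH_k)\cdot o_k$ gives $k_0^{-1}q=\exp\big(\sum_k z_k\big)\cdot o\in\exp(\mathfrak a_+)\cdot o\subseteq\exp(\mathfrak p_+)\cdot o=\im\Xi$; as $K_0\subseteq\mathbf K$ normalizes $\mathbf P_+$, $\im\Xi$ is $K_0$-invariant, hence $q\in\im\Xi$.

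The step I expect to be the main obstacle is fact (iv): one must check with care that the orbit $\exp(\mathbb C X_{\gamma_k})\cdot o_k$ is \emph{exactly} $S^{(k)}\smallsetminus\{\bar o_k\}$ --- the complement of the \emph{antipode}, and not of some other point --- and that the parametrization $H\mapsto\exp(iH)\cdot o_k$ is normalized so that the antipode is reached precisely at $|\gamma_k(iH)|=\pi$, compatibly with the normalization of the restricted roots in Proposition \ref{FTCL}. Attached to this are the routine but essential verifications that $\gamma_k$ vanishes on $i\mathfrak a_0^{(j)}$ for $j\ne k$, that conjugation by $m_l\in K_0^{(l)}$ acts trivially on $\mathfrak g^{(k)}$ for $l\ne k$, and that the exponential map of the totally geodesic polysphere is the product of the $\Exp_{o_k}$; all the reductions above rest on these, so they should be made explicit.
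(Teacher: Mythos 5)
Your proof is correct and follows essentially the same route as the paper's: reduce by $K_0$-invariance (via \eqref{aKp}) to the polysphere $\mathbf S(\Gamma)$, identify the image of $\Xi$ there as the product of the spheres $S^{(k)}$ each punctured at the antipode of $o_k$ by the elementary $SL(2,\mathbb C)$ computation, and match this set with the antipodal-point description of the first conjugate locus. The only difference is one of bookkeeping: the paper concludes by citing Proposition \ref{FCL} directly, whereas you re-derive the needed characterization through the tangent conjugate locus and the exponential map of the totally geodesic polysphere.
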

\begin{proof} First remark that both $\im (\Xi)$ and  $M\smallsetminus \mathbf C_0^{(1)}$ are invariant under the action of $K$. 

Next recall the Lie algebra $\mathfrak g(\Gamma)$ introduced in \eqref{Gamma}, which is isomorphic to a product of $r$ copies of $\mathfrak {sl}(2,\mathbb C)$ , and 
let 
\[{\mathfrak p_+} ^{(k)}= \mathfrak p_+\cap \mathfrak g^{(k)},\qquad  \mathfrak p_+(\Gamma) = \mathfrak g(\Gamma) \cap \mathfrak p_+=\bigoplus_{j=1}^r {\mathfrak p_+}^{(k)}\ ,\]
and consider the restriction $\Xi_{\vert {\mathfrak p_+}^{(k)}}$ of $\Xi$ to ${\mathfrak p_+}^{(k)}$. By an elementary $SL(2,\mathbb C)$ calculation, the image of $\Xi_{\vert {\mathfrak p_+}^{(k)}}$ is equal to $S^{(k)}\smallsetminus \{ -o_k\}$, where $-o_k$ is the antipodal point to $o_k$ in $S^{(k)}$. Hence the image of $\Xi_{\vert \mathfrak p_+(\Gamma)}$ is equal to \[\big(S^{(1)}\smallsetminus \{-o_1\} \big)\times \dots \times \big(S^{(r)}\smallsetminus \{-o_r\}\big)\ ,\]
and is equal to
 $ \mathbf S(\Gamma)\smallsetminus \mathbf C_0^{(1)}$. By $K$-invariance, $\im(\Xi) = M\smallsetminus \mathbf C_0^{(1)}$.
 \end{proof}

\section{The homology of  Hermitian symmetric spaces of compact type}

To understand the ambiguity in the definition of the symplectic area of a geodesic triangle, it is necessary to study its topology and more precisely the homology with coefficients in $\mathbb Z$ in degree 2. In general, the homology 
$H_*(M,\mathbb Z)$ is explicitly known and related to \emph{Schubert cells}. Our main reference for this section is \cite{bgg}.

Let $\mathfrak h$ be a complex Cartan subalgebra of $\mathfrak k$. Notice that $\mathfrak z\subset \mathfrak h$ and hence $\mathfrak h$ is also a Cartan subalgebra of $\mathfrak g$. Let $\Delta$ be the root system of $(\mathfrak g, \mathfrak h)$ and choose a system of positive roots $\Delta^+$ such that any $\alpha\in \Delta$ satisfying $\alpha(H_0)=i$ is a positive root. Let $\mathbf B^-$ be the Borel subgroup of $\mathbf G$ associated to $-\Delta^+$, and let $\mathbf N^-$ be its unipotent radical. Let $\mathbf W$ be the Weyl group of $\mathbf G$. For $w\in \mathbf W$, let $l(w)$ be the length with respect to the set $\Pi$ of simple roots in $\Delta^+$. Let $s$ be the unique element of maximal length $l(s) = r$, and let $\mathbf N = s\mathbf N^-s^{-1}$. For $w\in \mathbf W$, set
\[{\mathbf N}^-_w = w\mathbf N w^{-1} \cap {\mathbf N^-}\ .
\]
Then $\mathbf N^-_w$ is a unipotent subgroup of $\mathbf N^-$ of (complex) dimension $l(w)$.

Among the simple roots, one and only one is non-compact, say $\alpha_1$. In particular, $\alpha_1(H_0) = i$. Let $\Theta = \Pi\smallsetminus{\{\alpha_1\}}$ be the set of compact simple roots, and let $\mathbf W_\Theta$ be the subgroup of $\mathbf W$ generated by the reflexions $s_\alpha, \alpha \in \Theta$. $\mathbf W_\Theta$ is the Weyl group of the reductive subgroup $\mathbf K$.

The Borel subgroup $\mathbf B^-$ acts on $M$ with a finite number of orbits. They are indexed by the coset space $\mathbf W/\mathbf W_\Theta$. More precisely,
\[M = \cup_{w\in \mathbf W/\mathbf W_\Theta} \mathbf B^- w o\ .
\]

\begin{lemma} Let $\mathbf W^\Theta$ be the set of $w\in \mathbf W$ such that $w\Theta\subset \Delta_+$. Then any coset in $\mathbf W/\mathbf W_\Theta$ contains a unique representative in $\mathbf W^\Theta$.
\end{lemma}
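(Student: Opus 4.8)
The plan is to prove the standard fact about minimal-length coset representatives in a Weyl group, specialized to the parabolic subgroup $\mathbf W_\Theta$. The statement has two parts: existence of a representative in $\mathbf W^\Theta$ in each coset, and its uniqueness.

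For existence, I would start with an arbitrary $w\in\mathbf W$ and pick a representative $w_0$ of minimal length in the coset $w\mathbf W_\Theta$. I claim $w_0\in\mathbf W^\Theta$. Indeed, suppose $w_0\alpha\prec 0$ for some simple $\alpha\in\Theta$. Then a standard length computation gives $l(w_0 s_\alpha) = l(w_0) - 1$: writing the action of $s_\alpha$ on positive roots, $s_\alpha$ permutes $\Delta^+\smallsetminus\{\alpha\}$ and sends $\alpha$ to $-\alpha$, so $w_0 s_\alpha$ sends one fewer positive root to a negative root than $w_0$ does (precisely because $w_0\alpha\prec 0$). But $w_0 s_\alpha$ lies in the same coset $w\mathbf W_\Theta$, contradicting minimality of $l(w_0)$. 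Hence $w_0\alpha\succ 0$ for every $\alpha\in\Theta$, and since $\mathbf W_\Theta$ is generated by the $s_\alpha$ with $\alpha\in\Theta$ and every element of $\Delta_\Theta^+ := \Delta^+\cap\mathbb Z\Theta$ is a non-negative combination of such $\alpha$'s, one gets $w_0\Theta\subset\Delta_+$, i.e. $w_0\in\mathbf W^\Theta$.

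For uniqueness, suppose $w_0, w_1\in\mathbf W^\Theta$ lie in the same coset, so $w_1 = w_0 v$ with $v\in\mathbf W_\Theta$. The key inequality is the length-additivity $l(w_0 v) = l(w_0) + l(v)$ whenever $w_0\in\mathbf W^\Theta$ and $v\in\mathbf W_\Theta$, which follows by induction on $l(v)$: if $l(v)>0$ pick a simple reflection $s_\alpha$ ($\alpha\in\Theta$) with $l(vs_\alpha)=l(v)-1$; since $w_0$ sends all of $\Delta_\Theta^+$ to positive roots, $w_0 v$ still sends $v^{-1}\alpha\in\Delta_\Theta^+$... more carefully, one shows $w_0 v\alpha\succ 0$ iff $v\alpha\succ 0$ for $\alpha\in\Theta$, whence the number of inversions of $w_0 v$ among $\Delta^+$ splits cleanly. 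Applying this with $v$ minimal of length $0$ in its own situation, both $w_0$ and $w_1=w_0 v$ have $l(w_1)=l(w_0)+l(v)$; since $w_0$ was a representative one may also run the argument from $w_1$'s side to get $l(w_0)=l(w_1)+l(v^{-1})$, forcing $l(v)=0$, hence $v=e$ and $w_0=w_1$.

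The main obstacle, such as it is, is purely bookkeeping: carefully tracking how $l(ws_\alpha)$ compares to $l(w)$ in terms of the sign of $w\alpha$, and verifying the length-additivity $l(w_0 v)=l(w_0)+l(v)$ for $w_0\in\mathbf W^\Theta$, $v\in\mathbf W_\Theta$. Both are classical (Bourbaki, or the reference \cite{bgg} cited for this section), so I would keep the write-up brief, citing \cite{bgg} for the combinatorial lemmas on minimal coset representatives and giving only the short argument above for completeness. No deep input is needed beyond the basic exchange condition for Coxeter groups.
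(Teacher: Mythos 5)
Your argument is correct: it is the classical proof that each coset of a standard parabolic subgroup $\mathbf W_\Theta$ has a unique minimal-length representative, characterized by $w\Theta\subset\Delta_+$, via the sign criterion for $l(ws_\alpha)$ and the length additivity $l(w_0v)=l(w_0)+l(v)$ for $w_0\in\mathbf W^\Theta$, $v\in\mathbf W_\Theta$. The paper itself gives no proof of this lemma, treating it as a standard fact from the reference \cite{bgg}, so your write-up simply supplies the omitted classical argument and is consistent with the paper's intent.
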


For $w\in \mathbf W^\Theta$, let $X_w = \mathbf B^- w o$. 

\begin{lemma} Let $w\in W^\Theta$.  The map
\begin{equation}\label{parSchubert}
\mathbf N^-_w \ni n\longmapsto nwo \in X_w
\end{equation}
is an isomorphism of algebraic varieties.
\end{lemma}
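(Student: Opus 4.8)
The plan is to exhibit the inverse map explicitly and check it is a morphism, using the standard Bruhat-type decomposition adapted to the parabolic $\mathbf K\mathbf P_-$. First I would recall that $wo = w\mathbf K\mathbf P_-$ in the realization $M = \mathbf G/\mathbf K\mathbf P_-$, and that the orbit $X_w = \mathbf B^- w o$ is a locally closed (indeed, by $w\in\mathbf W^\Theta$, a well-defined) Schubert cell. The key algebraic input is the factorization of the Borel (or rather of $\mathbf N^-$) relative to the point $wo$: writing $\mathbf N^- = (\mathbf N^- \cap w\mathbf N w^{-1})\cdot(\mathbf N^- \cap w\mathbf N^- w^{-1})$ as a product of subgroups (a consequence of the fact that $\Delta^- $ splits, under $w^{-1}$, into roots sent to $\Delta^+$ and roots sent to $\Delta^-$), one sees that the second factor stabilizes the coset $wo$: indeed $w^{-1}(\mathbf N^- \cap w\mathbf N^- w^{-1})w \subset \mathbf N^-$, and one must then check this lands in the stabilizer $\mathbf K\mathbf P_-$ of $o$. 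Here one uses that $w\in\mathbf W^\Theta$ means $w\Theta \subset \Delta_+$, so that the negative roots fixed setwise by the relevant intersection are exactly those in the parabolic's Lie algebra $\mathfrak k \oplus \mathfrak p_-$ (the roots that are either compact or satisfy $\alpha(H_0) = -i$). Consequently $\mathbf N^- wo = \mathbf N^-_w wo$, and the orbit map restricted to $\mathbf N^-_w$ is surjective onto $X_w$.

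Next I would prove injectivity: if $n_1 wo = n_2 wo$ with $n_1,n_2\in\mathbf N^-_w$, then $w^{-1}n_2^{-1}n_1 w \in \mathbf K\mathbf P_-$, while $w^{-1}n_2^{-1}n_1 w$ lies in $w^{-1}\mathbf N^-_w w \cdot$(the unipotent radical of the opposite parabolic conjugated appropriately) — more cleanly, $n_2^{-1}n_1 \in \mathbf N^-_w = w\mathbf N w^{-1}\cap\mathbf N^-$, and the Lie algebra of $w\mathbf N w^{-1}$ consists of root spaces for roots $\alpha$ with $w^{-1}\alpha \in \Delta^+$; conjugating by $w^{-1}$ we get a unipotent element of $\mathbf G$ whose Lie algebra components are positive-root spaces, which can lie in $\mathfrak k\oplus\mathfrak p_-$ only if it is trivial. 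Hence $n_1 = n_2$. This shows the map \eqref{parSchubert} is a bijective morphism of varieties.

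To upgrade bijectivity to an isomorphism of algebraic varieties, I would compute the differential at the base point (and then translate): the tangent space to $\mathbf N^-_w$ at $e$ is $\bigoplus_{\alpha} \mathfrak g_\alpha$ over the $l(w)$ negative roots $\alpha$ with $w^{-1}\alpha\in\Delta^+$, while the tangent space to $M$ at $wo$, transported by $w^{-1}$ to $T_oM \cong \mathfrak g/(\mathfrak k\oplus\mathfrak p_-) \cong \mathfrak p_+$, receives these root spaces injectively precisely because none of the roots $w^{-1}\alpha$ (which are positive) lands in $\mathfrak k\oplus\mathfrak p_-$; a dimension count ($l(w) = \dim\mathbf N^-_w = \dim X_w$) then forces the differential to be an isomorphism at every point. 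Since $\mathbf N^-_w$ is smooth and $X_w$ is a smooth quasi-affine variety (an orbit of a unipotent group, hence isomorphic to affine space), a bijective morphism with everywhere-invertible differential between smooth varieties in characteristic zero is an isomorphism.

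The main obstacle I expect is the bookkeeping in the factorization $\mathbf N^- = \mathbf N^-_w \cdot (\mathbf N^- \cap w\mathbf N^- w^{-1})$ and the verification that the second factor really stabilizes $wo$ — i.e. that $w^{-1}(\mathbf N^-\cap w\mathbf N^- w^{-1})w$ lies inside $\mathbf K\mathbf P_-$ and not merely inside $\mathbf N^-$. This is where the hypothesis $w\in\mathbf W^\Theta$ is essential: it guarantees that the set of negative roots $\alpha$ with $w^{-1}\alpha\in\Delta^-$ coincides with $w(\Delta_{\mathbf K\mathbf P_-}^-)$, where $\Delta_{\mathbf K\mathbf P_-}^- = \Delta_c^- \cup \Delta_{nc}^-$ is the set of roots of the opposite parabolic's radical together with the reductive part; getting this inclusion exactly right, rather than off by the compact-root contributions, is the delicate point. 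Everything else is the standard argument that a unipotent orbit map with invertible differential is an isomorphism.
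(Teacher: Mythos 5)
The paper states this lemma without proof (it is quoted from Bernstein--Gel'fand--Gel'fand), so your proposal can only be judged on its own terms. The overall architecture --- factor $\mathbf N^-$ as $\mathbf N^-_w\cdot(\mathbf N^-\cap w\mathbf N^- w^{-1})$, show the second factor fixes $wo$ to get surjectivity, prove injectivity, then upgrade the bijective morphism to an isomorphism --- is the standard and correct one. But you have placed the hypothesis $w\in\mathbf W^\Theta$ in the wrong half of the argument, and the half where it is actually needed contains a false assertion. For surjectivity no hypothesis on $w$ is needed at all: $w^{-1}(\mathbf N^-\cap w\mathbf N^- w^{-1})w=\mathbf N^-\cap w^{-1}\mathbf N^- w\subset\mathbf N^-\subset\mathbf B^-\subset\mathbf K\mathbf P_-$, since every root of $\mathbf B^-$ lies in $\Delta_c^-\cup\Delta_{nc}^-$; the discussion of $w\Theta\subset\Delta_+$ at that point is a red herring.

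The genuine gap is in injectivity. You claim that a unipotent element whose Lie algebra components lie in positive root spaces ``can lie in $\mathfrak k\oplus\mathfrak p_-$ only if it is trivial.'' This is false: $\mathfrak k$ contains $\mathfrak g_\alpha$ for every \emph{positive compact} root $\alpha$, so $\mathbf N\cap\mathbf K\mathbf P_-$ is the (generally nontrivial) unipotent group attached to $\Delta_c^+=\Delta_\Theta^+$. As written, your injectivity argument never uses $w\in\mathbf W^\Theta$ and would therefore ``prove'' injectivity for every $w$ --- yet for a nontrivial $w\in\mathbf W_\Theta$ one has $wo=o$ and $X_w=\{o\}$ while $\dim\mathbf N^-_w=l(w)>0$, so the map is badly non-injective there. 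The correct argument: if $n\in\mathbf N^-_w$ fixes $wo$, then $w^{-1}nw$ lies in $\mathbf N\cap w^{-1}\mathbf N^- w\cap\mathbf K\mathbf P_-$, whose roots lie in $\Delta^+\cap(\Delta_c\cup\Delta_{nc}^-)=\Delta_\Theta^+$; but $w\in\mathbf W^\Theta$ forces $w\Delta_\Theta^+\subset\Delta^+$ (a root that is a nonnegative combination of the positive roots $w\alpha$, $\alpha\in\Theta$, is positive), whereas $n\in\mathbf N^-$ requires $w$ to send these roots into $\Delta^-$; hence the support is empty and $n=e$. The same defect propagates to your tangent-space computation (``none of the roots $w^{-1}\alpha$ lands in $\mathfrak k\oplus\mathfrak p_-$'' needs the identical correction). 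The final step --- a bijective morphism with invertible differential between smooth varieties in characteristic zero is an isomorphism --- is fine once injectivity is repaired.
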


Denote by $[\overline {X_w}]$ the image in $H_*(M,\mathbb Z)$ of the fundamental cycle of $\overline {X_w}$ under the mapping induced by $\overline {X_w} \longmapsto M$.

\begin{theorem} The elements $[\overline {X_w}], w\in W^\Theta$ form a free basis of $H_*(M,\mathbb Z)$.
\end{theorem}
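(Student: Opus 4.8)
The plan is to realize this as the standard Bruhat-type cell decomposition argument for the generalized flag manifold $M = \mathbf G/\mathbf K\mathbf P_-$, following the lines of \cite{bgg}. First I would note that the orbit decomposition $M = \bigcup_{w\in \mathbf W^\Theta} X_w$ together with the preceding lemma exhibits each $X_w$ as an affine space: via the isomorphism $\mathbf N^-_w \cong X_w$ and the fact that $\mathbf N^-_w$ is a unipotent group of complex dimension $l(w)$, the cell $X_w$ is diffeomorphic to $\mathbb C^{l(w)}$, hence to a real cell of dimension $2l(w)$. Thus $M$ is a finite CW complex with exactly one cell in each even real dimension $2l(w)$, $w\in \mathbf W^\Theta$, and no cells in odd dimensions.

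Next, since all cells occur in even dimensions, the cellular boundary maps of this CW structure vanish identically: the cellular chain complex has $C_{2d}(M,\mathbb Z)$ free on the cells $X_w$ with $l(w)=d$, and $C_{\text{odd}}=0$, so every boundary map is the zero map. Hence $H_*(M,\mathbb Z)$ is free, with a $\mathbb Z$-basis given by the cellular chains, i.e. by the classes of the cells $[\overline{X_w}]$, $w\in \mathbf W^\Theta$. To make the identification of the cellular chain $X_w$ with the homology class $[\overline{X_w}]$ precise, I would use that the closure $\overline{X_w}$ is a (possibly singular) complex projective subvariety of complex dimension $l(w)$, that it carries a well-defined fundamental class in $H_{2l(w)}(\overline{X_w},\mathbb Z)$ because a complex variety admits a pseudomanifold structure whose singular locus has real codimension $\ge 2$, and that the pushforward of this fundamental class under $\overline{X_w}\hookrightarrow M$ represents the cellular generator; this is the content of the classical fact that Schubert varieties give a basis of the homology of a flag manifold.

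The main obstacle is precisely this last point: justifying that the singular closures $\overline{X_w}$ genuinely carry fundamental classes and that these map to the cellular basis, rather than to some $\mathbb Z$-combination involving lower cells with possible multiplicities. The clean way around it is to invoke that, for complex algebraic varieties, the Borel--Moore (equivalently, ordinary, since everything is compact) homology is generated by closed subvarieties with multiplicity one, and that $\overline{X_w}\setminus X_w$ is a union of cells $X_{w'}$ with $l(w')<l(w)$, so it has real codimension $\ge 2$ in $\overline{X_w}$ and therefore does not affect the top-dimensional fundamental class. Combined with the vanishing of all boundary maps, this forces the coefficient of $[\overline{X_w}]$ on the cellular generator to be $\pm 1$, and with the natural orientations coming from the complex structure it is $+1$. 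I would then simply cite \cite{bgg} for the full details of this identification and for the description of $\mathbf W^\Theta$, since the argument there applies verbatim to $M = \mathbf G/\mathbf K\mathbf P_-$.
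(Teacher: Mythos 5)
Your argument is correct and is exactly the standard Bruhat/Schubert-cell argument underlying the result; the paper itself offers no proof, only the citation to \cite{bgg} (Proposition 5.2, attributed to Borel \cite{bo}), which is the same source you invoke for the details. The one point worth being explicit about, which you gloss over, is that the even-dimensional cell decomposition is genuinely a CW structure (closures of cells are unions of lower cells, with suitable attaching maps); this is part of what \cite{bgg} supplies, so your deferral to that reference is appropriate.
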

See \cite{bgg}, Proposition 5.2, attributed to A. Borel (see \cite{bo}).

\begin{lemma} The variety $X_w$ is of complex dimension 1 if and only if $w=s_{\alpha_1}$.
\end{lemma}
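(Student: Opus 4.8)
The plan is to characterize which $w \in \mathbf W^\Theta$ give a Schubert variety $X_w$ of complex dimension $1$, using the fact (from the preceding Lemma) that $\dim_{\mathbb C} X_w = \dim_{\mathbb C} \mathbf N^-_w = l(w)$. So the statement reduces to the purely combinatorial claim that the unique element $w \in \mathbf W^\Theta$ with $l(w) = 1$ is $w = s_{\alpha_1}$. First I would recall that the elements of length $1$ in any Weyl group are exactly the simple reflections $s_\alpha$, $\alpha \in \Pi$. So I only need to decide which simple reflections lie in $\mathbf W^\Theta$, i.e. satisfy $s_\alpha(\Theta) \subset \Delta_+$.

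The key computation is the standard fact that for a simple root $\alpha$, the reflection $s_\alpha$ permutes $\Delta_+ \smallsetminus \{\alpha\}$ and sends $\alpha$ to $-\alpha$. Hence for $\beta \in \Theta \subset \Pi$: if $\beta \ne \alpha$ then $\beta \in \Delta_+ \smallsetminus\{\alpha\}$, so $s_\alpha(\beta) \in \Delta_+$; if $\beta = \alpha$ then $s_\alpha(\beta) = -\alpha \notin \Delta_+$. Therefore $s_\alpha \in \mathbf W^\Theta$ if and only if $\alpha \notin \Theta$, i.e. if and only if $\alpha = \alpha_1$ (the unique non-compact simple root). This shows $s_{\alpha_1}$ is the only simple reflection in $\mathbf W^\Theta$, hence the only element of $\mathbf W^\Theta$ of length $1$; combined with $\dim_{\mathbb C} X_w = l(w)$ and the fact that $X_w = \{wo\}$ is a point precisely when $l(w) = 0$ (i.e.\ $w = e$, the representative of the coset $\mathbf W_\Theta$), this proves the equivalence.

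I should also note the trivial direction: if $w = s_{\alpha_1}$ then $l(w) = 1$ so $X_w$ is genuinely $1$-dimensional (it is nonempty and not a point), which via the parametrization \eqref{parSchubert} is immediate. The only point requiring a word of care is that we are working with coset representatives: I would remark that $l$ restricted to $\mathbf W^\Theta$ takes the minimal value in each coset, and $l(w) = 1$ forces $w$ itself to be a simple reflection, which then must lie in $\mathbf W^\Theta$ by the computation above. I do not anticipate a serious obstacle here; the only thing to be careful about is not to conflate "length $1$ in $\mathbf W$" with "length $1$ as a coset representative" — but since every length-$1$ element of $\mathbf W$ is a simple reflection and we have just classified those inside $\mathbf W^\Theta$, the two notions agree in this case.
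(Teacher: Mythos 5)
Your proposal is correct and follows essentially the same route as the paper: reduce to $\dim_{\mathbb C}X_w = l(w)$, note that length-one elements are simple reflections, and use the fact that $s_\alpha$ permutes $\Delta^+\smallsetminus\{\alpha\}$ to conclude that $s_\alpha\in\mathbf W^\Theta$ exactly when $\alpha=\alpha_1$. The extra care you take about coset representatives is harmless but not needed beyond what you already say.
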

\begin{proof}
As $\dim(X_w)$ is equal to $l(w)$, it suffices to show that the reflexion $s_\alpha$ for $\alpha$ a simple root belongs to $\mathbf W^\Theta$ if and only if $\alpha = \alpha_1$. Now if $\alpha$ is a simple compact root, $s_\alpha \alpha = -\alpha$ so that $s_\alpha(\Delta_\Theta) \nsubseteq \Delta^+$. On the other hand $s_{\alpha_1}$ transforms any positive compact root into a positive root, so that $s_{\alpha_1}(\Delta_\Theta)\subset \Delta^+$.
\end{proof}

\begin{proposition}\label{sphere}
 The variety $\overline {X_{s_{\alpha_1}}}$ is  a Helgason sphere, isomorphic to $\mathbb C \mathbb P_1$.
\end{proposition}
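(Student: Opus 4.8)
The plan is to identify the closure $\overline{X_{s_{\alpha_1}}}$ concretely and to recognize it as one of the Helgason spheres already constructed in Subsection 2.1. First I would unwind the parametrization: by the previous lemma $X_{s_{\alpha_1}} = \mathbf N^-_{s_{\alpha_1}} s_{\alpha_1} o$, and $\mathbf N^-_{s_{\alpha_1}}$ is the one-dimensional unipotent subgroup corresponding to the single negative root $-\alpha_1$ (since $s_{\alpha_1}\mathbf N s_{\alpha_1}^{-1}\cap \mathbf N^-$ has Lie algebra $\mathfrak g_{-\alpha_1}$, as $\alpha_1$ is the unique simple root inverted by $s_{\alpha_1}$). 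Thus $X_{s_{\alpha_1}}$ is the orbit of the point $s_{\alpha_1} o$ under $\exp(\mathfrak g_{-\alpha_1})$. Together with the base point, this orbit together with its closure lies inside the orbit of $o$ under the analytic subgroup $\mathbf G^{(\alpha_1)}$ generated by $\mathfrak g_{\pm \alpha_1}$, which is the $\mathfrak{sl}(2,\mathbb C)$-triple attached to the non-compact simple root.

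Next I would invoke the construction recalled at the end of Subsection 2.1: the orbit $\mathbf G^{(\alpha_1)}\cdot o = U^{(\alpha_1)}\cdot o$ is precisely a Helgason sphere, biholomorphic to $\mathbb C\mathbb P_1$, via the Harish–Chandra construction followed by a Cayley transform reducing to the first construction. So it suffices to show $\overline{X_{s_{\alpha_1}}} = \mathbf G^{(\alpha_1)}\cdot o$. One inclusion is clear since $X_{s_{\alpha_1}} \subset \mathbf G^{(\alpha_1)}\cdot o$ (both $s_{\alpha_1}$ and $\exp \mathfrak g_{-\alpha_1}$ lie in $\mathbf G^{(\alpha_1)}$, after checking that $s_{\alpha_1}$ has a representative there, which it does because $s_{\alpha_1}$ is the nontrivial Weyl element of the rank-one subsystem $\{\pm\alpha_1\}$), and the sphere is compact hence contains the closure. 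For the reverse inclusion I would note that $\mathbf G^{(\alpha_1)}\cdot o \cong \mathbb C\mathbb P_1$ is an irreducible complex curve, $\overline{X_{s_{\alpha_1}}}$ is a closed subvariety of it of complex dimension $1$ (by the dimension lemma, $l(s_{\alpha_1})=1$), so they coincide.

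An alternative, perhaps cleaner route that avoids worrying about which group element lives where: since $\dim_{\mathbb C} \overline{X_{s_{\alpha_1}}} = 1$ and $M$ is a smooth projective variety, $\overline{X_{s_{\alpha_1}}}$ is an irreducible projective curve in $M$; it is known (from \cite{bgg}) that the Schubert varieties of dimension one in a Hermitian symmetric space $\mathbf G/\mathbf K\mathbf P_-$ are smooth rational curves, in fact lines in the minimal embedding. One then checks that such a curve is a totally geodesic $\mathbb C\mathbb P_1$ of maximal holomorphic sectional curvature by relating it to the $\mathfrak{sl}(2)$ associated to $\alpha_1$: the tangent direction at $o$ is the noncompact root vector $X_{-\alpha_1}$ (equivalently a highest weight vector in $\mathfrak p_+$ under $\mathbf K$), and this is exactly the direction spanning a Helgason sphere in the first construction. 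Since all Helgason spheres are conjugate under the isometry group, it is enough to produce one, and the $\mathbf K$-orbit structure pins this down.

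The main obstacle I anticipate is the bookkeeping that matches the Schubert-theoretic description (orbit of $s_{\alpha_1}o$ under the negative root group inside $\mathbf B^-$) with the Riemannian description (orbit of $o$ under the compact $SU(2)$ attached to $\alpha_1$). This requires care with the two different ``positive systems'' implicitly in play — the one adapted to $\mathbf K\mathbf P_-$ versus the one used for the $C_r$/$BC_r$ restricted root picture in Subsection 2.1 — and with the Cayley transform that bridges them. Once it is granted that $\overline{X_{s_{\alpha_1}}}$ is a $1$-dimensional totally geodesic complex submanifold through $o$ whose tangent space is spanned by the noncompact simple root vector, the identification with a Helgason sphere (and hence with $\mathbb C\mathbb P_1$) follows from the classification of such submanifolds recalled in Subsection 2.1 and the transitivity of the isometry group on Helgason spheres.
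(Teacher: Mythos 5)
Your main argument is correct and follows essentially the same route as the paper: identify $\mathbf N^-_{s_{\alpha_1}}=\exp(\mathfrak g_{-\alpha_1})$, observe that $X_{s_{\alpha_1}}$ sits inside the orbit $\mathbf G^{(\alpha_1)}o$, which Subsection 2.1 identifies as a Helgason sphere, and conclude that the closure fills it out. The paper gets the last step by noting $\mathbf N^-_{s_{\alpha_1}}s_{\alpha_1}o=s_{\alpha_1}\exp(\mathfrak g_{\alpha_1})o$ with $\exp(\mathfrak g_{\alpha_1})o$ dense in the sphere, whereas you use an irreducibility/dimension count; both are fine and the difference is immaterial.
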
 
\begin{proof} As $\alpha_1$ is a simple root, $s_{\alpha_1}$ permutes $\Delta_+\smallsetminus \{ \alpha_1\}$ and maps $\alpha_1$ to $-\alpha_1$, so that 
 \[ N^-_{s_{\alpha_1}} = \exp\big(\mathfrak g_{-\alpha_1}\big)\ .
\]
Let $\mathfrak g^{(\alpha_1)}$ be the Lie algebra (isomorphic to $\mathfrak {sl}(2,\mathbb C)$) generated by $\mathfrak g_{\alpha_1}$ and $\mathfrak g_{-\alpha_1}$, and let $\mathbf G^{(\alpha_1)}$   be the corresponding analytic subgroup of $\mathbf G$. Then as explained in subsection 2.1, the orbit $\mathbf G^{(\alpha_1)}o$ is a Helgason sphere. Now
\[N^-_{s_{\alpha_1}} s_{\alpha_1} o = s_{\alpha_1} \exp\big(\mathfrak g_{\alpha_1}\big) o\ ,
\]
and $\exp\big(\mathfrak g_{\alpha_1}\big) o$ is dense in $\mathbf G^{(\alpha_1)}o$. Hence 
 \[ \overline {X_{s_{\alpha_1}}} =\overline{N^-_{s_{\alpha_1}} s_{\alpha_1} o}= s_{\alpha_1}\mathbf G^{(\alpha_1)} o = \mathbf G^{(\alpha_1)} o\] 
 and the proposition follows.
\end{proof}
Theorem 3.1 can now be formulated in degree 2.
\begin{theorem} The group $H_2(M,\mathbb Z)$ is equal to $\mathbb Z [\overline {X}_{s_{\alpha_1}}]$.
\end{theorem}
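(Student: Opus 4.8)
The plan is to combine the general Schubert-cell description of $H_*(M,\mathbb Z)$ (Theorem 3.1) with the dimension count already carried out in Lemma 3.5. By Theorem 3.1 the classes $[\overline{X_w}]$ for $w\in \mathbf W^\Theta$ form a free $\mathbb Z$-basis of $H_*(M,\mathbb Z)$, and $\overline{X_w}$ has real dimension $2l(w)$ since $X_w$ is a complex variety of complex dimension $l(w)$ by Lemma 3.3 together with the fact that $\dim_{\mathbb C} \mathbf N^-_w = l(w)$. Consequently $H_2(M,\mathbb Z)$ is the free abelian group on those basis elements $[\overline{X_w}]$ with $l(w) = 1$, i.e. with $w$ a simple reflection lying in $\mathbf W^\Theta$.

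The next step is to identify which simple reflections lie in $\mathbf W^\Theta$. This is exactly the content of (the proof of) Lemma 3.5: if $\alpha$ is a simple compact root then $s_\alpha\alpha = -\alpha \prec 0$, so $s_\alpha\Theta\not\subset \Delta^+$ and $s_\alpha\notin\mathbf W^\Theta$; whereas for the unique non-compact simple root $\alpha_1$, the reflection $s_{\alpha_1}$ permutes $\Delta^+\smallsetminus\{\alpha_1\}$ (a standard fact about simple reflections) and in particular sends every positive compact root to a positive root, so $s_{\alpha_1}\Theta\subset\Delta^+$ and $s_{\alpha_1}\in\mathbf W^\Theta$. Hence $s_{\alpha_1}$ is the \emph{only} length-one element of $\mathbf W^\Theta$, and therefore $[\overline{X_{s_{\alpha_1}}}]$ is the unique basis element in degree $2$. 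It follows that $H_2(M,\mathbb Z) = \mathbb Z[\overline{X_{s_{\alpha_1}}}]$, and by Proposition 3.6 this generator is represented by a Helgason sphere isomorphic to $\mathbb{CP}_1$.

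There is really no serious obstacle here: the statement is a direct specialization of Theorem 3.1 to degree $2$, and all the needed ingredients (the basis theorem, the dimension formula $\dim_{\mathbb C}X_w = l(w)$, and the classification of length-one elements of $\mathbf W^\Theta$) have already been assembled in the preceding lemmas. The one point that deserves a sentence of care is the passage from complex dimension to the homological degree — one must note that each $\overline{X_w}$ carries a canonical orientation as a (possibly singular) complex projective variety, so that $[\overline{X_w}]\in H_{2l(w)}(M,\mathbb Z)$ — but this is standard. I would therefore present the argument in three short moves: (i) invoke Theorem 3.1 and record that $[\overline{X_w}]$ sits in degree $2l(w)$; (ii) cite Lemma 3.5 to conclude that $w = s_{\alpha_1}$ is the only admissible length-one element; (iii) conclude, optionally adding the geometric identification via Proposition 3.6.
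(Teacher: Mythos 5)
Your proof is correct and follows exactly the route the paper intends: the theorem is stated there as an immediate specialization of Theorem 3.1 to degree 2, using Lemma 3.5 to single out $s_{\alpha_1}$ as the unique length-one element of $\mathbf W^\Theta$. Your added remark about the canonical fundamental class of the complex variety $\overline{X_w}$ sitting in degree $2l(w)$ is a reasonable point of care that the paper leaves implicit.
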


\section{The symplectic area of geodesic triangles}

\subsection{The normalized K\"ahler form on $M$}

On $M$ there is a canonical  K\"ahler form $\omega$. As $M$ is assume to be irreducible, $\omega$ is unique up to a constant. The form $\omega$ is related to the invariant Hermitian metric $q$ on $M$ by the relation
\[\omega_m(X,Y)=q_m(JX,Y)
\]
for $X,Y\in T_mM$. As will appear in the sequel, it is preferable to use a different normalization, chosen such that the sectional holomorphic curvature has maximal value $1$. It is sufficient to choose the normalization of $q_o$, and under the identification of $T_oM\simeq i\mathfrak p_0$, the normalization is given by
\begin{equation}\widetilde q_0 = \frac{2}{p}q_0\ ,
\end{equation}
where $p= (r-1)a+b+2$. As similar computations for the dual space $M^d$  were carefully done in \cite{co2}, we skip the details. The maximum of the scalar curvature is reached for the tangent space to the complex Riemann spheres $S^{(j)}$ associated to any root $\gamma_j$ and, as a particular case to the sphere called $\overline {X}_{s_{\alpha_1}}$ in section 3. Notice that the restriction of the renormalized Hermitian metric (and consequently of the renormalized K\"ahler form $\widetilde \omega = \frac{2}{p} \omega$) to such a sphere coincides with the usual metric (resp. area form) on the sphere $\mathbb C\mathbb P_1\simeq S^2$. The following proposition is an immediate consequence of this remark.
\begin{proposition}\label{intS}
 Let $S\simeq \mathbb C\mathbb P_1$ be a Helgason sphere. Then
\begin{equation}
\int_S \widetilde \omega= 4\pi\ .
\end{equation}
\end{proposition}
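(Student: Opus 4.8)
The plan is to reduce the statement to the elementary computation of the area of $\mathbb{C}\mathbb{P}_1$ with its Fubini--Study metric normalized to have constant holomorphic sectional curvature $1$. By the preceding discussion, the restriction of $\widetilde\omega$ to a Helgason sphere $S\simeq\mathbb{C}\mathbb{P}_1$ is the area form of the round sphere $S^2$ whose Gaussian curvature equals the maximal holomorphic sectional curvature, which has been normalized to be $1$. A round sphere of constant curvature $1$ is the unit sphere, whose total area is $4\pi$. Thus $\int_S\widetilde\omega = \int_{S^2}dA = 4\pi$.

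In more detail, first I would recall the correspondence between the holomorphic sectional curvature of a K\"ahler manifold in complex dimension $1$ and the Gaussian curvature of the underlying real surface: for a complex curve they coincide. Hence normalizing the metric so that the maximal holomorphic sectional curvature is $1$ forces the induced metric on $S$ to be the metric of Gaussian curvature $1$, i.e.\ the standard metric on the unit $2$-sphere (up to isometry). Second, I would invoke the Gauss--Bonnet theorem (or simply the known area of the unit sphere): since $S$ is diffeomorphic to $S^2$, $\chi(S)=2$, and $\int_S K\,dA = 2\pi\chi(S) = 4\pi$; with $K\equiv 1$ this gives $\int_S dA = 4\pi$. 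Since $dA$ is exactly the restriction of $\widetilde\omega$, the proposition follows. All Helgason spheres are conjugate under the isometry group, which acts by holomorphic isometries preserving $\widetilde\omega$, so the value is independent of the chosen sphere.

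There is no real obstacle here; the proposition is essentially a normalization bookkeeping statement, and the only thing to be careful about is consistency of conventions — that the factor $\frac{2}{p}$ with $p=(r-1)a+b+2$ is precisely the one that rescales $q_0$ so that the maximal holomorphic sectional curvature becomes $1$ rather than some other constant. Since the analogous computation for the dual noncompact space $M^d$ was carried out in \cite{co2}, I would simply cite that reference for the curvature normalization and for the identification of the restricted metric on $S^{(j)}$ with the standard one, and then conclude with the elementary area computation above.
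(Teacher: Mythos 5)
Your proof is correct and follows essentially the same route as the paper: the authors likewise observe that the normalization $\widetilde q = \frac{2}{p}q$ makes the restriction of $\widetilde\omega$ to a Helgason sphere the standard area form of the unit $2$-sphere (citing \cite{co2} for the curvature computation), from which the value $4\pi$ is immediate. Your additional remarks on Gauss--Bonnet and conjugacy of Helgason spheres are consistent elaborations of the same argument.
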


A triplet $(p_1,p_2,p_3)$ of points in $M$ is said to be \emph{regular} if there exists a unique  minimizing geodesic segment between any two of the three points. Given a regular triplet $(p_1,p_2,p_3)$ we can form without ambiguity the oriented triangle $T(p_1,p_2,p_3)$, by joining $p_1$ to $p_2$ via the unique minimizing geodesic from $p_1$ to $p_2$ and similarly for $(z_1,z_2)$ and $(p_3,p_1)$. Let $\Sigma=\Sigma(p_1,p_2,p_3)$ be any $2$-simplex whose boundary is the oriented triangle $T(p_1,p_2,p_3)$. 
Then define 
\[A\big(\Sigma\big) = \int_{\Sigma(p_1,p_2,p_3)} \widetilde \omega \ .
\]
Now, by Stokes theorem, as $\omega$ is a closed form, this integral does not change if the simplex $\Sigma$ is changed in a smooth way. However, as $\omega$ is not exact, the corresponding global statement is not true. 

\begin{theorem} Let two 2-simplices $\Sigma_1$ and $\Sigma_2$, both having $T(p_1,p_2,p_3)$ as boundary. Then
\[A(\Sigma_1) \equiv A(\Sigma_2) \mod 4\pi\ .
\]
\end{theorem}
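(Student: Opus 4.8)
The plan is to reduce the statement to the homological fact established in Section 3, namely that $H_2(M,\mathbb Z)=\mathbb Z\,[\overline{X}_{s_{\alpha_1}}]$ together with $\overline{X}_{s_{\alpha_1}}$ being a Helgason sphere, and to pair this with the integrality computation $\int_S\widetilde\omega=4\pi$ from Proposition \ref{intS}. First I would observe that $\Sigma_1$ and $\Sigma_2$ both carry the same oriented boundary $T(p_1,p_2,p_3)$, so after reversing the orientation of $\Sigma_2$ the chain $\Sigma_1-\Sigma_2$ is a $2$-cycle in $M$; since $M$ is a smooth closed manifold this cycle defines a class $[\Sigma_1-\Sigma_2]\in H_2(M,\mathbb Z)$. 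By Theorem 3.2 this class equals $n\,[\overline{X}_{s_{\alpha_1}}]$ for some integer $n$.

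Next I would invoke that $\omega$ (hence $\widetilde\omega$) is a closed $2$-form, so integration of $\widetilde\omega$ descends to a linear functional on $H_2(M,\mathbb R)$; concretely, $\int_{\Sigma_1}\widetilde\omega-\int_{\Sigma_2}\widetilde\omega=\int_{\Sigma_1-\Sigma_2}\widetilde\omega$ depends only on the homology class $[\Sigma_1-\Sigma_2]$. (To be fully careful one should smooth the chain $\Sigma_1-\Sigma_2$ near the common boundary into a genuine cycle or closed surface, or argue via de Rham's theorem that the period pairing between closed forms and singular cycles is well defined and that cycles homologous to zero have zero period; this is standard and I would only sketch it.) Therefore
\[
A(\Sigma_1)-A(\Sigma_2)=\int_{\Sigma_1-\Sigma_2}\widetilde\omega = n\int_{\overline{X}_{s_{\alpha_1}}}\widetilde\omega\ .
\]
Finally, by Proposition \ref{sphere} the Schubert variety $\overline{X}_{s_{\alpha_1}}$ is a Helgason sphere, so Proposition \ref{intS} gives $\int_{\overline{X}_{s_{\alpha_1}}}\widetilde\omega=4\pi$, whence $A(\Sigma_1)-A(\Sigma_2)=4\pi n\in 4\pi\mathbb Z$, which is exactly the assertion $A(\Sigma_1)\equiv A(\Sigma_2)\bmod 4\pi$.

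The main obstacle I anticipate is the technical point that $\Sigma_1-\Sigma_2$ is a priori only a singular $2$-chain with cancelling boundary rather than an honestly embedded closed surface, so one must justify (i) that it legitimately represents a class in $H_2(M,\mathbb Z)$ and (ii) that $\int_{\Sigma_1-\Sigma_2}\widetilde\omega$ coincides with the period of $\widetilde\omega$ against that class. For (i) one can glue $\Sigma_1$ to the orientation-reversal of $\Sigma_2$ along $T(p_1,p_2,p_3)$, noting that geodesic edges are smooth away from the three vertices and that the resulting topological $2$-cycle is smoothable; for (ii) one appeals to the de Rham isomorphism, under which the pairing $\langle[\widetilde\omega],[\,\cdot\,]\rangle$ is exactly integration of the form over representatives, so that boundaries (and more generally exact chains) contribute nothing by Stokes' theorem. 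Once this bookkeeping is in place the computation of the period is immediate from the results already proved, so I would keep this part brief and refer to standard algebraic-topology references for the smoothing and de Rham facts.
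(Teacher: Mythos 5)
Your proposal is correct and follows essentially the same argument as the paper: form the $2$-cycle $\Sigma_1-\Sigma_2$, identify its class as $n[\overline{X}_{s_{\alpha_1}}]$ in $H_2(M,\mathbb Z)$, and evaluate the period of $\widetilde\omega$ using Proposition \ref{intS}. The additional care you take with smoothing the chain and the de Rham pairing is a reasonable elaboration of steps the paper leaves implicit.
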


\begin{proof}
Consider two simplices $\Sigma_1$ and $\Sigma_2$ whose boundary is the oriented triangle $T(p_1,p_2,p_3)$. Then the boundary of $\Sigma_1$ and $\Sigma_2$ is the same, which we can translate in homological terms 	as
\[\partial\big(\Sigma_1 - \Sigma_2 \big)=0,\]
where $\partial$ is the boundary operator. Hence $\Sigma_1 - \Sigma_2$ is a 2-cycle with integer coefficients, which by our computation of $H_2(M,\mathbb Z)$ implies that the corresponding homology class is given by\[[\Sigma_1 -\Sigma_2] = n [S]
\]
for some $n\in \mathbb Z$. Hence, as $\omega$ is closed,
\[\int_{\Sigma_1} \widetilde \omega -\int_{\Sigma_2}\widetilde  \omega = n \int_S \widetilde \omega = n (4\pi)\ ,
\]
and the statement of the theorem follows.
\end{proof}

For $(p_1,p_2,p_3)$ a regular triplet, form the geodesic triangle $\mathcal T(p_1,p_2,p_3)$. Let $\Sigma$ be a 2-simplex whose boundary is $\mathcal T(p_1,p_2,p_3)$. Then the quantity
$ \displaystyle e^{\frac{i}{2} \int_\Sigma \widetilde \omega}$ 
is independent of the simplex $\Sigma$ and thus defines a $U(1)$-valued function $\Psi(p_1,p_2,p_3)$ depending only of $(p_1,p_2,p_3)$.

\begin{theorem} The function $\Psi$ defined on regular triplets by
\[
\Psi(p_1,p_2,p_3) = e^{\frac{i}{2} \int_{\Sigma} \widetilde \omega}
\]
where $\Sigma$ is a 2-simplex whose boundary is the oriented geodesic triangle $T(p_1,p_2,p_3)$ satisfies the following properties

$i)$ for any permutation $\sigma$ of $\{1,2,3\}$
\[\Psi\big(z_{\sigma(0)}, z_{\sigma(1)}, z_{\sigma(2)}\big) = \Psi\big(z_0,z_1,z_2\big)^{\epsilon(\sigma)},\] where $\epsilon(\sigma)$ is the signature of the permutation $\sigma$.

$ii)$ for any quadruplet $(p_0,p_1,p_2,p_3)$ such that the four triplets $(p_0,p_1,p_2)$, $(p_1,p_2,p_3)$, $ (p_2,p_3,p_0)$ and $(p_3,p_0,p_1)$ are regular\[\Psi(p_0,p_1,p_2)  \Psi(p_1,p_2, p_3)^{-1} \Psi(p_2,p_3,p_0) \Psi(p_3,p_0,p_1)^{-1} = 1\ .
\]
\end{theorem}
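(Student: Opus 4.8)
The plan is to derive both assertions from the facts established in Section 3, namely that $H_2(M,\mathbb Z)=\mathbb Z[S]$ for a Helgason sphere $S$ (Theorem 3.2) and that $\int_S\widetilde\omega=4\pi$ (Proposition \ref{intS}); the factor $\tfrac i2$ in the definition of $\Psi$ is precisely what converts the resulting $4\pi\mathbb Z$ of periods into $2\pi i\mathbb Z$, so those periods disappear after exponentiation. Throughout, for two points $p_i,p_j$ of a regular triplet write $[p_ip_j]$ for the oriented minimizing geodesic segment from $p_i$ to $p_j$; by regularity it is well defined and $[p_jp_i]=-[p_ip_j]$, and the oriented boundary $1$-chain of a geodesic triangle is $T(p_i,p_j,p_k)=[p_ip_j]+[p_jp_k]+[p_kp_i]$.

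For $i)$, observe first that regularity of a triplet is symmetric, so the permuted triplet is again regular. Inspecting the three-term sum above under the action of $S_3$ gives $T(p_{\sigma(1)},p_{\sigma(2)},p_{\sigma(3)})=\epsilon(\sigma)\,T(p_1,p_2,p_3)$: cyclic permutations leave the loop unchanged, while a transposition reverses its orientation. Hence if $\Sigma$ is a $2$-chain with $\partial\Sigma=T(p_1,p_2,p_3)$, then $\epsilon(\sigma)\Sigma$ bounds the permuted triangle, so
\[\Psi(p_{\sigma(1)},p_{\sigma(2)},p_{\sigma(3)})=e^{\frac i2\,\epsilon(\sigma)\int_\Sigma\widetilde\omega}=\Psi(p_1,p_2,p_3)^{\epsilon(\sigma)},\]
where we used that $e^{\frac i2\int_\Sigma\widetilde\omega}$ does not depend on the chosen filling.

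For $ii)$, the idea is to package the obstruction as a closed $2$-cycle through the relation $\partial\partial=0$ for a tetrahedron on the four points. Each of the six edges among $p_0,p_1,p_2,p_3$ lies on one of the four regular triangles, so all six carry a unique minimizing geodesic; since $M$ is simply connected we may choose $2$-chains $\Sigma_{012},\Sigma_{123},\Sigma_{230},\Sigma_{301}$ whose boundaries are $T(p_0,p_1,p_2)$, $T(p_1,p_2,p_3)$, $T(p_2,p_3,p_0)$, $T(p_3,p_0,p_1)$ respectively. Put $Z=\Sigma_{123}-\Sigma_{230}+\Sigma_{301}-\Sigma_{012}$. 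Expanding each triangle boundary into oriented edges and using $[p_jp_i]=-[p_ip_j]$, the twelve edge contributions to $\partial Z$ cancel in pairs — this is exactly the signed incidence pattern of the faces of the $3$-simplex $[p_0p_1p_2p_3]$ — so $\partial Z=0$ and $Z$ is a $2$-cycle.

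By $H_2(M,\mathbb Z)=\mathbb Z[S]$ we then have $[Z]=n[S]$ for some $n\in\mathbb Z$, hence $\int_Z\widetilde\omega=n\int_S\widetilde\omega=4\pi n$ and $e^{\frac i2\int_Z\widetilde\omega}=e^{2\pi i n}=1$. On the other hand, $\widetilde\omega$ being closed, the integral over $Z$ splits over the four pieces, which gives
\[\Psi(p_1,p_2,p_3)\,\Psi(p_2,p_3,p_0)^{-1}\,\Psi(p_3,p_0,p_1)\,\Psi(p_0,p_1,p_2)^{-1}=1,\]
and taking inverses (all factors lie in the abelian group $U(1)$) yields the asserted relation. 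The one genuinely delicate step is the sign bookkeeping proving $\partial Z=0$, i.e.\ matching the four triangles $T(p_i,p_j,p_k)$ with the faces of $[p_0p_1p_2p_3]$ and checking the pairwise cancellation of edges; once this and the inputs from Section 3 are in place, the rest is formal.
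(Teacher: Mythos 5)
Your proposal is correct and follows essentially the same route as the paper: part $i)$ via the orientation of the boundary triangle under permutations, and part $ii)$ by forming the alternating sum of the four filling $2$-chains, noting it is a $2$-cycle (the $\partial\partial=0$ pattern for the tetrahedron), and using $H_2(M,\mathbb Z)=\mathbb Z[S]$ together with $\int_S\widetilde\omega=4\pi$ so that the period vanishes after exponentiation. Your explicit check of the pairwise edge cancellation (relying on regularity so that each edge's minimizing geodesic is shared by the two adjacent faces) is a slightly more detailed version of the step the paper states without elaboration.
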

\begin{proof}
For $i)$, observe that the triangle $\mathcal T\big(p_{\sigma(0)}, p_{\sigma(1)},p_{\sigma(2)}\big)$ has the same orientation as $T(p_0,p_1,p_2)$ when $\sigma$ is an even permutation and the opposite orientation if $\sigma$ is an odd permutation.
Hence $\Sigma$ is a 2-simplex with boundary equal to $\mathcal T\big(p_{\sigma(0)}, p_{\sigma(1)},p_{\sigma(2)}\big)$ if $\sigma$ is even and to its opposite if $\sigma$ is odd. The values of $\Psi\big(p_{\sigma(0)}, p_{\sigma(1)}, p_{\sigma(2)}\big)$ is equal to $\Psi\big(p_0,p_1,p_2\big)$ if $\sigma$ is even and to $\Psi\big(p_0,p_1,p_2\big)^{-1}$ if $\sigma$ is odd.

For $ii)$ choose four 2-simplices $\Sigma(p_0,p_1,p_2), \dots, \Sigma(p_3,p_0,p_1)$ such that the boundary of $\Sigma(p_0,p_1,p_2)$ is equal to the oriented triangle $\mathcal T(p_0,p_1,p_2)$, \dots , the boundary of $\Sigma(p_3,p_0,p_1)$ is equal to the oriented triangle $\mathcal T(p_3,p_0,p_1)$. Then observe that 
\[\partial\big(\Sigma(p_0,p_1,p_2)-\Sigma(p_1,p_2,p_3) +\Sigma(p_2,p_3,p_0) -\Sigma(p_3,p_0,p_1)\big) =0\ .
\]
Hence
\[\int_{\Sigma(p_0,p_1,p_2)}\!\!\!\!\!\!\!\!\widetilde \omega- \int_{\Sigma(p_1,p_2,p_3)} \!\!\!\!\!\!\!\!\widetilde \omega+\int_{\Sigma(p_2,p_3,p_0)} \!\!\!\!\!\!\!\!\widetilde \omega -\int_{\Sigma(p_3,p_0,p_1)}\!\!\!\!\!\!\!\!\widetilde \omega\equiv 0 \mod 4\pi\ .
\]
The identity $ii)$ follows from this last equation.

\end{proof}

\section{The canonical kernel and the K\"ahler potential}

The symplectic form $\omega$ on $M$ is closed, but not exact. Hence, it is not possible to find  a $1$-form $\rho$ such that $\omega = d\rho$ and hence it is also impossible to find a global K\"ahler potential, i.e. a function $k(z)$ on $M$ such that $\omega = i\partial \overline{\partial} k$. We are forced to restrict to an open set of $M$, which is topologically trivial. 
It seems reasonable to use the chart $(\Xi, \mathfrak p_+)$. The procedure 
leading to the construction of the K\"ahler potential follows closely the construction in the non-compact case, as presented in \cite{s}, chapter II.

\subsection{The canonical kernel for the compact space}

The main ingredient is the \emph{automorphy kernel}. We first recall its definition  in the case of a noncompact Hermitian symmetric domain. We essentially follow \cite{s}.

Let $\sigma$ be the involution of $\mathfrak g$ with respect to $\mathfrak g_0=\mathfrak k_0\oplus \mathfrak p_0$ and denote by $\sigma$ its lift to $\mathbf G$. 

The map
 \[ \mathbf P_+\times\mathbf K\times \mathbf P_-\ni(p_+,k,p_-)\quad\longmapsto \quad p_+kp_-
\]
is a diffeomorphism on a dense open subset of $\mathbf G$, and we write the inverse map as
$g=g_+\,g_0\,g_-$ when 
$g\in \mathbf P_+\mathbf K\mathbf P_-$. We can define a partial action of $\mathbf G$ on $\mathfrak p_+$ by
\[g(z) = \big(g \exp(z)\big)_+,
\]
which is nothing else than the expression in the chart $(\Xi, \mathfrak p_+)$ of  the action of $\mathbf G$ on $M$. Where defined, the differential $J(g,z)$ of the map $z\longmapsto  g(z)$ is given by
\[J(g,z) = \big(g\exp(z)\big)_0\ .
\]

For $z,w\in \mathfrak p_+$ such that
\[\exp(-\sigma w)\exp(z) \in \mathbf P_+\mathbf K\mathbf P_-
\]
 the \emph{canonical automorphy kernel} $K(z,w)$ is defined by 
\[K(z,w) = J(\exp(-\sigma w), z)^{-1} = \big(\exp(-\sigma w)\exp(z)\big)_0^{-1}\ .
\]
To pass to the case of a compact Hermitian symmetric space, we replace $\sigma$ by $\tau$. 

For $z,w$ in $\mathfrak p_+$ such that $\exp(-\tau w) \exp(z)$ belongs to 
$\mathbf P_+\mathbf K\mathbf P_-$, define the \emph{ compact automorphy kernel} $K_c(z,w)$ as
\[K_c(z,w) = \big(\exp(-\tau w) \exp(z)\big)_0^{-1}\ .
\]
An elementary but crucial observation is that on $\mathfrak p^+$, $\tau$ and $\sigma$ differ by a sign.  So for $z,w\in \mathfrak p_+$  where $K_c$ is defined,
\[K_c(z,w) = K(z,-w)\ .
\]
The results to follow are stated without proofs, as they are mostly consequence of this remark and can be deduced from the analogous result in the non-compact case.
\begin{proposition}
 Let $(z,w)\in \mathfrak p_+$ such that $K_c$ is defined at $(z,w)$ and let $g\in \mathbf G$ such that $g$ is defined at $z$ and $\tau(g)$ is defined at $w$. Then $K_c$ is defined at $\big(g(z),\tau(g)(w)\big)$ and
 \[K_c\big(g(z), \tau(g)(w)\big) = J(g,z)K_c(z,w) \tau\big( J(\tau(g),w)\big)^{-1}\ .
 \]
\end{proposition}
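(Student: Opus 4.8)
The plan is to reduce this transformation law for the compact kernel $K_c$ to the corresponding, already-known transformation law for the non-compact automorphy kernel $K$, using the key identity $K_c(z,w) = K(z,-w)$ noted just above the proposition. First I would recall (from \cite{s}, chapter II) the cocycle identity in the non-compact setting: for $g\in\mathbf G$ with $g$ defined at $z$ and $\sigma(g)$ defined at $w$, one has $K(g(z),\sigma(g)(w)) = J(g,z)\,K(z,w)\,\sigma\bigl(J(\sigma(g),w)\bigr)^{-1}$. This is proved purely formally from the definitions $K(z,w) = (\exp(-\sigma w)\exp(z))_0^{-1}$, $J(g,z) = (g\exp(z))_0$, together with the multiplicativity of the $(\,\cdot\,)_0$-component on the big cell $\mathbf P_+\mathbf K\mathbf P_-$ and the fact that $\sigma$ is an involutive automorphism of $\mathbf G$ preserving $\mathbf P_\pm$ and $\mathbf K$ appropriately.

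Next I would exploit the fact that on $\mathfrak p_+$ the two conjugations $\tau$ and $\sigma$ differ only by a sign: if $z\in\mathfrak p_+$ then $\tau z = -\sigma z$, and more generally, since $\theta = \sigma\tau = \tau\sigma$ is the Cartan involution whose action on $\mathfrak p$ is $-\mathrm{id}$, we have $\tau = \sigma\circ\theta$ as automorphisms of $\mathbf G$. Thus for $g\in\mathbf G$, $\tau(g) = \sigma(\theta(g))$, and I would track carefully how the partial action and the automorphy factor behave under this twist: $\tau(g)$ being defined at $w$ corresponds, via $w\mapsto -w$, to $\sigma(g')$ being defined at $-w$ for a suitable $g'$, and $J(\tau(g),w)$ relates to $J(\sigma(g'),-w)$ by the same sign flip on $\mathfrak p_+$. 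Substituting $w\mapsto -w$ and $g\mapsto g$ into the non-compact identity, and using $K_c(\,\cdot\,,\,\cdot\,) = K(\,\cdot\,,-\,\cdot\,)$ on both sides, should produce exactly the claimed formula $K_c(g(z),\tau(g)(w)) = J(g,z)\,K_c(z,w)\,\tau\bigl(J(\tau(g),w)\bigr)^{-1}$, after checking that $\sigma(g'(-w)) = -\tau(g)(w)$ so that the left-hand sides match.

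The domain-of-definition bookkeeping is the only genuinely delicate point: one must verify that the hypotheses ``$g$ defined at $z$'' and ``$\tau(g)$ defined at $w$'' together with ``$K_c$ defined at $(z,w)$'' translate precisely into the hypotheses of the non-compact identity after the substitution $w\mapsto -w$, and that consequently $K_c$ is automatically defined at $(g(z),\tau(g)(w))$ — i.e. that $\exp(-\tau(\tau(g)(w)))\exp(g(z))$ lands in the big cell $\mathbf P_+\mathbf K\mathbf P_-$. This follows from the non-compact statement, where $K$ being defined at the transformed pair is part of the conclusion, but one has to make sure the sign twists are consistent throughout; this is the main obstacle, though it is entirely formal. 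The rest is the observation, already emphasized in the excerpt, that every ingredient ($J$, the $(\,\cdot\,)_0$ projection, the multiplicativity on the big cell) is insensitive to the replacement of $\sigma$ by $\tau$ except through this uniform sign on $\mathfrak p_+$, so no new computation beyond the non-compact case is required.
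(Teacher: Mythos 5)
Your proposal is correct and follows exactly the route the paper intends: the paper states this proposition without proof, remarking only that it (like the neighbouring results) is a consequence of the identity $K_c(z,w)=K(z,-w)$ together with the known covariance of the non-compact automorphy kernel from \cite{s}, which is precisely your reduction. Your sign-tracking via $\tau=\sigma\circ\theta$, with $\theta$ acting as $-\mathrm{id}$ on $\mathfrak p_+$ and trivially on $\mathbf K$, and the attendant domain bookkeeping are the right (and only) points that need checking.
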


The automorphy kernel allows to express the Riemannian metric $q$ on $M$ in the chart $(\Xi,\mathfrak p_+)$.
\begin{proposition} For $z\in \mathfrak p_+, \zeta, \eta\in \mathfrak p_+$
\[q_z(\zeta, \eta) = -\frac{1}{2} B\big(\Ad K_c(z,z)^{-1} \zeta,\tau\eta\big)\ .
\]
\end{proposition}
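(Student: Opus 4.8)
The plan is to reduce everything to the analogous statement for the non-compact dual, which is contained in \cite{s}, Chapter II, via the sign relation $K_c(z,w)=K(z,-w)$ already recorded in the excerpt. First I would recall the formula in the non-compact case: the Bergman-type metric at $z\in\mathfrak p_+$ is $q_z(\zeta,\eta)=-\tfrac12 B\big(\Ad K(z,z)^{-1}\zeta,\sigma\eta\big)$ for $\zeta,\eta$ tangent vectors at $z$, identified with elements of $\mathfrak p_+$ through the chart $(\Xi,\mathfrak p_+)$. The point is that this identity is an identity between holomorphic/anti-holomorphic data attached to $\exp(-\sigma w)\exp(z)$ and its Gauss-type factorization; it has meaning, by analytic continuation in $w$, on the open set where the factorization is defined, and in particular one may substitute $-w$ for $w$. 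Since on $\mathfrak p_+$ we have $\tau=-\sigma$ (so $\sigma\eta$ and $\tau\eta$ differ only in the relevant place by the substitution $\eta\mapsto$ the conjugate variable, i.e. exactly the change $w\mapsto -w$ applied at $z=w$), replacing $\sigma$ by $\tau$ and $K$ by $K_c$ turns the non-compact formula into the asserted compact one.

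The key steps, in order, are: (1) state the non-compact identity $q^d_z(\zeta,\eta)=-\tfrac12 B(\Ad K(z,z)^{-1}\zeta,\sigma\eta)$ as in \cite{s}; (2) observe that both sides are (real-)analytic in the pair $(z,w)$ where defined, so the defining relation $K(z,w)=\big(\exp(-\sigma w)\exp(z)\big)_0^{-1}$ and its consequences persist after the substitution $w\mapsto -w$, giving $K_c(z,z)=\big(\exp(-\tau z)\exp(z)\big)_0^{-1}$; (3) check that the Hermitian form $q$ on $i\mathfrak p_0\simeq T_oM$ defined in Section 1 by $q(X,Y)=-\tfrac12 B(X,\tau Y)$ transported to $z$ by the $\mathbf G$-action is precisely $-\tfrac12 B(\Ad K_c(z,z)^{-1}\zeta,\tau\eta)$; for this I would use the transformation rule $K_c(g(z),\tau(g)(w))=J(g,z)K_c(z,w)\tau(J(\tau(g),w))^{-1}$ from the previous Proposition together with the fact that at $z=0$ one has $K_c(0,0)=e$ and the form reduces to $q$ at the origin, exactly as in the non-compact case; (4) use $\Xi$-equivariance to move from the origin to a general $z$, invoking \eqref{aKp} if one wants to reduce further to the torus directions. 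Since the Proposition is explicitly stated ``without proof, as a consequence of the analogous non-compact result,'' a short paragraph carrying out (1)--(4) at the level of precision of \cite{s} is all that is needed.

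The main obstacle is purely bookkeeping: making sure the identification of the tangent space $T_zM$ with $\mathfrak p_+$ used in the statement is the one induced by the chart $\Xi$ (i.e. by left translation by $\exp z$ acting on $\mathfrak p_+$), and that the factor $J(g,z)=\big(g\exp z\big)_0$ intertwines this identification at $0$ and at $z$ correctly, so that the $\Ad K_c(z,z)^{-1}$ really is the Gram matrix of the metric in the holomorphic frame $\partial/\partial z_j$. One must also be slightly careful that the non-compact formula is valid verbatim after the substitution $w\mapsto -w$ only on the (smaller) open set where $\exp(-\tau w)\exp(z)\in\mathbf P_+\mathbf K\mathbf P_-$, which by Proposition \ref{bigcell} is exactly the locus avoiding the first conjugate locus; but since the diagonal $z=w$ with $z\in\mathfrak p_+$ always lies in this locus (it corresponds to a point of the big cell), the formula holds for all $z\in\mathfrak p_+$, which is what is claimed. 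No genuinely new computation beyond \cite{co2} and \cite{s} is required.
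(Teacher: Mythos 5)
Your plan matches the paper's approach exactly: the proposition is stated there without proof, precisely as a consequence of the sign relation $K_c(z,w)=K(z,-w)$ on $\mathfrak p_+$ together with the analogous non-compact formula from \cite{s}, which is the reduction you carry out, and your steps (3)--(4) (normalization $K_c(0,0)=e$ at the origin plus equivariance under the covariance rule) are the right way to make the substitution rigorous rather than a literal swap of metrics between $M$ and $M^d$. The one point to tighten is that the transport from the origin must be effected by elements of $U$ (the actual isometries), for which $\tau(g)=g$ so that the covariance formula preserves the diagonal $z=w$; your invocation of ``the $\mathbf G$-action'' should be read in that restricted sense, as your use of the transformation rule already implicitly does.
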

\begin{proposition} Let $(z,w)\in \mathfrak p_+$ such that $K(z,w)$ is defined. Then on $\mathfrak p_+$
\[\Ad_{\mathfrak p^+} K_c(z,w) = \id -\ad[z,\tau w]+\frac{1}{4} (\ad z)^2(\ad\tau w)^2\ .
\]
\end{proposition}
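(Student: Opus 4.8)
The plan is to work entirely inside the parabolic picture $\mathbf G/\mathbf K\mathbf P_-$ and reduce the identity to a computation with nilpotent operators on $\mathfrak p_+$. First I would recall that $K_c(z,w)=\bigl(\exp(-\tau w)\exp(z)\bigr)_0^{-1}$, an element of $\mathbf K$, so $\Ad_{\mathfrak p_+}K_c(z,w)$ makes sense as an operator on $\mathfrak p_+$; and that, by the observation $K_c(z,w)=K(z,-w)$, it suffices to prove the corresponding formula for the noncompact kernel $K(z,w)$, namely
\[
\Ad_{\mathfrak p_+}K(z,w)=\id+\ad[z,\sigma w]+\tfrac14(\ad z)^2(\ad\sigma w)^2
\]
(the sign flip $w\mapsto -w$ changing $\sigma w$ into $-\sigma w=\tau w$ on $\mathfrak p_+$ and turning the $+$ into $-$ in the linear term while leaving the quadratic term unchanged). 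So I would reduce once and for all to the noncompact setting, where the relevant Gauss-type decomposition $\exp(-\sigma w)\exp(z)=p_+\,k\,p_-$ is standard.

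The second step is to make the decomposition explicit enough to read off $k=\bigl(\exp(-\sigma w)\exp(z)\bigr)_0$. Here I would use the abelianness of $\mathfrak p_\pm$ and the bracket relations $[\mathfrak k,\mathfrak p_\pm]\subset\mathfrak p_\pm$, $[\mathfrak p_+,\mathfrak p_-]\subset\mathfrak k$, together with the fact that $\sigma w\in\mathfrak p_-$ (since $\sigma$ interchanges $\mathfrak p_+$ and $\mathfrak p_-$). Thus I must factor $\exp(-\sigma w)\exp(z)$, with $-\sigma w\in\mathfrak p_-$ and $z\in\mathfrak p_+$; this is exactly the commutation of a lower-triangular with an upper-triangular exponential, and because $\mathfrak p_\pm$ are abelian the relevant subalgebra $\mathfrak g(z,\sigma w)$ they generate is finite-dimensional and (after bracketing) essentially an $\mathfrak{sl}_2$-like graded algebra. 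The cleanest route is to move to a faithful representation: embed $\mathfrak g\hookrightarrow\mathfrak{gl}(V)$ for a suitable finite-dimensional $V$ in which $\ad H_0$ gives a $\mathbb Z$-grading $V=V_{-1}\oplus V_0\oplus V_1$ with $\mathfrak p_+$ acting by strictly upper, $\mathfrak p_-$ by strictly lower block-triangular matrices; then $\exp(z)$ and $\exp(-\sigma w)$ are genuine unipotent block matrices, one can perform the $LU$-type block factorization by hand, and the middle (block-diagonal) factor is literally $\mathrm{diag}\bigl((\id_{V_1}),\,(\id-z\sigma w)|_{V_0}\text{-type terms},\,\dots\bigr)$ — in any case an expression polynomial in $\ad z$ and $\ad\sigma w$. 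Conjugating $\mathfrak p_+$ (which sits in $\Hom(V_0,V_1)$, say) by this middle factor produces the stated operator. Alternatively — and this is what I would actually write if space is short — I would simply differentiate: apply the cocycle property $J(g_1g_2,z)=J(g_1,g_2 z)J(g_2,z)$ and the known first-order behavior $J(\exp\xi,z)=\exp(\ad\xi|_{\mathfrak k})+O(\xi^2)$ for $\xi\in\mathfrak p_-$ to get the linear term, then use that both sides are polynomial of degree $\le 2$ in $z$ and degree $\le 2$ in $w$ (a grading/nilpotency count: $(\ad z)^3=0$ on the relevant space because $\mathfrak p_+$ is abelian and the grading has length $2$ on $\mathfrak g$) to pin down the quadratic term by matching the $SL(2,\mathbb C)$ calculation on each $\mathfrak g^{(k)}$, invoking \eqref{aKp} to propagate from $\mathfrak a_+$ to all of $\mathfrak p_+$.

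I expect the main obstacle to be the second step done honestly: carrying out the block factorization and the conjugation while tracking signs and the precise normalization constant in the $\tfrac14$ coefficient, and verifying that no higher-order terms survive. The nilpotency bound $(\ad z)^3|_{\mathfrak p_+}=0$ for $z\in\mathfrak p_+$ (and likewise for $\sigma w$) is what guarantees the polynomial is exactly of the displayed shape; I would prove it directly from $[\mathfrak p_+,\mathfrak p_+]=0$ and $[\mathfrak p_+,[\mathfrak p_+,\mathfrak p_-]]\subset\mathfrak p_+$ plus the fact that $\mathfrak g=\mathfrak p_-\oplus\mathfrak k\oplus\mathfrak p_+$ has only three graded pieces, so that any expression of total $\mathfrak p_+$-degree $\ge 3$ applied to $\mathfrak p_+$ lands outside $\mathfrak g$ and hence vanishes. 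Once that reduction is in place, the coefficient $\tfrac14$ is forced by the rank-one ($\mathfrak{sl}_2$) model, where the computation is the elementary one already alluded to in the proof of Proposition \ref{bigcell}, and the general case follows by $\Ad K_0$-covariance via \eqref{aKp}. Since the paper explicitly says the results of this subsection are stated without proof and follow from the noncompact case, I would in the final text give only the reduction $K_c(z,w)=K(z,-w)$ together with a one-line pointer to the noncompact computation in \cite{s}, Chapter II, noting the sign change $w\mapsto -w$ accounts for the minus sign in the $\ad[z,\tau w]$ term.
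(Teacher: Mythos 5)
Your strategy is exactly the one the paper intends: the proposition is stated without proof precisely because it reduces, via $K_c(z,w)=K(z,-w)$, to the known noncompact formula in \cite{s}, Ch.~II, and your further remarks on how that noncompact formula is itself established (the nilpotency $(\ad z)^3=0$ coming from the three-step grading $\mathfrak g=\mathfrak p_-\oplus\mathfrak k\oplus\mathfrak p_+$, the Gauss-type factorization, and the reduction to the $\mathfrak{sl}(2)$ model) are all sound. There is, however, a sign slip in the one step you actually carry out. The noncompact formula reads
\[
\Ad_{\mathfrak p_+}K(z,w)=\id-\ad[z,\sigma w]+\tfrac14(\ad z)^2(\ad\sigma w)^2,
\]
with a \emph{minus} sign on the linear term: in the $SL(2,\mathbb C)$ model of Section 5.2 one has $K(z,w)=\operatorname{diag}\big(1-z\overline w,(1-z\overline w)^{-1}\big)$, so $\Ad_{\mathfrak p_+}K(z,w)$ is multiplication by $(1-z\overline w)^2=1-2z\overline w+(z\overline w)^2$, whose linear term is negative. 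Moreover, since $\sigma$ is conjugate-linear, $\sigma(-w)=-\sigma(w)=\tau(w)$ is a \emph{single} sign flip; hence substituting $w\mapsto-w$ and rewriting in terms of $\tau$ replaces $\sigma w$ by $\tau w$ everywhere with no further change of sign, and the compact formula has formally the same shape as the noncompact one. As written, you quote the noncompact formula with $+\ad[z,\sigma w]$ and then flip the sign twice (once for $w\mapsto-w$ and once again when passing from $\sigma w$ to $\tau w$); the two errors cancel, so your final formula agrees with the proposition, but the intermediate noncompact identity you would cite is false and the bookkeeping should be corrected before the reduction is written down.
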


Next let for $z,w\in \mathfrak p_+$
\[k_c(z,w) = \det \Ad_{\mathfrak p_+} K_c(z,w) = k(z,-w)
\]

Define for $(z,w)\in \mathfrak p_+$
\[k_c(z,w) = \det \Ad_{\mathbf p_+} K_c(z,w),
\]
where $\det$ refers to the complex determinant, and observe that the definition makes sense for \emph{all} $(z,w)\in \mathfrak p_+$. The covariance property of the automorphy kernel implies the following covariance property for the kernel $k_c$.
\begin{proposition}Let $z,w\in \mathfrak p_+$ and let $g\in \mathbf G$ such that $g$ id defined at $z$ and $\tau(g)$ is defined at $w$. Then
\begin{equation}\label{covkc}
k_c\big(g(z),\tau(g)(w)\big) = j(g,z)\, k_c(z,w)\,\overline { j\big(\tau(g),w\big)}\ .
\end{equation}

\end{proposition}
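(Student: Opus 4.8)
The plan is to obtain the scalar covariance \eqref{covkc} by applying the complex determinant of the restricted adjoint representation to the operator identity of the preceding proposition. First one records that all three factors on the right-hand side of
\[K_c(g(z),\tau(g)(w)) = J(g,z)\,K_c(z,w)\,\tau(J(\tau(g),w))^{-1}\]
lie in $\mathbf K$: indeed $J(g,z)=(g\exp z)_0$ and $J(\tau(g),w)=(\tau(g)\exp w)_0$ belong to $\mathbf K$ by the Harish--Chandra decomposition, $K_c(z,w)=(\exp(-\tau w)\exp z)_0^{-1}\in\mathbf K$, and $\tau$ preserves $\mathbf K=\mathbf G^\theta$ because $\tau$ and $\theta$ commute, so $\tau(J(\tau(g),w))\in\mathbf K$ as well. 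Since $[\mathfrak k,\mathfrak p_+]\subset\mathfrak p_+$ and $\mathbf K$ is connected ($\mathbf G$ being simply connected), $\mathfrak p_+$ is $\Ad(\mathbf K)$-stable, so $k\mapsto\det_{\mathbb C}\Ad_{\mathfrak p_+}(k)$ is a well-defined character of $\mathbf K$. Applying it to the identity above, using multiplicativity and the definitions $j(g,z)=\det_{\mathbb C}\Ad_{\mathfrak p_+}J(g,z)$ and $k_c=\det_{\mathbb C}\Ad_{\mathfrak p_+}K_c$, yields
\[k_c(g(z),\tau(g)(w)) = j(g,z)\,k_c(z,w)\,\big(\det\nolimits_{\mathbb C}\Ad_{\mathfrak p_+}\tau(J(\tau(g),w))^{-1}\big).\]

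The remaining point is the identity $\det_{\mathbb C}\Ad_{\mathfrak p_+}(\tau(k)^{-1})=\overline{\det_{\mathbb C}\Ad_{\mathfrak p_+}(k)}$ for $k\in\mathbf K$, applied with $k=J(\tau(g),w)$. I would prove it from two structural facts. (i) Since $\tau$ is conjugate-linear and fixes $H_0$, it sends the $i$-eigenspace of $\ad H_0$ to its $(-i)$-eigenspace, so $\tau$ restricts to a conjugate-linear isomorphism $\mathfrak p_+\xrightarrow{\ \sim\ }\mathfrak p_-$; combined with the automorphism relation $\Ad(\tau(k))=\tau\circ\Ad(k)\circ\tau$ this gives $\Ad_{\mathfrak p_+}\tau(k)=\tau|_{\mathfrak p_-}\circ\Ad_{\mathfrak p_-}(k)\circ\tau|_{\mathfrak p_+}$. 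Choosing a $\mathbb C$-basis $(e_j)$ of $\mathfrak p_+$ and the corresponding basis $(\tau e_j)$ of $\mathfrak p_-$, the matrix of $\Ad_{\mathfrak p_+}\tau(k)$ is the entry-wise complex conjugate of the matrix of $\Ad_{\mathfrak p_-}(k)$, so $\det_{\mathbb C}\Ad_{\mathfrak p_+}\tau(k)=\overline{\det_{\mathbb C}\Ad_{\mathfrak p_-}(k)}$. (ii) The Killing form $B$ restricts to an $\Ad(\mathbf K)$-invariant non-degenerate pairing $\mathfrak p_+\times\mathfrak p_-\to\mathbb C$ (each $\mathfrak p_\pm$ being a sum of root spaces for roots that cannot add to zero within one class, hence $B$-isotropic), so $\mathfrak p_-\cong\mathfrak p_+^{*}$ as $\mathbf K$-modules and $\det_{\mathbb C}\Ad_{\mathfrak p_-}(k)=(\det_{\mathbb C}\Ad_{\mathfrak p_+}(k))^{-1}$. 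Combining (i) and (ii) and inverting gives $\det_{\mathbb C}\Ad_{\mathfrak p_+}(\tau(k)^{-1})=\overline{\det_{\mathbb C}\Ad_{\mathfrak p_+}(k)}$, which with $k=J(\tau(g),w)$ turns the last factor into $\overline{j(\tau(g),w)}$ and completes the proof. (Alternatively one may simply reduce to the known non-compact covariance of $k(z,w)$ through $k_c(z,w)=k(z,-w)$ and the fact that $\tau=\sigma$ up to a sign on $\mathfrak p_+$, as announced at the start of this subsection; the argument above is the self-contained version.)

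The main obstacle is the bookkeeping in (i)--(ii): one must extract a genuine complex conjugation from the conjugate-linear $\tau$ — noting that there is no $\Ad(\mathbf K)$-invariant real structure on $\mathfrak p_+$ itself, so the conjugation necessarily arises via the swap $\mathfrak p_+\leftrightarrow\mathfrak p_-$ — and then verify that the two inversions (the one coming from $\tau(k)^{-1}$ and the one coming from the contragredience $\mathfrak p_-\cong\mathfrak p_+^{*}$) cancel, so that the right-hand side is exactly $\overline{j(\tau(g),w)}$ and not its inverse.
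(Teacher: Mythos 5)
Your proof is correct and follows exactly the route the paper intends: the paper states this proposition without proof, indicating only that it follows from the covariance of the operator-valued kernel $K_c$ (equivalently, by reduction to the non-compact case via $K_c(z,w)=K(z,-w)$), and your argument is precisely the passage to $\det_{\mathbb C}\Ad_{\mathfrak p_+}$ of that operator identity. The only content the paper leaves implicit is the identity $\det_{\mathbb C}\Ad_{\mathfrak p_+}(\tau(k)^{-1})=\overline{\det_{\mathbb C}\Ad_{\mathfrak p_+}(k)}$ for $k\in\mathbf K$, which you verify correctly through the conjugate-linear swap $\mathfrak p_+\leftrightarrow\mathfrak p_-$ and the $B$-duality $\mathfrak p_-\simeq\mathfrak p_+^*$.
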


The kernel $k_c$ can be explicitly expressed on $\mathfrak a_+\times \mathfrak a_+$. 
\begin{proposition} Let $z=\sum_{j=1}^r z_jX_j, w= \sum_{j=1}^rw_jX_j$.
Then
\begin{equation}\label{ksuraplus}
k_c(z,w) = \prod_{j=1}(1+z_j\overline w_j)^p\ .
\end{equation}
\end{proposition}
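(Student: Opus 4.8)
The plan is to reduce the computation on $\mathfrak a_+\times\mathfrak a_+$ to $r$ independent rank-one computations, using the fact that $\mathfrak g(\Gamma)\cong\bigoplus_{k=1}^r\mathfrak g^{(k)}$ with the summands mutually commuting, and that each $\mathfrak g^{(k)}\cong\mathfrak{sl}(2,\mathbb C)$. Since $z=\sum_j z_jX_j$ and $w=\sum_j w_jX_j$ lie in $\mathfrak a_+=\bigoplus_k\mathfrak a_+^{(k)}$, the exponentials $\exp(z)$, $\exp(-\tau w)$ factor as commuting products over $k$, so the Harish–Chandra splitting $\exp(-\tau w)\exp(z)=(\,\cdot\,)_+(\,\cdot\,)_0(\,\cdot\,)_-$ is performed blockwise inside each $\mathbf G^{(k)}$. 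Consequently $K_c(z,w)=\prod_k K_c^{(k)}(z_kX_k,w_kX_k)$ with the factors commuting, and $\Ad_{\mathfrak p_+}K_c(z,w)$ is block-diagonal with respect to $\mathfrak p_+=\bigoplus_k\mathfrak p_+^{(k)}$ (using $[\mathfrak g^{(k)},\mathfrak p_+^{(l)}]=0$ for $k\ne l$, which follows from strong orthogonality of the $\gamma_j$). Taking the complex determinant then yields $k_c(z,w)=\prod_{j=1}^r k_c^{(j)}(z_j,w_j)$, where $k_c^{(j)}$ is the compact kernel for the Helgason sphere $S^{(j)}\cong\mathbb{CP}^1$.

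Next I would carry out the single rank-one computation. Fix $j$ and work in $\mathfrak g^{(j)}\cong\mathfrak{sl}(2,\mathbb C)$, where we may take $X_j=X_{\gamma_j}\mapsto\begin{pmatrix}0&1\\0&0\end{pmatrix}$, $X_{-\gamma_j}\mapsto\begin{pmatrix}0&0\\1&0\end{pmatrix}$, and $\tau$ acting so that $\tau(X_j)$ is (a multiple of) the lower-triangular nilpotent; concretely one checks $\exp(-\tau w_j)\exp(z_jX_j)$ is a product of a lower and an upper unitriangular matrix, and the Gauss/LU decomposition gives the torus part with entry involving $(1+z_j\overline{w}_j)$. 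One then identifies $\Ad_{\mathfrak p_+^{(j)}}$ of that torus element acting on the one-complex-dimensional space $\mathfrak p_+^{(j)}=\mathbb C X_{\gamma_j}$: since $\gamma_j$ is the relevant weight, the scalar is a power of $(1+z_j\overline w_j)$. The exponent is pinned down by comparison with Proposition 5.3 (the formula $\Ad_{\mathfrak p_+}K_c=\id-\ad[z,\tau w]+\tfrac14(\ad z)^2(\ad\tau w)^2$) restricted to the sphere, or equivalently by the normalization $[X_{\gamma_j},X_{-\gamma_j}]=\tfrac{2}{\gamma_j(H_{\gamma_j})}H_{\gamma_j}$; alternatively one invokes Proposition 5.5, $\int_{S^{(j)}}\widetilde\omega=4\pi$, together with the known relation between the Kähler potential $\log k_c$ and $\widetilde\omega=\tfrac2p\omega$ on the sphere, to see the exponent on each factor must be $p$ once the global normalization $\widetilde q_0=\tfrac2p q_0$ is taken into account — indeed the factor of $p$ in the final formula is exactly the renormalization constant, since on a single Helgason sphere the unnormalized kernel contributes the first power.

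Finally I would assemble: $k_c(z,w)=\prod_{j=1}^r(1+z_j\overline w_j)^p$. A clean alternative route, avoiding explicit $2\times2$ matrices, is to quote the analogous non-compact formula from \cite{s, co2} — there $k(z,w)=\prod_j(1-z_j\overline w_j)^p$ on $\mathfrak a_+\times\mathfrak a_+$ — and then apply the identity $k_c(z,w)=k(z,-w)$ recorded just above the statement, which immediately turns $(1-z_j\overline w_j)$ into $(1+z_j\overline w_j)$; the exponent $p=(r-1)a+b+2$ is unchanged because it is a representation-theoretic datum (the sum of multiplicities controlling $\det\Ad_{\mathfrak p_+}$ of a one-parameter torus) insensitive to the sign flip.

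The main obstacle is the bookkeeping in the rank-one step: getting the exponent exactly right, i.e.\ verifying that $\det\Ad_{\mathfrak p_+^{(j)}}$ of the relevant torus element is the $p$-th and not merely the first power of $1+z_j\overline w_j$. This is where the normalization constant $2/p$ and the precise choice of root vectors $X_{\pm\gamma}$ genuinely enter, and it is cleanest to settle it by transporting the already-verified non-compact computation via $k_c(z,w)=k(z,-w)$ rather than redoing the determinant of $\Ad_{\mathfrak p_+}$ from scratch; modulo that, everything else is the multiplicativity over the polysphere factors, which is purely formal.
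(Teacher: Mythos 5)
Your recommended route --- quoting the non-compact formula $k(z,w)=\prod_j(1-z_j\overline w_j)^p$ on $\mathfrak a_+\times\mathfrak a_+$ and applying the identity $k_c(z,w)=k(z,-w)$ --- is precisely how the paper obtains this proposition (it is stated without proof, immediately after that identity is recorded, with the explicit remark that the results of the subsection are deduced from their non-compact analogues). So your conclusion and your preferred argument match the paper.

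One correction to your first, direct route, since you flagged the exponent as ``the main obstacle'': the difficulty is not a normalization of root vectors, and it cannot be resolved by invoking $\widetilde\omega$ or $\int_{S^{(j)}}\widetilde\omega=4\pi$ (the renormalization $2/p$ plays no role here; $k_c$ is defined before any renormalization). The flaw is that $\mathfrak p_+$ is \emph{not} equal to $\bigoplus_k\mathfrak p_+^{(k)}$ unless $M$ is itself a polysphere; that direct sum is only $\mathfrak p_+(\Gamma)=\mathfrak a_+$-adjacent part of $\mathfrak p_+$. Hence $\det\Ad_{\mathfrak p_+}K_c(z,w)$ does not factor as a product of rank-one determinants over the Helgason spheres --- that would give $\prod_j(1+z_j\overline w_j)^2$, since the genus of $\mathbb C\mathbb P_1$ is $2$. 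The correct direct computation takes the determinant over all of $\mathfrak p_+$, decomposed into restricted weight spaces for $\mathfrak a$: the weight $\gamma_j$ contributes $(1+z_j\overline w_j)^2$, each weight $\tfrac12(\gamma_j+\gamma_l)$, $l\neq j$, contributes a factor $(1+z_j\overline w_j)^{a}$ in the $z_j$-variable, and the weights $\tfrac12\gamma_j$ contribute $(1+z_j\overline w_j)^{b}$, giving total exponent $2+(r-1)a+b=p$. This is exactly why it is cleaner to do as you (and the paper) ultimately suggest and transport the already-established non-compact computation via $k_c(z,w)=k(z,-w)$.
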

The last proposition shows in particular that $k_c(z,z)>0$ for $z\in \mathfrak p_+$.
In turn, the kernel $k_c$ allows to describe a K\"ahler potential for $\omega$ on $\mathfrak p_+$.
\begin{proposition} For $z\in \mathfrak p_+$,
\begin{equation}\label{potential}
\omega_z = i\partial \overline{\partial} \log k_c(z,z)\ .
\end{equation}
\end{proposition}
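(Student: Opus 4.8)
The plan is to reduce the identity \eqref{potential} to the explicit computation on the flat slice $\mathfrak a_+$ provided by \eqref{ksuraplus}, using $\Ad K_0$-covariance to propagate it to all of $\mathfrak p_+$. First I would note that both sides of \eqref{potential} are smooth $(1,1)$-forms on the chart $(\Xi,\mathfrak p_+)$: the left-hand side because $\omega$ is the globally defined K\"ahler form pulled back by the holomorphic diffeomorphism $\Xi$, and the right-hand side because the preceding proposition shows $k_c(z,z)>0$ everywhere on $\mathfrak p_+$, so $\log k_c(z,z)$ is a genuine smooth function there. Thus it suffices to check the equality of the two forms pointwise.

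The key reduction is the covariance property \eqref{covkc} of $k_c$. Apply it with $g=\tau(g)=k\in K_0\subset U$: since $K_0$ fixes the origin and acts linearly on $\mathfrak p_+$ via $\Ad$, one has $k(z)=\Ad(k)z$ and $j(k,z)$ is the constant $j(k)=\det\Ad_{\mathfrak p_+}(k_0\text{-part})$, a unimodular scalar. Hence $\log k_c(\Ad(k)z,\Ad(k)z)=\log k_c(z,z)+\log|j(k)|^2=\log k_c(z,z)$, so the function $z\mapsto\log k_c(z,z)$ is $\Ad K_0$-invariant, and therefore so is the form $i\partial\bar\partial\log k_c(z,z)$. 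On the other side, $\omega$ is $U$-invariant, in particular $K_0$-invariant, and $\Xi$ intertwines the linear $\Ad K_0$-action on $\mathfrak p_+$ with the $K_0$-action on $M$ fixing $o$. So both forms in \eqref{potential} are $\Ad K_0$-invariant. By \eqref{aKp}, every point of $\mathfrak p_+$ is $\Ad K_0$-conjugate to a point of $\mathfrak a_+$, and every tangent vector at such a point can likewise be brought into a standard position; consequently it is enough to verify \eqref{potential} at points $z\in\mathfrak a_+$, and there only for the restriction to the tangent directions, which by a further use of covariance (or by the product structure of the polysphere) reduces to the rank-one factors.

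On $\mathfrak a_+\times\mathfrak a_+$ the formula \eqref{ksuraplus} gives $\log k_c(z,z)=p\sum_{j=1}^r\log(1+|z_j|^2)$, so $i\partial\bar\partial\log k_c(z,z)=p\sum_{j=1}^r i\partial\bar\partial\log(1+|z_j|^2)$, i.e. $p$ times the sum of the standard Fubini--Study forms on the factors $\mathbb C\subset S^{(j)}\simeq\mathbb C\mathbb P_1$. On the other hand, the chosen normalization of the metric (the factor $\tfrac2p$ built into $\widetilde q_0$, hence the normalization of $\omega$ here) was precisely arranged so that the restriction of $\omega$ to each Helgason sphere $S^{(j)}$ is $p$ times the standard area form on $\mathbb C\mathbb P_1$ in the affine coordinate $z_j$ — this is the content of the remark preceding Proposition \ref{intS}, and it pins down the constant. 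Comparing the two expressions on $\mathfrak a_+$ and using that $T(\Gamma)=\exp(i\mathfrak a_0)$ together with the complex structure identifies the relevant tangent directions, the two forms agree on $\mathfrak a_+$, hence everywhere.

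The main obstacle is the bookkeeping in the reduction step: one must be careful that $\Ad K_0$-conjugacy of \emph{points} alone does not immediately give equality of \emph{forms} at those points unless one also controls how tangent vectors are moved, and that the identification of $T_o M$, $\mathfrak p_0$ and $\mathfrak p_+$ (through $X\mapsto\tfrac12(X-iJX)$) is compatible with the complex structures and with the $\Ad K_0$-action. Equivalently — and this is the cleaner route — one avoids the pointwise argument entirely by observing that $\omega-i\partial\bar\partial\log k_c(z,z)$ is a $K_0$-invariant closed $(1,1)$-form on $\mathfrak p_+$ which, by the computation on $\mathfrak a_+$ and \eqref{aKp}, vanishes on a set meeting every $\Ad K_0$-orbit; invariance then forces it to vanish identically. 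Either way, once the normalization constant $p$ is matched on a single Helgason sphere, the rest is covariance, and the argument runs exactly parallel to the non-compact case treated in \cite{s, co2}, which is why the paper states it without a detailed proof.
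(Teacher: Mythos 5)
The paper itself gives no proof of this proposition: it is one of the statements declared to follow ``from the analogous result in the non-compact case'' via $K_c(z,w)=K(z,-w)$, the intended argument being the direct Satake-style differentiation of $\log\det\Ad_{\mathfrak p_+}K_c(z,z)$ against the expression of the metric and of $\Ad_{\mathfrak p_+}K_c$ in the two preceding propositions. Your strategy --- covariance, plus the explicit formula \eqref{ksuraplus} on $\mathfrak a_+$, plus a normalization check on a Helgason sphere --- is a legitimate alternative in outline, but as written it has a genuine gap at exactly the point you flag as ``bookkeeping'' and then do not resolve.

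The gap: $K_0$-invariance together with \eqref{aKp} reduces the claim to showing that the $(1,1)$-form $\alpha=\omega-i\partial\bar\partial\log k_c(z,z)$ vanishes \emph{as a $2$-covector} at every point $z\in\mathfrak a_+$, i.e.\ on all pairs of tangent vectors in $\mathfrak p_+$. But what the computation from \eqref{ksuraplus} yields is only the pullback of $\alpha$ to the submanifold $\mathfrak a_+$ (the components $\partial_{z_i}\partial_{\bar z_j}$ with both indices along $\mathfrak a_+$), since \eqref{ksuraplus} describes $k_c$ on $\mathfrak a_+\times\mathfrak a_+$ only. For $z\neq 0$ the stabilizer of $z$ in $K_0$ is essentially the centralizer of $\mathfrak a_0$, which is far too small to force a covector vanishing on the $\mathfrak a_+$-directions to vanish on the restricted-root directions transverse to $\mathfrak a_+$: already for $M=\mathbb C\mathbb P_n$ with $n\geq 2$, $\mathfrak a_+$ is a complex line in $\mathfrak p_+\simeq\mathbb C^n$ and there are many nonzero stabilizer-invariant $(1,1)$-covectors that kill that line. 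So ``vanishes on a set meeting every $\Ad K_0$-orbit'' does not imply vanishing, and your ``cleaner route'' has the same hole. The fix is to upgrade from $K_0$- to $U$-invariance: by \eqref{covkc} with $g\in U$ one has $\log k_c(g(z),g(z))-\log k_c(z,z)=\log|j(g,z)|^2$, which is pluriharmonic (locally the real part of a holomorphic logarithm of $j(g,\cdot)^2$), so $i\partial\bar\partial\log k_c(z,z)$ is invariant under the full partial action of $U$, not merely under $K_0$. Since $U$ is transitive on $M$ it then suffices to compare the two forms at $z=0$, where the isotropy representation of $K_0$ on $\mathfrak p_+$ is irreducible, Schur's lemma makes the space of invariant $(1,1)$-covectors one-dimensional, and a single evaluation along $X_{\gamma_1}$ using \eqref{ksuraplus} fixes the constant. (At that last step check the factor of $2$: the round area form of total mass $4\pi$ on $S^2$ is $2\,i\partial\bar\partial\log(1+|z|^2)$, so $\omega=\tfrac{p}{2}\widetilde\omega$ restricts to $i\partial\bar\partial\log(1+|z|^2)^p$, consistent with $k_c=h^p$.)
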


The ingredients for the proof of the analog of the Domic-Toledo formula are now available. But a new difficulty occurs, namely the formula ought to involve an argument of $k_c(z,w)$ for $z,w\in \mathfrak p_+$. But $k_c(z,w)$ may vanish (see \eqref{ksuraplus}). Let us consider first the situation on the simplest example, namely the case where $M=\mathbb C\mathbb P_1$.

\subsection{The case of the projective space}

Let $M=\mathbb C\mathbb P_1$ be the complex projective space, i.e. the space of complex lines in $\mathbb C^2$. As origin, choose the origin  the complex line generated by $(1,0)$. The map 
\[\mathbb C \ni z\quad \longmapsto \quad \text{the complex line generated by } (z,1)\in M
\]
gives a local chart on $M$ and actually coincides with $\Xi$. As usual, it is convenient to consider $M$ as $\mathbb C\cup \infty$ where $\infty$ is  the line  generated by $(0,1)$. 

The group  $\mathbf= SL(2,\mathbb C)$ acts naturally (projectively) on $M$, and the expression of this action in  the chart is given by
\[g=\begin{pmatrix} a&b\\c&d\end{pmatrix}\in SL(2,\mathbb C), \quad z\in \mathbb C, \qquad g(z) = \frac{az+b}{cz+d},
\]
extended to $\mathbb C\cup \infty$ by
by
\[g(\infty) = \frac{a}{c},\quad g\left(-\frac{d}{c}\right)=\infty\quad \text{if } c\neq 0, \quad g(\infty)=\infty \quad \text{if } c=0\ .
\]
Its standard maximal compact subgroup is
 \[U=SU(2)= \left\{ \begin{pmatrix} \alpha&\beta\\-\overline \beta&\overline \alpha\end{pmatrix}, \alpha,\beta\in \mathbb C, \vert \alpha\vert^2+\vert \beta\vert^2=1\right\}\ .\]
 The stabilizer of the origin in $U$ is the subgroup 
 \[K_0 = \left\{\begin{pmatrix} e^{i\frac {\theta}{2}}&0\\0&e^{-i\frac{\theta}{2}}\end{pmatrix}, \theta\in \mathbb R \right\}\ .
 \]
Let $\beta\in \mathbb C, \vert \beta\vert = 1$. Consider the one-parameter subgroup of $SU(2)$ given by
\[g_\beta(t) = \begin{pmatrix} \cos t&\beta \sin t\\ -\overline \beta \sin t&\cos t\end{pmatrix} = \exp t\begin{pmatrix}0&\beta\\-\overline \beta&0 \end{pmatrix}\ .
\]
Then $t\longmapsto g_\beta(t)0 = (\tan t) \beta$ is the expression in the chart of the geodesic curve through $0$ with tangent vector at $0$ equal to $\beta$. 

\begin{lemma} {\ }

$i)$ the antipodal point of $0$ is $\infty$

$ii)$ the antipodal point of $z\neq 0$ is equal to $\displaystyle -{\overline z}^{-1}$.
\end{lemma}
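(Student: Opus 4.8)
The statement is a concrete computation about antipodal points on $M = \mathbb{C}\mathbb{P}_1 = \mathbb{C}\cup\{\infty\}$, so the plan is to reduce everything to the one-parameter geodesic subgroups $g_\beta(t)$ already introduced, together with the action of $SU(2)$ by M\"obius transformations. Recall that in a Hermitian symmetric space of compact type (here a sphere of constant curvature $1$, after the normalization of Section~4.1) the cut locus of a point coincides with its first conjugate locus, and for the round $\mathbb{C}\mathbb{P}_1\simeq S^2$ this consists of a single point, the \emph{antipodal} point, reached by \emph{every} geodesic emanating from the base point at parameter value $\pi/2$ (since all geodesics are great circles of length $2\pi$).

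\textbf{Part $i)$.} For the origin $0$, I would simply evaluate the geodesic $t\mapsto g_\beta(t)\cdot 0 = (\tan t)\,\beta$ at $t = \pi/2$: as $t\to \pi/2^-$ one has $\tan t\to +\infty$, so $g_\beta(\pi/2)\cdot 0 = \infty$, independently of the unit vector $\beta$. This identifies $\infty$ as the common endpoint of all geodesics starting at $0$, hence as the antipodal (cut) point of $0$. Alternatively, and perhaps more cleanly, note that $K_0$ fixes $0$ and acts on $\mathbb{C}\cup\{\infty\}$ by $z\mapsto e^{i\theta} z$, so the only two points of $M$ fixed by $K_0$ are $0$ and $\infty$; since the antipodal point of $0$ must be $K_0$-invariant, it is $\infty$.

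\textbf{Part $ii)$.} For a general $z\neq 0$ I would use homogeneity: pick $g\in SU(2)$ with $g\cdot 0 = z$, and then the antipodal point of $z$ is $g\cdot\infty$, because $g$ is an isometry hence carries the antipodal point of $0$ to the antipodal point of $g\cdot 0 = z$. Concretely, choosing
\[
g = \frac{1}{\sqrt{1+|z|^2}}\begin{pmatrix} 1 & z \\ -\overline{z} & 1 \end{pmatrix}\in SU(2),
\]
one checks $g\cdot 0 = z$ and computes $g\cdot\infty = a/c = 1/(-\overline{z}) = -\,\overline{z}^{-1}$, which is the claimed formula. (One should also note this is consistent with $i)$ in the limit, and with the involutive nature of "antipodal": applying the formula twice returns $z$.)

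\textbf{Main obstacle.} There is no real obstacle here; the only point requiring a word of care is the justification that "the antipodal point" is well defined and is transported by isometries --- i.e. that for the normalized round sphere $\mathbb{C}\mathbb{P}_1$ the cut locus of a point is genuinely a single point. This follows from Proposition~\ref{Cut} together with Proposition~\ref{FCL} (with $r=1$), or simply from the classical description of $S^2$, and so the lemma is essentially an explicit unwinding of those facts in the chart $\Xi$.
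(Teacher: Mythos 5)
Your proof is correct and follows essentially the same route as the paper: part $i)$ by letting $\tan t$ blow up along the geodesics $t\mapsto (\tan t)\beta$ through the origin, and part $ii)$ by using an element of $SU(2)$ carrying $0$ to $z$ and transporting the antipodal point $\infty$ (the paper works with a generic $g=\begin{pmatrix}\alpha&\beta\\-\overline\beta&\overline\alpha\end{pmatrix}$ and reads off $g(0)=\beta/\overline\alpha$, $g(\infty)=-\alpha/\overline\beta$, but the computation is the same as with your explicit matrix). Incidentally, your parameter value $t=\pi/2$ is the correct one; the paper's proof writes $t=\pi$, which is a slip since $g_\beta(\pi)=-I$ fixes $0$.
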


\begin{proof}  Using the notation introduced above, for $t=\pi$ we get $g_\beta(\pi) = \infty$ for any $\beta$, so that $i)$ follows. Next let \[g= \begin{pmatrix} \alpha&\beta\\-\overline \beta&\overline \alpha \end{pmatrix}, \vert \alpha\vert^2 +\vert \beta\vert^2=1\]
be an element of $SU(2)$, so that
\[g(0) = \frac{\beta}{\overline \alpha},\quad g(\infty) = -\frac{\alpha}{\overline \beta}
\]

As $g$ acts by isometrically on  $M$, the antipodal point of $z=\frac{\beta}{\overline \alpha}$ is equal to $-\frac{\alpha}{\overline \beta}= -\overline z^{-1}$.
\end{proof}

The automorphy kernel is given by
\[K(z,w) = \begin{pmatrix}1-z\overline w&0\\ 0&(1-z\overline w)^{-1} \end{pmatrix}
\]
and the  canonical kernel $k(z,w)$ is then given by
\[k(z,w) = (1-z\overline w)^2\ .
\]

The compact canonical kernel is given by
\[k_c(z,w) = (1+z\overline w)^2\ .
\]
In order to be able to define a continuous argument for $k_c$ we introduce the set
\[\mathcal S = \{(z,w)\in \mathbb C\times \mathbb C, 1+z\overline w \notin (-\infty, 0]\}\ .
\]

The geometric significance of the space $\mathcal S$ is given by the following propositions.

\begin{proposition}The pair $(z,w)$ belongs to $\mathcal S$ if and only if there exists a unique minimizing geodesic segment contained in $\mathbb C$ joining $z$ and $w$.
\end{proposition}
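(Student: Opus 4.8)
The plan is to reduce the statement to an explicit description of the cut locus in $\mathbb{C}\mathbb{P}_1$, as worked out in Section 2, combined with the elementary geometry of $SU(2)$-rotations from the preceding lemma. First I would recall that by Proposition \ref{Cut} (applied to $M=\mathbb C\mathbb P_1$, which has rank $1$), the cut locus of any point $z$ equals its first conjugate locus, which by Lemma 6.1 (the description of antipodal points) consists of the single point $-\overline z^{-1}$ when $z\neq 0$, and $\infty$ when $z=0$. By Proposition \ref{UMG}, there fails to be a unique minimizing geodesic between $z$ and $w$ precisely when $w$ lies in the cut locus of $z$, i.e. precisely when $w=-\overline z^{-1}$ (equivalently $z\overline w=-1$, equivalently $1+z\overline w=0$), with the boundary case $z=0,\ w=\infty$ lying outside the chart $\mathbb C$ altogether. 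So the condition ``$1+z\overline w\neq 0$'' is exactly the condition that there be a \emph{unique} minimizing geodesic between $z$ and $w$ in $M$. This already handles the dichotomy between $1+z\overline w=0$ and $1+z\overline w\neq 0$; the remaining work is to distinguish the case $1+z\overline w\in(-\infty,0)$ from $1+z\overline w\notin(-\infty,0]$, i.e. to show that the unique minimizing geodesic segment stays inside the chart $\mathbb C$ exactly when $1+z\overline w$ avoids the whole negative real axis.

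For that second point I would use $SU(2)$-equivariance to normalize. Given $(z,w)$ with $z\overline w\neq -1$, pick $g\in SU(2)$ with $g(z)=0$; then $g(w)=g(w)$ is some point $w'\in\mathbb C\cup\{\infty\}$, and since $g$ is an isometry preserving the chart's complement $\{\infty\}$ only when... — more carefully, $g$ maps $\mathbb C\cup\{\infty\}$ to itself but need not fix $\infty$, so I instead track where the point $\infty$ goes. The unique minimizing geodesic from $0$ to $w'$ is the arc $t\mapsto (\tan t)\beta$, $\beta=w'/|w'|$, for $t\in[0,\arctan|w'|]$ (taking the principal value), and this arc avoids $\infty$ as long as $|w'|<\infty$, i.e. as long as $w'\neq\infty$, i.e. as long as $w$ is not the $g$-preimage of $\infty$. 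Thus the minimizing geodesic between $z$ and $w$ lies entirely in $\mathbb C$ iff the point $g^{-1}(\infty)$ — which is the antipode of $g^{-1}(0)=z$, hence equals $-\overline z^{-1}$ — does not lie on that geodesic arc. But $-\overline z^{-1}$ is the cut point of $z$, so it can lie on the (relatively open) minimizing segment only if $w$ itself is beyond the cut point, which is already excluded; the genuinely new phenomenon is that $\infty\in M\setminus\mathbb C$ may lie on the segment even when both endpoints are finite. Concretely, $\infty$ lies on the geodesic from $z$ to $w$ iff $z,\infty,w$ are ``collinear'' on a common great circle with $\infty$ between them, and I would translate this collinearity condition, via the formula $g(u)=(au+b)/(cu+d)$, into the algebraic statement that $w'=g(w)$ is a negative real multiple of $\beta=w'/|w'|$ — i.e. that the argument condition degenerates — and then unwind it back through the cocycle relation \eqref{covkc} for $k_c$ (with $k_c(z,w)=(1+z\overline w)^2$ and $k_c(0,w')=(1+0)^2\cdot$ nothing, rescaled by positive factors $j(g,z)\overline{j(\tau(g),w)}$ whose product is positive) to see that $1+z\overline w\in(-\infty,0)$ exactly in this degenerate case.

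The main obstacle, and the step I would spend the most care on, is this last bookkeeping: showing that the \emph{argument} of $1+z\overline w$ is $SU(2)$-invariant in the appropriate sense, so that the chart-independence of the geometric condition ``geodesic stays in $\mathbb C$'' matches the algebraic condition ``$1+z\overline w\notin(-\infty,0]$''. The clean way is to invoke Proposition \ref{covkc}: for $g\in SU(2)=U$ we have $\tau(g)=g$ (since $U=\mathbf G^\tau$) and $\overline{j(g,w)}=j(g,w)$ is, by Proposition 5.4 specialized to $SU(2)$, a positive quantity (indeed $|cz+d|^{-2}$-type factor to a positive power), so $k_c(g(z),g(w))$ and $k_c(z,w)$ differ by a \emph{positive} real factor, hence have the same argument. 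Therefore the condition $1+z\overline w\in(-\infty,0]$ is $U$-invariant, and it suffices to check the equivalence after moving $z$ to $0$, where it is immediate: with $z=0$ we have $k_c(0,w)=1>0$ always, the minimizing geodesic from $0$ to $w$ is $(\tan t)\beta$, $0\le t\le\arctan|w|<\pi/2$, which manifestly never hits $\infty$, consistent with $1+0\cdot\overline w=1\notin(-\infty,0]$. The finite-endpoint case where the segment exits the chart then corresponds, after applying a general $g\in SU(2)$, precisely to the pairs with $1+z\overline w$ strictly negative. I would close by remarking that the excluded locus $1+z\overline w=0$ is exactly the pairs $(z,-\overline z^{-1})$ for which, by Proposition \ref{UMG}(ii), there are infinitely many minimizing segments, so neither uniqueness nor containment in $\mathbb C$ can hold, completing the equivalence.
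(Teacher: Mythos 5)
Your first step is fine and consistent with the paper: non-uniqueness of the minimizing geodesic occurs exactly when $w$ is the antipode $-\overline z^{-1}$ of $z$, i.e.\ when $1+z\overline w=0$. The gap is in the second step. Your reduction to $z=0$ rests on the claim that for $g\in SU(2)$ the automorphy factors $j(g,z)$, $\overline{j(g,w)}$ appearing in \eqref{covkc} are positive, so that $k_c(g(z),g(w))$ and $k_c(z,w)$ have the same argument and the condition $1+z\overline w\in(-\infty,0]$ is $U$-invariant. This is false: $j(g,z)$ is a power of $(-\overline\beta z+\overline\alpha)^{-1}$, a genuinely complex number, and the product $j(g,z)\overline{j(g,w)}$ is positive only when $z=w$. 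A direct computation gives
\[
1+g(z)\overline{g(w)}=\frac{1+z\overline w}{(-\overline\beta z+\overline\alpha)(\alpha-\beta\overline w)}\ ,
\]
and the denominator can have any argument. Concretely, take $\alpha=\beta=1/\sqrt2$, so $g(z)=\frac{z+1}{1-z}$, and $(z,w)=(0,1+t)$ with $t>0$: then $1+z\overline w=1>0$ while $1+g(z)\overline{g(w)}=-2/t<0$. Geometrically this must happen: $g^{-1}(\infty)=1$ lies strictly between $0$ and $1+t$, so the image under $g$ of the straight minimizing segment $[0,1+t]$ passes through $\infty$. Hence neither $\mathcal S$ nor the condition $1+z\overline w\notin(-\infty,0]$ is $U$-invariant — it cannot be, since condition $(ii)$ refers to the chart $\mathbb C$, which $U$ does not preserve — and moving $z$ to $0$ destroys precisely the information you are trying to capture. (The same confusion surfaces earlier, where $g(\infty)$ and $g^{-1}(\infty)$ get interchanged and where ``$w'$ is a negative real multiple of $w'/|w'|$'' is vacuous.)

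The invariance that does hold, and that the paper exploits, is under the smaller group $K_0$ of rotations $z\mapsto e^{i\theta}z$: these fix both $0$ and $\infty$, preserve the chart, and leave $1+z\overline w$ unchanged. The paper first treats $1+z\overline w\notin\mathbb R$: then $z,w$ are linearly dependent over $\mathbb R$ only if $z\overline w\in\mathbb R$, so they cannot both lie on a great circle through $0$ and $\infty$ (the closures of real lines through the origin), hence the unique minimizing segment cannot contain $\infty$ and lies in $\mathbb C$. When $1+z\overline w\in\mathbb R$, a rotation in $K_0$ puts $z=x>0$ and $w=y\in\mathbb R$ on the real great circle, where one checks by hand that the minimizing segment avoids $\infty$ iff $y>-1/x$, i.e.\ iff $1+xy>0$. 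To repair your argument you would need to restrict your normalizing element to $K_0$, or else explicitly track the position of $g^{-1}(\infty)$ relative to the segment; as written, the key invariance claim is wrong and the proof does not go through.
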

\begin{proof}
First observe that $1+z\overline w\in \mathbb R$ is equivalent to $(z,w)$ linearly independent over $\mathbb R$ and hence connected by a line through the origin. Assume now that $1+z\overline w\notin \mathbb R$. Then the antipodal point of $z$ is not equal to $w$, so that there exists a unique minimizing geodesic segment on $M$ from $\Xi(z)$ to $\Xi(w)$. If the segment is not contained in $\mathbb C$, then $\infty$ belongs to this segment. But any geodesic line which passes through $\infty$ passes through its antipodal point $0$, the geodesic would contain $0,z,w$ and so $z,w$ would be linearly dependent over $\mathbb R$.  Hence if $1+z\overline w\notin \mathbb R$, there exists a unique minimizing geodesic segment contained in $\mathbb C$ joining $z$ and $w$. 

Now assume that $1+z\overline w\in \mathbb R$. It follows that $z,w\neq 0$ and using the action of 
of $K_0$, we may even assume that $z>0$ and $w\in \mathbb R\smallsetminus\{0\}$. For convenience set $z=x$ and $w=y$. The antipodal point of $z$ is equal to $-\frac{1}{x}$. Now if $y\in ]-\frac{1}{x},\infty)$, the geodesic segment from $x$ to $y$ does not contain the antipodal point of $z$ and hence is the minimizing geodesic segment form $z$ to $w$. On the opposite, if $y<-\frac{1}{x}$, the geodesic segment from $z$ to $w$ contains the antipodal point of $z$ and hence is not minimizing.This finishes the proof of the Proposition.
\end{proof}
\begin{proposition} There exists a global smooth determination of $\arg k_c(z,w)$ on the set $\mathcal S$. It can be chosen as $2\Arg\big(k_c(z,w)\big)$, where $\Arg$ is the principal determination of the argument on $\mathbb C\smallsetminus (-\infty, 0]$.
\end{proposition}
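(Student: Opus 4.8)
The plan is to write down the determination explicitly — namely $\psi(z,w)=2\Arg(1+z\overline w)$, which is the content of the formula in the statement once one recalls that $k_c(z,w)=(1+z\overline w)^2$ — and then to verify directly that it is smooth on $\mathcal S$ and satisfies $e^{i\psi}=k_c/\vert k_c\vert$. The only point requiring a small argument is that the determination so obtained is genuinely \emph{global}, for which I would observe that $\mathcal S$ is connected; the computational part is routine, the real reason for the choice of $\mathcal S$ being that $k_c$ is a \emph{perfect square}.

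First I would note that $k_c$ does not vanish on $\mathcal S$: indeed $k_c(z,w)=0$ forces $z\overline w=-1\in(-\infty,0]$, which is excluded on $\mathcal S$, so $\arg k_c$ is pointwise meaningful there. By the very definition of $\mathcal S$, the map $(z,w)\mapsto 1+z\overline w$ carries $\mathcal S$ into the slit plane $\mathbb C\smallsetminus(-\infty,0]$; it is smooth (polynomial in the real and imaginary parts of $z,w$), and the principal argument $\Arg$ is real-analytic on the slit plane, so $\psi(z,w)=2\Arg(1+z\overline w)$ is a smooth (indeed real-analytic) real-valued function on $\mathcal S$. Writing $1+z\overline w=\rho e^{i\theta}$ with $\rho=\vert 1+z\overline w\vert>0$ and $\theta=\Arg(1+z\overline w)\in(-\pi,\pi)$, one gets $k_c(z,w)=\rho^2 e^{2i\theta}$, hence $\vert k_c(z,w)\vert=\rho^2$ and $k_c(z,w)/\vert k_c(z,w)\vert=e^{2i\theta}=e^{i\psi(z,w)}$. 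This exhibits $\psi$ as a smooth determination of $\arg k_c$ on all of $\mathcal S$, in the stated explicit form.

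Finally, I would record that $\mathcal S$ is connected, so that $\psi$ is the unique such determination up to an additive constant in $2\pi\mathbb Z$ (it may be normalised by $\psi(0,0)=0$). To see the connectedness, consider the continuous, open, surjective map $F(z,w)=z\overline w$ from $\mathbb C\times\mathbb C$ onto $\mathbb C$: all its fibres are connected (the fibre over $c\neq 0$ is $\{(z,\overline c/\overline z):z\in\mathbb C\smallsetminus\{0\}\}$ and the fibre over $0$ is the union of the two coordinate planes), one has $\mathcal S=F^{-1}\big(\mathbb C\smallsetminus(-\infty,-1]\big)$, and the base $\mathbb C\smallsetminus(-\infty,-1]$ is connected; any clopen partition of $\mathcal S$ would, using connectedness of the fibres and openness of $F$, descend to a clopen partition of the base, which is impossible. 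No serious obstacle arises in the proof; the one thing to watch is precisely this global step, together with the observation that it is the slit $(-\infty,0]$ for $1+z\overline w$, rather than for $k_c$ itself, that must be removed.
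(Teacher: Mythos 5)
Your proof is correct and follows essentially the same route as the paper: the explicit formula $2\Arg(1+z\overline w)$ (the statement's ``$2\Arg\big(k_c(z,w)\big)$'' is evidently a typo for this, as you rightly note, since $k_c(z,w)=(1+z\overline w)^2$ can land on the negative real axis even when $1+z\overline w$ does not) does all the work, and the verification that it is smooth and determines $\arg k_c$ on $\mathcal S$ is exactly the intended argument. The only place you work harder than necessary is the connectedness of $\mathcal S$: the paper simply observes that $\mathcal S=\{(z,w):\, z\overline w\notin(-\infty,-1]\}$ is star-shaped with respect to $(0,0)$ (for $t\in[0,1]$, $t^2z\overline w$ never enters $(-\infty,-1]$), which yields connectedness --- indeed simple connectedness --- without your fibration argument via $F(z,w)=z\overline w$.
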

Notice that the space $\mathcal S$, seen as $\{(z,w), z\overline w\notin (-\infty,-1]\}$ is star-shaped with respect to $(0,0)$ in $\mathbb C^2$, so that $\mathcal S$ simply connected.
\subsection{Geodesics and triangles in $\mathfrak p_+$}

Inspired by the previous case, we now define a certain subset $\mathcal S$ of $\mathfrak p_+\times \mathfrak p_+$.
\smallskip

\noindent
{\bf Definition.\ } {\sl Let $\mathcal S$ be the set of all $(z,w)\in \mathfrak p_+\times \mathfrak p_+$ for which 
\smallskip

$i)$ there exists a unique minimizing geodesic segment from $\Xi(z)$ to $\Xi(w)$
\smallskip

$ii)$ the minimizing segment is contained in $\im(\Xi)$.} 
\smallskip

In the sequel for $(z,w)\in \mathcal S$, denote by $\beta_{z,w} : [0,1]\longrightarrow  \mathfrak p_+$ the unique curve such that $\Xi\circ \beta_{z,w}$ is the unique minimizing geodesic segment joining $\Xi(z)$ and $\Xi(w)$.

\begin{lemma}\label{Spolysphere}
 Let $(z,w)\in \mathcal S$.   
 Then there exists $u\in U$ such that $u$ is defined along  the curve $\beta_{z,w}$  and $u(z)=0$.
\end{lemma}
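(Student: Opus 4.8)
The plan is to reduce the statement to the two constructions of geodesics through the origin that were set up in Section 2. First I would use Corollary \ref{geodesicpolysphere}: since $\Xi\circ\beta_{z,w}$ is a geodesic segment and $\Xi(z)$ is one of its endpoints, there exists $u_1\in U$ with $u_1(\Xi(z))=o$ and with $u_1\circ\Xi\circ\beta_{z,w}$ lying inside the polysphere $\mathbf S(\Gamma)$, in particular inside $\im(\Xi)$. The only thing that prevents us from concluding immediately is that we need $u_1$ to be \emph{defined} (as a partial transformation of $\mathfrak p_+$ in the chart) along the \emph{whole} curve $\beta_{z,w}$, i.e. the curve must avoid the locus where $u_1$ is undefined, which by Proposition \ref{bigcell} is $u_1^{-1}(\mathbf C_0^{(1)})$. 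Since $u_1$ carries the geodesic into $\mathbf S(\Gamma)\cap\im(\Xi)$, and $\im(\Xi)=M\smallsetminus\mathbf C_0^{(1)}$, the image curve already avoids $\mathbf C_0^{(1)}$; translating back, $\Xi\circ\beta_{z,w}$ avoids $u_1^{-1}(\mathbf C_0^{(1)})$ \emph{except possibly} at points where the original curve meets $\im(\Xi)$ but its $u_1$-image leaves it — but that cannot happen because the image lies in $\im(\Xi)$ by construction. So $u_1$ is defined along all of $\Xi\circ\beta_{z,w}$, hence along $\beta_{z,w}$ as a curve in $\mathfrak p_+$, and $u_1(z)=0$.

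More carefully, the subtlety is the interplay between ``$u$ maps the geodesic into $\mathbf S(\Gamma)$'' and ``$u$ is defined in the $\Xi$-chart along the geodesic''. Being defined at a point $\zeta\in\mathfrak p_+$ means $u\exp(\zeta)\in\mathbf P_+\mathbf K\mathbf P_-$, equivalently $u(\Xi(\zeta))\in\im(\Xi)=M\smallsetminus\mathbf C_0^{(1)}$ by Proposition \ref{bigcell}. Now condition $ii)$ in the definition of $\mathcal S$ says the minimizing geodesic $c=\Xi\circ\beta_{z,w}$ lies entirely in $\im(\Xi)$, so each $\Xi(\beta_{z,w}(t))$ is a genuine point of $M$ not on $\mathbf C_0^{(1)}$. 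Apply Corollary \ref{geodesicpolysphere} to the geodesic $c$ with $c(0)=\Xi(z)$: we obtain $u_1\in U$ with $u_1(\Xi(z))=o=\Xi(0)$ and $u_1\circ c\subset\mathbf S(\Gamma)$. Since $\mathbf S(\Gamma)$ is a product of Helgason spheres and $\mathbf C_0^{(1)}\cap\mathbf S(\Gamma)$ consists (by the proof of Proposition \ref{bigcell}) of the tuples with at least one coordinate antipodal, a geodesic segment of $\mathbf S(\Gamma)$ through $o$ meets this set only possibly at its far endpoint; but here $u_1\circ c$ has initial point $o$, and if it hit $\mathbf C_0^{(1)}$ somewhere, the preimage point $c(t)$ would be a point where $u_1$ is undefined — yet $c(t)\in\im(\Xi)$, and we must check this is compatible.

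The resolution, and the one genuine point requiring care, is this: ``$u_1$ defined at $\zeta$'' is equivalent to ``$u_1(\Xi(\zeta))\in\im(\Xi)$'', and $u_1(\Xi(\beta_{z,w}(t)))=(u_1\circ c)(t)$, which lies in $\mathbf S(\Gamma)$. So $u_1$ is defined at $\beta_{z,w}(t)$ iff $(u_1\circ c)(t)\notin\mathbf C_0^{(1)}$. Thus I would argue that $u_1\circ c$, being a minimizing geodesic segment of $M$ contained in $\mathbf S(\Gamma)$ with one endpoint at $o$, cannot pass through a point antipodal-in-some-coordinate except possibly at its terminal endpoint $(u_1\circ c)(1)=u_1(\Xi(w))$; and even there it need not, but if it did we would simply compose $u_1$ with an additional small rotation in $K_0$ moving the endpoint off the antipode-locus while keeping $o$ fixed and the curve in $\mathbf S(\Gamma)$ — this is possible precisely because a geodesic through $o$ in a sphere $S^{(k)}\simeq\mathbb{CP}^1$ is determined by its direction, and the antipode-locus in each factor is a single point, so a generic $K_0$-rotation avoids it while fixing $o$. (Alternatively, and more cleanly: $c\subset\im(\Xi)$ already guarantees $c(t)\notin\mathbf C_0^{(1)}$ for all $t$ including $t=1$, but we need the same for $u_1\circ c$; since $u_1$ maps geodesics to geodesics and $o\notin\mathbf C_0^{(1)}$, and the conjugate locus of $o$ consists of points at maximal distance in each polysphere factor, one concludes $u_1$ is defined all along.) Setting $u=u_1$ then gives $u(z)=0$ and $u$ defined along $\beta_{z,w}$, completing the proof. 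The main obstacle, as indicated, is making rigorous the equivalence between ``$u$ carries the geodesic into a good region'' and ``$u$ is everywhere defined in the chart along the geodesic'', which hinges on Proposition \ref{bigcell} identifying $\im(\Xi)$ with $M\smallsetminus\mathbf C_0^{(1)}$ together with the product structure of $\mathbf S(\Gamma)$.
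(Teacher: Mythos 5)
Your first steps are the same as the paper's: apply Corollary \ref{geodesicpolysphere} to get $u_1\in U$ with $u_1(\Xi(z))=o$ and $u_1\circ\Xi\circ\beta_{z,w}\subset \mathbf S(\Gamma)$, and observe that ``$u_1$ is defined at $\zeta$'' means exactly $u_1(\Xi(\zeta))\in\im(\Xi)=M\smallsetminus\mathbf C_0^{(1)}$ (Proposition \ref{bigcell}). Your treatment of the interior points is also fine: an interior point of a minimizing segment starting at $o$ cannot lie on the cut locus of $o$, which here equals $\mathbf C_0^{(1)}$.

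The gap is at the terminal endpoint, and your proposed repair there does not work. If $u_1(\Xi(w))$ did lie on $\mathbf C_0^{(1)}$, composing $u_1$ with a rotation $k\in K_0$ could not move it off that set: by Proposition \ref{FTCL} the first conjugate locus of $o$ is $K_0$-invariant (it is $\cup_{k\in K_0}\Ad k\,(C_o^{(1)}\cap i\mathfrak a_0)$ exponentiated), so $k\big(u_1(\Xi(w))\big)\in\mathbf C_0^{(1)}$ whenever $u_1(\Xi(w))\in\mathbf C_0^{(1)}$. Your parenthetical ``alternative'' is likewise not a proof: the hypothesis $c(1)\in\im(\Xi)$ says $\Xi(w)\notin\mathbf C_0^{(1)}$, which is a statement about the position of $\Xi(w)$ relative to $o$, not about the position of $u_1(\Xi(w))$ relative to $o$, and $u_1$ does not fix $o$. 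What actually closes the argument — and what the paper does — is to invoke condition $i)$ in the definition of $\mathcal S$, which you never use at this point: since there is a \emph{unique} minimizing geodesic from $\Xi(z)$ to $\Xi(w)$ and $u_1$ is an isometry, $u_1(\Xi(w))$ is the endpoint of a unique minimizing geodesic issued from $o$; by Proposition \ref{UMG} (with Proposition \ref{Cut}) it therefore does not belong to the cut locus $=\mathbf C_0^{(1)}$ of $o$, hence lies in $\im(\Xi)$ by Proposition \ref{bigcell}. This uniqueness argument applies verbatim to every $\beta_{z,w}(t)$, $0\le t\le 1$, and handles endpoint and interior points uniformly.
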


\begin{proof} By definition, there exists $\beta : [0,1]\longrightarrow \mathfrak p_+$ such that $z=\beta(0), w=\beta(1)$ and $\Xi\circ \beta$ is a unique minimizing geodesic segment joining $\Xi(z)$ and $\Xi(w)$. By Corollary \ref{geodesicpolysphere}, there exists $u\in U$ such that $u$ maps the geodesic segment into $S(\Gamma)$ and $u(z)=o$. The point $u\big(\Xi(w)\big)$ is the endpoint of a unique minimizing geodesic segment starting at $o$, hence, by Proposition \ref{UMG} does not belong to the cut locus of $o$ and by Proposition \ref{bigcell} does belong to $\im(\Xi)$. In other words, $u$ is defined at $w$. But the same argument applies for $z_t=\beta(t)$ for any $t\in [0,1]$, so that $u$ is defined at any point of the minimizing segment.
\end{proof}

\begin{proposition} Let $(z,w)\in \mathcal S$. Then $ k_c(z,w)\neq 0$.

\end{proposition}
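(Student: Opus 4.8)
\emph{Plan of proof.} The idea is to use the covariance of $k_c$ to move the pair $(z,w)$ to a pair of the form $(0,w')$, for which $k_c$ can be computed by hand, exactly the normalization already exploited in the projective line model where $k_c(0,w)=1$.

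First I would invoke Lemma \ref{Spolysphere}: since $(z,w)\in\mathcal S$, there is an element $u\in U$ which is defined all along the curve $\beta_{z,w}$ — in particular at $z=\beta_{z,w}(0)$ and at $w=\beta_{z,w}(1)$ — and which satisfies $u(z)=0$. Because $U=\mathbf G^\tau$ we have $\tau(u)=u$, so the covariance identity \eqref{covkc} applied with $g=u$ reads
\[
k_c\big(0,u(w)\big)=k_c\big(u(z),\tau(u)(w)\big)=j(u,z)\,k_c(z,w)\,\overline{j(u,w)}\ .
\]
The two automorphy factors $j(u,z)$ and $j(u,w)$ are nonzero: $u$ being defined at $z$ (resp.\ at $w$) means precisely that $J(u,z)=(u\exp z)_0$ (resp.\ $J(u,w)$) is a well-defined element of $\mathbf K$, whence $\Ad_{\mathfrak p_+}J(u,z)$ is invertible and $j(u,z)=\det\Ad_{\mathfrak p_+}J(u,z)\neq0$. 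Consequently $k_c(z,w)\neq0$ if and only if $k_c\big(0,u(w)\big)\neq0$.

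It then remains to check that $k_c(0,w')\neq0$ for every $w'\in\mathfrak p_+$, and in fact $k_c(0,w')=1$. Since $\tau$ interchanges $\mathfrak p_+$ and $\mathfrak p_-$, the element $\exp(-\tau w')$ lies in $\mathbf P_-$, so $\exp(-\tau w')\exp(0)=\exp(-\tau w')\in\mathbf P_+\mathbf K\mathbf P_-$ has trivial $\mathbf K$-component; hence $K_c(0,w')=e$ and $k_c(0,w')=\det\Ad_{\mathfrak p_+}(e)=1$. (Alternatively, set $z=0$ in the explicit formula $\Ad_{\mathfrak p^+}K_c(z,w)=\id-\ad[z,\tau w]+\tfrac14(\ad z)^2(\ad\tau w)^2$, which collapses to $\id$.) Taking $w'=u(w)$ gives $k_c\big(0,u(w)\big)=1\neq0$, and the proposition follows.

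I do not expect a genuine obstacle: all the substance has been absorbed into Lemma \ref{Spolysphere}, which guarantees an element of $U$ simultaneously defined at $z$ and $w$ carrying $z$ to the origin. The only points needing (minor) care are the non-vanishing of the scalar cocycle $j(u,\cdot)$ — immediate from its being the $\Ad_{\mathfrak p_+}$-determinant of a $\mathbf K$-valued quantity — and the remark $\tau(\mathfrak p_+)=\mathfrak p_-$ used to identify $K_c(0,w')$ with the identity.
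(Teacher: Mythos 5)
Your argument is correct and is essentially the paper's own proof: both invoke Lemma \ref{Spolysphere} to produce $u\in U$ with $u(z)=0$ and $u$ defined at $w$, note that $k_c\bigl(0,u(w)\bigr)=1$, and conclude via the covariance identity \eqref{covkc}. You merely make explicit the minor points the paper leaves tacit ($\tau(u)=u$, the non-vanishing of the automorphy factors, and the computation $K_c(0,w')=e$), all of which check out.
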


\begin{proof} Let $(z,w)\in \mathcal S$.  By Lemma \ref{Spolysphere} there exists $u\in U$ such that $u(z) =0$ and $u$ is defined at $w$. Now $k_c\big(u(z), u(w)\big) = k_c\big(0,u(w)\big) =1$. Hence by \eqref{covkc} $ k_c\big(z,w)\neq 0$.
\end{proof}

\begin{proposition} The subset $\mathcal S$ is simply connected.
\end{proposition}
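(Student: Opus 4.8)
The plan is to exhibit an explicit deformation retraction of $\mathcal S$ onto the point $(0,0)$, mimicking the star-shapedness argument that worked for $M=\mathbb C\mathbb P_1$. The candidate homotopy is the rescaling
\[
H_t(z,w) = (tz, tw),\qquad t\in[0,1].
\]
Clearly $H_1 = \id_{\mathcal S}$ and $H_0$ is the constant map to $(0,0)$, and $H_0(\mathcal S) = \{(0,0)\}\subset\mathcal S$ since $0$ is joined to $0$ by the trivial (constant) geodesic, which is vacuously unique and contained in $\im(\Xi)$. So everything reduces to one claim: \emph{if $(z,w)\in\mathcal S$ then $(tz,tw)\in\mathcal S$ for all $t\in[0,1]$}, i.e. $\mathcal S$ is star-shaped with respect to $(0,0)$ in $\mathfrak p_+\times\mathfrak p_+$. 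Granting this, continuity of $H$ is automatic and $\mathcal S$ is contractible, hence simply connected.

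To prove the star-shapedness I would first normalize using Lemma \ref{Spolysphere}: pick $u\in U$ defined along $\beta_{z,w}$ with $u(z)=0$. The dilation $z\mapsto tz$ on $\mathfrak p_+$ is realized by a one-parameter subgroup $\delta_t = \exp\big((\log t)\,2iH_0/i\big)$ of (the center of) $\mathbf K$ — more precisely the element of $\mathbf K$ acting on $\mathfrak p_+$ as multiplication by $t$ — so it commutes suitably with the $U$-action only after care; the cleaner route is to argue directly in the polysphere. By Corollary \ref{geodesicpolysphere} applied via $u$, we may assume $z=0$, $w\in\mathfrak a_+$ (using the $K_0$-action and \eqref{aKp}), and the geodesic $\beta_{0,w}$ lies in $T(\Gamma)\subset\mathbf S(\Gamma)$. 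Writing $w=\sum_j w_j X_{\gamma_j}$, condition $(i)$+$(ii)$ for $(0,w)$ translates, via the product structure $\mathbf S(\Gamma)\simeq S^{(1)}\times\cdots\times S^{(r)}$ and the $\mathbb C\mathbb P_1$ computation of subsection 5.2, into: for each $j$, the point $w_j\in\mathbb C\subset S^{(j)}$ is joined to $o_j$ by a unique minimizing geodesic contained in the affine chart, which by Proposition 5.6 (the $\mathbb C\mathbb P_1$ case, with $z=0$) holds for \emph{every} $w_j\in\mathbb C$ — indeed $(0,w_j)\in\mathcal S_{\mathbb C\mathbb P_1}$ always since $1+0\cdot\overline{w_j}=1\notin(-\infty,0]$. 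Replacing $w$ by $tw$ does not leave the polysphere and only rescales each $w_j$, so $(0,tw)\in\mathcal S$ as well. Pulling back by $u^{-1}$ and unwinding the normalization gives $(tz,tw)\in\mathcal S$.

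The main obstacle is the bookkeeping in the normalization step: the element $u$ depends on $(z,w)$ and need not be defined along the rescaled curve $\beta_{tz,tw}$, so one must be careful that "$(0,tw)\in\mathcal S$ for the normalized pair" genuinely pulls back to "$(tz,tw)\in\mathcal S$". The safe way is to observe that $\mathcal S$ is stable under the partial $U$-action wherever that action is defined along the relevant geodesics (which follows from the covariance of $\Xi$ and the fact that $U$ acts by isometries preserving $\im(\Xi)=M\smallsetminus\mathbf C_o^{(1)}$ only up to the chart issue — here one uses Proposition \ref{bigcell} together with Lemma \ref{Spolysphere} as in the proof of the previous proposition), and that the dilations $\delta_t\in\mathbf K$ are globally defined on $\mathfrak p_+$ and send minimizing geodesics from $0$ to minimizing geodesics from $0$ inside $\im(\Xi)$ — the latter again reduced to the $r$ independent $\mathbb C\mathbb P_1$ factors, where it is the elementary statement that on $\mathbb C\subset\mathbb C\mathbb P_1$ the segment $[0,tw_j]$ is minimizing whenever $[0,w_j]$ is (true for all $t\in[0,1]$ and all $w_j$). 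Once this is phrased correctly, the retraction $H_t$ is continuous and the proposition follows.
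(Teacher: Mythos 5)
There is a genuine gap. Your whole argument rests on the claim that $\mathcal S$ is star-shaped with respect to $(0,0)$ under the scaling $(z,w)\mapsto(tz,tw)$, and neither of the two justifications you offer for this actually works. First, the normalization via Lemma \ref{Spolysphere} is not compatible with the scaling: if $u\in U$ sends $z$ to $0$ and $w$ to $w'=u(w)$, then the scaled normalized pair $(0,tw')$ pulls back under $u^{-1}$ to $\big(z,u^{-1}(tw')\big)$, which is not $(tz,tw)$ — the chart dilation does not commute with the action of $u$. Worse, the normalized statement is vacuous: $(0,w')$ lies in $\mathcal S$ for \emph{every} $w'\in\mathfrak p_+$ (by Proposition \ref{UMG} and Proposition \ref{bigcell}, since $\im(\Xi)$ is exactly the complement of the cut locus of $o$), so knowing $(0,tw')\in\mathcal S$ carries no information whatsoever about the pair $(tz,tw)$. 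Second, your ``safe way'' invokes the dilations $\delta_t\in\mathbf K$, but for $t<1$ these are elements of the complex group $\mathbf G$, not of $U$: they act holomorphically but \emph{not isometrically} on $M$, hence do not send geodesics to geodesics, and there is no reason they should preserve the uniqueness and minimality conditions defining $\mathcal S$ for a pair of points neither of which is the origin. The rank-one computation ($1+t^2z\overline w\notin(-\infty,0]$ whenever $1+z\overline w\notin(-\infty,0]$) does make the star-shapedness plausible, and the paper itself records it for $\mathbb C\mathbb P_1$, but in higher rank the cut-locus condition does not reduce to a single scalar inequality and you have not proved it.

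The paper avoids this entirely by retracting $\mathcal S$ onto the diagonal $\diag(\mathfrak p_+)$ rather than onto a point: the homotopy is $(z,w)\longmapsto\big(\beta_{z,w}(s),w\big)$, which manifestly stays in $\mathcal S$ because a subsegment of a unique minimizing geodesic contained in $\im(\Xi)$ is again a unique minimizing geodesic contained in $\im(\Xi)$; continuity follows from the continuous dependence of the (unique) minimizing geodesic on its endpoints, and the diagonal is homeomorphic to the vector space $\mathfrak p_+$, hence simply connected. If you want to salvage your approach you would need an independent proof that $(tz,tw)\in\mathcal S$ for all $t\in[0,1]$; otherwise the geodesic retraction is the correct route.
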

\begin{proof} The map $(z,w)\longmapsto (w,w)$ is a deformation retract from $\mathcal S$ onto the diagonal $\diag(\mathfrak p_+)$ in $\mathfrak p_+\times \mathfrak p_+$. In fact  $\beta_{z,w}(t)$ is a continuous family of continuous maps from $\mathcal S$ into $\mathcal S$ which satisfies $\beta_{z,w}(1) =  (w,w)$. As $\diag(\mathfrak p_+)\simeq \mathfrak p_+$ is simply connected, $\mathcal S$ is also simply connected.
\end{proof}
For $(z,w)$ in $\mathcal S$ define $\arg  k(z,w)$ as the unique continuous determination of the argument of $ k(z,w)$ which satisfies
$\arg( k(z,z)=0$. The existence of such an argument is guaranteed by the two last propositions. 
\section{The formula for the symplectic area of 
geodesic triangles}
We are now ready for the proof of the main formula. Recall the formula \eqref{potential} for expressing the K\"ahler form as 
\[ \omega_z = i\partial \overline{\partial} \log k(z,z)\ .
\]
Let $d_\mathbb C = -i(\partial -\overline{\partial})$, and let $\rho$ be the 1-differential form defined by
\[\rho_z = d_\mathbb C \log k(z,z)\ .\]
Then \eqref{potential} is equivalent to
\[ \omega = \frac{1}{2} d\rho
\] 

The next step is to compute $\int_\gamma \rho$ where $\gamma$ is a minizing geodesic segment between two points $z,w$ such that  $(z,w) \in\mathcal S$. We follow an argument due to A. Wienhard (\cite{w}).

\begin{lemma}\label{lem_invariant_cocycle_compact}
  Let $\gamma\colon [a,b]\to\mathfrak p_+$ be a smooth curve segment, and
  suppose that $k_c(\gamma(a),\gamma(b))$ is defined. Assume that
  $g\in U$ is an element such that the action of $g$ is defined on all
  points of $\gamma$, that is, $g\gamma$ is another smooth curve
  segment in $\mathfrak p_+$. Then we have
\begin{equation}
  \label{eq:invariant_cocycle_cpt}
\frac{k_c(g\gamma(a),g\gamma(b))}{k_c(g\gamma(b),g\gamma(a))}\,  \exp i\int\limits_{g\gamma}\rho 
  =\frac{k_c(\gamma(a),\gamma(b))}{k_c(\gamma(b),\gamma(a))}\,\exp  i\int\limits_{\gamma}\rho
\end{equation}
\end{lemma}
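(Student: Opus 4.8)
The plan is to show that the left-hand side of \eqref{eq:invariant_cocycle_cpt} equals the right-hand side by establishing that the quantity
\[
\Phi(\gamma) \;=\; \frac{k_c(\gamma(a),\gamma(b))}{k_c(\gamma(b),\gamma(a))}\,\exp i\!\int_{\gamma}\rho
\]
is unchanged when $\gamma$ is replaced by $g\gamma$ with $g\in U$. The key is the covariance relation \eqref{covkc}: for $g\in U$ we have $\tau(g)=g$, so \eqref{covkc} reads $k_c(g(z),g(w)) = j(g,z)\,k_c(z,w)\,\overline{j(g,w)}$ whenever $g$ is defined at $z$ and $w$. Applying this at the two endpoints $z=\gamma(a)$, $w=\gamma(b)$ gives
\[
\frac{k_c(g\gamma(a),g\gamma(b))}{k_c(g\gamma(b),g\gamma(a))}
= \frac{j(g,\gamma(a))\,\overline{j(g,\gamma(b))}}{j(g,\gamma(b))\,\overline{j(g,\gamma(a))}}\cdot\frac{k_c(\gamma(a),\gamma(b))}{k_c(\gamma(b),\gamma(a))}\ ,
\]
so the prefactor picks up the phase factor $\bigl(j(g,\gamma(a))/\overline{j(g,\gamma(a))}\bigr)\bigl(\overline{j(g,\gamma(b))}/j(g,\gamma(b))\bigr)$, i.e. $\exp\bigl(2i\arg j(g,\gamma(a)) - 2i\arg j(g,\gamma(b))\bigr)$.

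Next I would analyze how the line integral of $\rho$ transforms. Since $\rho_z = d_{\mathbb C}\log k_c(z,z)$ and the K\"ahler form $\omega = \tfrac12 d\rho$ is $U$-invariant, the pulled-back form $g^*\rho$ differs from $\rho$ by a closed $1$-form; more precisely, from the covariance of $k_c$ along the diagonal, $k_c(g(z),g(z)) = |j(g,z)|^2\,k_c(z,z)$, hence
\[
\log k_c(g(z),g(z)) = \log k_c(z,z) + \log|j(g,z)|^2\ ,
\]
and applying $d_{\mathbb C}$ gives $g^*\rho = \rho + d_{\mathbb C}\log|j(g,z)|^2$. Now the crucial identity is that $d_{\mathbb C}\log|j(g,z)|^2 = -2\,d\arg j(g,z)$: indeed $\log|j|^2 = \log j + \log\bar\jmath = 2\,\mathrm{Re}\log j$ while $\arg j = \mathrm{Im}\log j = \tfrac1{2i}(\log j - \log\bar\jmath)$, and since $\log j$ depends holomorphically on $z$ one checks $d_{\mathbb C}(\mathrm{Re}\,h) = -d(\mathrm{Im}\,h)$ for holomorphic $h$ (because $d_{\mathbb C} = -i(\partial - \overline\partial)$ and $h$ is holomorphic makes $\overline\partial h = 0$, $\partial\bar h = 0$). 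Therefore
\[
\int_{g\gamma}\rho = \int_{\gamma} g^*\rho = \int_{\gamma}\rho - 2\int_{\gamma} d\arg j(g,z) = \int_{\gamma}\rho - 2\bigl(\arg j(g,\gamma(b)) - \arg j(g,\gamma(a))\bigr)\ ,
\]
using that $\gamma$ is defined along a curve on which $g$ acts, so $z\mapsto \arg j(g,z)$ has a single-valued continuous determination there.

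Combining the two computations, the phase $\exp\bigl(2i\arg j(g,\gamma(a)) - 2i\arg j(g,\gamma(b))\bigr)$ coming from the $k_c$-prefactor is exactly cancelled by $\exp\bigl(-2i(\arg j(g,\gamma(b)) - \arg j(g,\gamma(a)))\bigr)$ coming from the change in $\int\rho$, which yields $\Phi(g\gamma)=\Phi(\gamma)$, i.e. \eqref{eq:invariant_cocycle_cpt}. The main obstacle I anticipate is bookkeeping with the multivalued argument/logarithm: one must make sure that $z\mapsto j(g,z)$ stays in a region where a continuous branch of the argument exists along the whole curve $\gamma$ (guaranteed since by hypothesis $g$ is defined at every point of $\gamma$, so $j(g,\gamma(t))\in\mathbf K$ is a nonvanishing, continuously varying quantity whose determinant is nonzero), and that the ``$\mathrm{Re}$ vs $\mathrm{Im}$'' identity $d_{\mathbb C}(\mathrm{Re}\,h) = -d(\mathrm{Im}\,h)$ is applied with the right sign convention dictated by $d_{\mathbb C} = -i(\partial - \overline\partial)$. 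Everything else is a direct substitution using \eqref{covkc} and \eqref{potential}.
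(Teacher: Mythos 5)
Your strategy is exactly the one the paper follows: transform the endpoint ratio of $k_c$ via the covariance relation \eqref{covkc} (using $\tau(g)=g$ for $g\in U$), transform $\int\rho$ via the diagonal identity $k_c(g(z),g(z))=\vert j(g,z)\vert^2k_c(z,z)$ together with a holomorphic branch of $\log j(g,\cdot)$ along $\gamma$, and check that the two phase contributions cancel. However, there is a sign error at the Cauchy--Riemann step, and as written your final cancellation does not go through. With the paper's convention $d_{\mathbb C}=-i(\partial-\overline\partial)$ and $h$ holomorphic, the correct identity is $d_{\mathbb C}(\RE h)=+\,d(\Im h)$, not $-\,d(\Im h)$: writing $\RE h=\tfrac12(h+\bar h)$ and using $\overline\partial h=0$, $\partial\bar h=0$, one finds
\[
d_{\mathbb C}(\RE h)=-\tfrac{i}{2}\bigl(\partial h-\overline\partial\bar h\bigr)=\tfrac{1}{2i}\bigl(dh-d\bar h\bigr)=d(\Im h)
\]
(sanity check on $h=z$ in $\mathbb C$: $d_{\mathbb C}x=-i\cdot\tfrac12(dz-d\bar z)=dy$). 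The paper's proof uses this identity with the plus sign.

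The consequence is material. The correct transformation law is $\int_{g\gamma}\rho=\int_{\gamma}\rho+2\bigl(\Im\log j(g,\gamma(b))-\Im\log j(g,\gamma(a))\bigr)$, whose exponential $\exp\bigl(2i\arg j(g,\gamma(b))-2i\arg j(g,\gamma(a))\bigr)$ is the \emph{inverse} of the prefactor phase $\exp\bigl(2i\arg j(g,\gamma(a))-2i\arg j(g,\gamma(b))\bigr)$ that you correctly extracted from \eqref{covkc}; their product is $1$ and the lemma follows. With your sign, the integral contributes $\exp\bigl(-2i(\arg j(g,\gamma(b))-\arg j(g,\gamma(a)))\bigr)$, which is literally \emph{equal} to the prefactor phase rather than its inverse, so your concluding sentence asserts that two identical unimodular factors ``cancel'' --- they do not; the product would be the square of the phase, and the identity \eqref{eq:invariant_cocycle_cpt} would fail. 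The fix is the one-sign correction $d_{\mathbb C}(\RE h)=d(\Im h)$; everything else in your argument, including the observation that a continuous branch of $\log j(g,\cdot)$ exists along $\gamma$ because $g$ is defined at every point of the curve, matches the paper's proof.
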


\begin{proof}
  Let $z$ be any point on $\gamma$. It follows from
  \eqref{covkc} that 
\begin{equation*}
  k_c\big(g(z),g(z)\big) = j(g,z)k_c(z,z)\overline{j(g,z)}
\end{equation*}
and hence 
\begin{equation*}
  \log k_c\big(g(z),g(z)\big) = \log \vert{j(g,z)}\vert^{2} + \log k_c(z,z).
\end{equation*}
Now from $d_{\mathbb C}\log k(z,z)=\rho_z$ and the above we see that
\begin{align*}
  \int\limits_{g\gamma}\rho &= \int_{\gamma} d_\mathbb C g^*\log k_c(z,z)\\
&= \int\limits_{\gamma} d_\mathbb C \log \vert{j(g,z)}\vert^2
  +\int\limits_{\gamma}d_\mathbb C\log k(z,z),
\end{align*}
and since the action of $g$ is defined along $\gamma$ we may choose a
holomorphic logarithm of $z\mapsto j(g,z)$ along
$\gamma$. This logarithm, denoted $\log j(g,z)$, has real part
$\log\vert{j(g,z)}\vert$ and it follows from the Cauchy-Riemann equations
that $d_{\mathbb C}\log\vert{j(g,z)}\vert = d\Im \log j(g,z)$. Hence, 
$d_{\mathbb C}\log \vert{j(g,z)}\vert^2=2d \Im \log j(g,z)$ and
\begin{equation*}
  \int\limits_{g\gamma}\rho = 2\Im \log j(g,\gamma(b)) - 2\Im \log
  j(g,\gamma(a)) + \int\limits_{\gamma}\rho
\end{equation*}
follows. After taking
exponentials we obtain
\begin{equation}\label{eq:exp_path_integral}
  \exp i\int\limits_{g\gamma}\rho =
  \frac{j(g,\gamma(b))}{\overline{j(g,\gamma(b))}}\frac{\overline{j(g,\gamma(a))}}{j(g,\gamma(a))}
  \exp i\int\limits_{\gamma}\rho.
\end{equation}
Using \eqref{covkc} again, we see that 
\begin{equation}\label{eq:compact_transformation_relative}
 \frac{k_c(g\gamma(a),g\gamma(b))}{k_c(g\gamma(b),g\gamma(a))} =
  \frac{j(g,\gamma(a))}{\overline{j(g,\gamma(a))}}
  \frac{k_c(\gamma(a),\gamma(b))}{k_c(\gamma(b),\gamma(a))} \frac{\overline{j(g,\gamma(b))}}{j(g,\gamma(b))} 
\end{equation}
and upon combining \eqref{eq:exp_path_integral} with
\eqref{eq:compact_transformation_relative} we obtain (\ref{eq:invariant_cocycle_cpt}).
\end{proof}

\begin{lemma}\label{lem_rho_vanishes}
  Let $\gamma\colon [a,b]\to \mathfrak p_+$ be a geodesic segment passing
  through $0$. Then
  $\rho_{\gamma(t)}(\dot\gamma(t))=0$ for all $t$.
\end{lemma}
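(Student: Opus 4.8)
Plan for the proof of Lemma 5.10 ($\rho_{\gamma(t)}(\dot\gamma(t)) = 0$ for a geodesic segment $\gamma$ through $0$).

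The plan is to compute $\rho$ along the geodesic directly, using the explicit description of geodesics through the origin via the polysphere $\mathbf{S}(\Gamma)$ together with the product formula \eqref{ksuraplus} for $k_c$ on $\mathfrak a_+$. First I would invoke Corollary \ref{geodesicpolysphere}: after acting by a suitable $u \in U$ fixing $0$ (i.e. $u \in K_0$), the geodesic $\Xi \circ \gamma$ through $o$ can be assumed to lie in the maximal torus $T(\Gamma) = \exp(i\mathfrak a_0)$, hence $\gamma(t)$ takes values in $\mathfrak a_+ = \bigoplus_j \mathbb C X_{\gamma_j}$. Since $\rho$ is defined via the $U$-covariant potential $\log k_c(z,z)$, Lemma \ref{lem_invariant_cocycle_compact} (or rather its infinitesimal consequence, the transformation rule $\int_{g\gamma}\rho = 2\Im\log j(g,\gamma(b)) - 2\Im\log j(g,\gamma(a)) + \int_\gamma \rho$ applied to subsegments) shows that the vanishing of $\rho_{\gamma(t)}(\dot\gamma(t))$ is unaffected by composing with $u \in K_0$ — here one uses that for $u \in K_0$ the cocycle $j(u,z)$ is locally constant along a curve through $0$, or more simply that $K_0$ acts linearly on $\mathfrak p_+$ so $j(u,\cdot)$ is constant. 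So it suffices to treat $\gamma$ valued in $\mathfrak a_+$.

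On $\mathfrak a_+$, write $\gamma(t) = \sum_{j=1}^r \zeta_j(t) X_j$. By Proposition with formula \eqref{ksuraplus}, $k_c(\gamma(t),\gamma(t)) = \prod_j (1 + |\zeta_j(t)|^2)^p$, so $\log k_c(\gamma(t),\gamma(t)) = p\sum_j \log(1+|\zeta_j(t)|^2)$, which is a real-analytic function of the real and imaginary parts of the $\zeta_j$. The one-form $\rho = d_{\mathbb C}\log k_c(z,z) = -i(\partial - \overline\partial)\log k_c(z,z)$. The key point: since $\Xi \circ \gamma$ is a geodesic through $o$ inside the flat torus $T(\Gamma)$, the curve $\gamma(t)$ in coordinates is, componentwise, the image in the chart of a geodesic in $S^{(j)} \cong \mathbb C\mathbb P_1$ through $o_j$; by the $SU(2)$-calculation recalled in Section 5.2, such a geodesic has the form $\zeta_j(t) = (\tan t)\,\beta_j$ with $|\beta_j|$ fixed (up to a common reparametrization), i.e. each $\zeta_j(t)$ moves along a fixed complex ray through $0$. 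Hence I can write $\zeta_j(t) = c_j \lambda(t)$ with $c_j \in \mathbb C$ constant and $\lambda(t)$ real-valued (real because a ray through $0$ in the chart of $\mathbb C\mathbb P_1$ is the fixed-$\beta$ real geodesic; after the $K_0$-normalization one may arrange all the phases so that $\lambda$ is real). Then $|\zeta_j(t)|^2 = |c_j|^2 \lambda(t)^2$ depends only on $\lambda^2$, and one computes $\rho_{\gamma(t)}(\dot\gamma(t))$ by pairing $-i(\partial-\overline\partial) \log k_c$ with $\dot\gamma(t) = \lambda'(t)\sum_j c_j X_j$. Because $\log k_c(z,z)$ restricted to this ray is a function of $|z|$-type quantities only (even in the radial variable), its $d_{\mathbb C} = -i(\partial-\overline\partial)$ differential, contracted with the real radial tangent vector, vanishes: concretely, for a function $f(|u|^2)$ on $\mathbb C$, $d_{\mathbb C} f = -i f'(|u|^2)(\bar u\, du - u\, d\bar u)$, and contracting against a real multiple of $u\partial_u + \bar u \partial_{\bar u}$ (the radial direction) gives $-if'(|u|^2)(|u|^2 - |u|^2) = 0$. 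Summing over $j$ finishes it.

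I expect the main obstacle to be the bookkeeping needed to justify that, after moving $\gamma$ into $\mathfrak a_+$ by $K_0$, each coordinate $\zeta_j(t)$ genuinely runs along a \emph{fixed complex line} through the origin with a \emph{real} parameter — i.e. that a geodesic through $o$ in $T(\Gamma)$ projects to each factor $S^{(j)}$ as one of the standard real geodesics $t \mapsto (\tan t)\beta_j$ of Section 5.2, and that the common phases can be absorbed simultaneously. This follows because $T(\Gamma) = \prod_j \exp(i\mathfrak a_0^{(j)})$ and a geodesic through $o$ in a flat torus is a one-parameter subgroup $t \mapsto \exp(itH)o$ with $H = \sum_j H_j \in \mathfrak a_0$, which on the $j$-th factor is exactly the geodesic with tangent vector determined by $H_j$; translating through the isomorphism $\mathfrak p_0 \to \mathfrak p_+$, $X \mapsto \frac12(X - iJX)$, each such geodesic has chart-expression $\zeta_j(t) = (\tan(\|H_j\| t))\,\beta_j$ with $\beta_j$ a unit vector depending only on $H_j$. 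The only subtlety is uniformizing the speeds $\|H_j\|$, but since we only need $\rho_{\gamma(t)}(\dot\gamma(t)) = 0$ pointwise in $t$ and the argument above is local and works for an arbitrary reparametrization of each ray independently, this causes no real difficulty. Alternatively, and perhaps more cleanly, one can avoid coordinates entirely: use Lemma \ref{lem_invariant_cocycle_compact} to reduce to showing $\int_\gamma \rho = 0$ for \emph{any} geodesic segment $\gamma$ through $0$, then exploit the symmetry $s_0 \in K_0$ acting as $z \mapsto -z$ on $\mathfrak p_+$ (the geodesic symmetry at $o$): it reverses $\gamma$, fixes $0$, and since $j(s_0, \cdot)$ is constant on the linear action, $\int_{\gamma}\rho = \int_{s_0\gamma}\rho = -\int_\gamma\rho$, forcing $\int_{\gamma'}\rho = 0$ on every subsegment $\gamma'$ through $0$ and hence $\rho_{\gamma(t)}(\dot\gamma(t)) = 0$ — but this needs $\rho$ to be $s_0$-invariant, which requires $d_{\mathbb C}\log|j(s_0,z)|^2 = 0$, true since $j(s_0,z)$ is constant. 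I would present the symmetry argument as the main line and keep the coordinate computation as a fallback.
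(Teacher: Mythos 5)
Your ``fallback'' coordinate computation is essentially the paper's own proof: reduce by the $\Ad(K_0)$-invariance of $k_c$ (hence of $\rho$) to a geodesic running in $\mathfrak a_+$, use the product formula \eqref{ksuraplus} to split $\log k_c(z,z)$ into a sum over the rank-one factors, and observe that each component of the geodesic is a real ray $t\mapsto \beta_j\lambda_j(t)$ through $0$ in $\mathbb C$, on which $d_{\mathbb C}$ of the radial function $\log(1+\vert u\vert^2)$ pairs to zero with the radial tangent vector. That part is complete and correct; as you note, the independent reparametrizations $\lambda_j$ cause no trouble because the vanishing is checked factor by factor.

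The argument you propose to present as the main line, however, has a genuine gap. The point symmetry $s_0\colon z\mapsto -z$ maps the segment from $\gamma(a)$ to $\gamma(b)$ to the segment from $-\gamma(a)$ to $-\gamma(b)$; this is the orientation-reversal of the original segment only when the segment is symmetric about $0$. For such symmetric segments you do get $\int_{\gamma'}\rho=-\int_{\gamma'}\rho=0$, but that only says that the primitive $F(s)=\int_0^s\rho_{\gamma(t)}(\dot\gamma(t))\,dt$ is an even function of $s$, which yields $\rho(\dot\gamma)=0$ at the parameter where $\gamma$ passes through $0$ and gives nothing at other points. Concretely, a form such as $f(\vert z\vert^2)\,(x\,dx+y\,dy)$ on $\mathbb C$ is invariant under $z\mapsto -z$ and under all rotations, integrates to zero over every segment of a line through $0$ that is symmetric about $0$, and yet is nonzero on radial tangent vectors away from the origin. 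So invariance under the holomorphic isotropy cannot by itself force the pointwise conclusion; what is really used is that $\rho$ is $d_{\mathbb C}$ of a rotation-invariant real function, i.e.\ the explicit computation. (A symmetry argument could be made to work, but only with an anti-holomorphic symmetry fixing the geodesic pointwise --- e.g.\ complex conjugation in each $\mathbb C\mathbb P_1$ factor --- under which $d_{\mathbb C}$, hence $\rho$, changes sign; that is not the symmetry you invoke.) Keep the coordinate computation as the proof.
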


\begin{proof}
This proof is a variation of the proof of Theorem 9.1 in
\cite{co2} in which a similar statement played a key
role. Since $k_c(z,w)$ is $\Ad (K_0)$-invariant, so if $\rho$ and we
may therefore assume that $\gamma$ runs in $\mathfrak a_+$. Write $\gamma(t) =
\sum_{k=1}^r \gamma_k(t)X_k$ where $\gamma_k :  [a,b]\to \mathbb C$ is a
geodesic in the Riemann sphere $\mathbb C \mathbb P_1$. Put
\[k_{\mathbb C^r}(z,w)=\prod_{k=1}^r (1+z_k\overline{w_k})^2\] 
for
$z=(z_1,\dots,z_r),w=(w_1,\dots,w_r)$ in $\mathbb C^r$. Then
\begin{align*}
  (d_{\mathbb C}\log k_c )(\dot\gamma) &= \frac{p}{2}( d_{\mathbb C} \log k_{\mathbb C^r})
                                (\dot\gamma_1,\dots,\dot\gamma_r) \\
& =\frac{p}{2} \sum_{k=1}^r (d_{\mathbb C}\log k_{\mathbb C} )(\dot\gamma_k)
\end{align*}
and the calculation reduces to the situation $\mathbb C \mathbb P_1$. Here each
$\gamma_k$ is a line through the origin and
$k_{\mathbb C}(z,z)=(1+\vert{z}\vert^2)^2$ and it is straightforward to verify
that $(d_{\mathbb C}\log k_{\mathbb C}(z,z))(\dot\gamma)$ vanishes.
\end{proof}

\begin{theorem}\label{thm:compact_curve_int_rho_argument}
 Let $(z,w)\in \mathcal S$ and let $\gamma=\beta_{z,w}$. Then 
\begin{equation}\label{eq:exponentiated_path_int_compact}
  \exp \frac{1}{i}\int\limits_{\gamma} \rho= \frac{k_c(z,w)}{k_c(w,z)}
\end{equation}
and 
\begin{equation}\label{eq:path_int_arg_compact}
 \frac{1}{2} \int\limits_{\gamma}\rho = -\arg k_c(z,w)\ .
\end{equation}
\end{theorem}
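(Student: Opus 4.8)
The plan is to combine the invariance lemma (Lemma \ref{lem_invariant_cocycle_compact}) with the vanishing lemma (Lemma \ref{lem_rho_vanishes}) in the way familiar from \cite{co2, w}. First I would observe that by Lemma \ref{Spolysphere} there is an element $u\in U$ which is defined along all of $\gamma=\beta_{z,w}$ and satisfies $u(z)=0$; write $z'=u(z)=0$ and $w'=u(w)$. Since $\Xi\circ\gamma$ is a geodesic segment from $\Xi(z)$ to $\Xi(w)$, the curve $u\gamma$ (which makes sense because $u$ is defined along $\gamma$) is the curve in $\mathfrak p_+$ representing the unique minimizing geodesic from $0$ to $\Xi(w')$, i.e. $u\gamma=\beta_{0,w'}$, and in particular it passes through $0$ at its initial point.

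Next I would apply Lemma \ref{lem_invariant_cocycle_compact} with this $g=u$: the quantity
\[
\frac{k_c(\gamma(a),\gamma(b))}{k_c(\gamma(b),\gamma(a))}\,\exp i\int_\gamma\rho
\]
is unchanged when $\gamma$ is replaced by $u\gamma$. So it suffices to evaluate it on the curve $\beta_{0,w'}$ from $0$ to $w'$. Here Lemma \ref{lem_rho_vanishes} applies directly: this geodesic segment passes through $0$, so $\rho_{(u\gamma)(t)}(\dot{(u\gamma)}(t))=0$ identically, whence $\int_{u\gamma}\rho=0$ and the exponential factor is $1$. On the other hand $k_c(0,w')=1=k_c(w',0)$ by \eqref{ksuraplus} (or just because $K_c(0,\cdot)$ and $K_c(\cdot,0)$ are the identity), so the ratio $k_c(0,w')/k_c(w',0)=1$ as well. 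Therefore the invariant quantity equals $1$, which rearranges to
\[
\exp i\int_\gamma\rho = \frac{k_c(w,z)}{k_c(z,w)} = \overline{\left(\frac{k_c(z,w)}{k_c(w,z)}\right)},
\]
and taking $\exp\frac{1}{i}\int_\gamma\rho$ (the complex conjugate, since $\int_\gamma\rho$ is real) gives \eqref{eq:exponentiated_path_int_compact}. I should double-check the sign/conjugation bookkeeping here, since $k_c(w,z)=\overline{k_c(z,w)}$ by the Hermitian symmetry of the kernel, so $k_c(z,w)/k_c(w,z)$ has modulus $1$ and equals $e^{2i\arg k_c(z,w)}$ for any continuous branch.

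For the second formula \eqref{eq:path_int_arg_compact} I would take logarithms in \eqref{eq:exponentiated_path_int_compact}, using the branch of $\arg k_c$ normalized by $\arg k_c(z,z)=0$ constructed at the end of Section 5: since $\mathcal S$ is simply connected and $\gamma=\beta_{z,w}$ deforms within $\mathcal S$ to the constant path at $(w,w)$ where both sides vanish, the identity $\frac12\int_\gamma\rho=-\arg k_c(z,w)$ holds as an equality of continuous functions on $\mathcal S$ (not merely mod $2\pi$). Concretely, $\exp\frac{1}{i}\int_\gamma\rho=\exp(-2i\arg k_c(z,w))$ and both exponents are continuous on the connected set $\mathcal S$ and agree at the diagonal, hence agree everywhere. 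The main obstacle is really just getting the conjugations and the factor of $2$ (coming from $\omega=\frac12 d\rho$ and from $k_c$ being a square on the sphere) consistent throughout; the geometric input — transporting the geodesic to pass through the origin and invoking Lemma \ref{lem_rho_vanishes} — is the conceptual heart and is already isolated in the preceding lemmas.
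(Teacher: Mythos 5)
Your proposal is correct and follows essentially the same route as the paper: transport the geodesic by the element $u$ from Lemma \ref{Spolysphere} so that it starts at the origin, combine Lemma \ref{lem_invariant_cocycle_compact} with Lemma \ref{lem_rho_vanishes} to get $1=\frac{k_c(z,w)}{k_c(w,z)}\exp i\int_\gamma\rho$, and then upgrade the resulting congruence mod $2\pi$ to the exact identity \eqref{eq:path_int_arg_compact} by continuity on the simply connected set $\mathcal S$ together with the normalization $\arg k_c(z,z)=0$ on the diagonal. The conjugation and sign bookkeeping you flag works out exactly as you suspect, using $k_c(w,z)=\overline{k_c(z,w)}$ and the reality of $\int_\gamma\rho$.
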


\begin{proof}
  Let $u\in U$ be as in  Lemma \ref{Spolysphere}. As $u(z)=o$ we combine
   Lemma \ref{lem_rho_vanishes} and
  \ref{lem_invariant_cocycle_compact} to obtain
\begin{equation*}
  1 = \frac{k_c(z,w)}{k_c(w,z)}\exp i\int\limits_{\gamma}\rho,
\end{equation*}
  proving \eqref{eq:exponentiated_path_int_compact}. As
  $k_c(w,z)=\overline{k_c(z,w)}$ we have, for all $(z,w)\in \mathcal S$
\begin{equation*}
 \exp\left(-i\int_{\beta_{z,w}} \rho\right)= \exp 2i\arg k_c(z,w),
\end{equation*}
Hence $-\int_{\beta_{z,w}} \rho$ and  $2\arg k_c(z,w)$  are two continuous functions on $\mathcal S$ which differ by a multiple of $2\pi$, are equal to $0$ on the diagonal of $\mathfrak p_+\times \mathfrak p_+$, hence coincide everywhere on $\mathcal S$, so that  \eqref{eq:path_int_arg_compact} holds.
\end{proof} 
Now, if we are given a triple $(z_0,z_1,z_2)$ of points in $\mathfrak p_+$
such that each of the pairs $(z_i,z_j)$ belong to $\mathcal S$, then we may
form an oriented geodesic triangle $\mathcal T = \mathcal T(z_0,z_1,z_2)$ as
follows: The triangle $\mathcal T$ is made up of the three unique
shortest geodesic segments connecting the three vertices $z_0,z_1$,
and $z_2$ with orientation given by traversing the boundary in the
order $z_0 \to z_1\to z_2 \to z_0$. If $\Sigma$ is a smooth surface in
$\mathfrak p^+$ with $\mathcal T=\partial \Sigma$, then $\int_{\Sigma}\omega$ only depends
on the boundary $\mathcal T$ and therefore we will not specify any
particular "filling" of $\mathcal T$.

\begin{theorem}\label{thm:compact_type_area_formula}
  Let $(z_0,z_1,z_2)$ be a triple of points in $\mathfrak p_+$ and suppose
  that each pair $(z_i,z_j)$ belongs to $\mathcal S$. Construct the oriented
  geodesic triangle $\mathcal T = \mathcal T(z_0,z_1,z_2)$ as above. Then
\begin{equation}\label{DT}
  \int\limits_{\Sigma}\omega = - \big(\arg k_c(z_0,z_1)+\arg
  k_c(z_1,z_2)+\arg k_c(z_2,z_0)\big)
\end{equation}
holds for any smooth surface $\Sigma\subset \mathfrak p_+$ with $\mathcal T$ as
its boundary.
\end{theorem}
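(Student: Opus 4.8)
The plan is to reduce the surface integral of $\omega$ over $\Sigma$ to the boundary integral of $\rho$ over $\mathcal T$ via Stokes, and then apply Theorem \ref{thm:compact_curve_int_rho_argument} on each of the three geodesic edges. First I would recall that on $\mathfrak p_+$ we have the global potential identity $\omega = \tfrac12 d\rho$ coming from \eqref{potential}, valid on all of $\mathfrak p_+$ (since $k_c(z,z) > 0$ everywhere, $\log k_c(z,z)$ is a genuine smooth function and $\rho_z = d_{\mathbb C}\log k_c(z,z)$ is a globally defined smooth $1$-form). Hence by Stokes' theorem, for any smooth surface $\Sigma \subset \mathfrak p_+$ with $\partial \Sigma = \mathcal T$,
\[
\int_\Sigma \omega = \frac12 \int_\Sigma d\rho = \frac12 \int_{\partial\Sigma}\rho = \frac12\int_{\mathcal T}\rho.
\]
This already shows the surface-independence claimed in the statement (any two such surfaces differ by a $2$-cycle in the simply connected — indeed contractible — space $\mathfrak p_+$, on which $\omega$ restricts to the exact form $\tfrac12 d\rho$, so the integral over the cycle vanishes).

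The second step is to decompose the boundary integral into the three edges. Since $\mathcal T$ is traversed in the order $z_0 \to z_1 \to z_2 \to z_0$ along the geodesic segments $\beta_{z_0,z_1}$, $\beta_{z_1,z_2}$, $\beta_{z_2,z_0}$, we have
\[
\frac12\int_{\mathcal T}\rho = \frac12\int_{\beta_{z_0,z_1}}\rho + \frac12\int_{\beta_{z_1,z_2}}\rho + \frac12\int_{\beta_{z_2,z_0}}\rho.
\]
Now I would invoke \eqref{eq:path_int_arg_compact} of Theorem \ref{thm:compact_curve_int_rho_argument} to each term: for each pair $(z_i,z_j)\in\mathcal S$ we have $\tfrac12\int_{\beta_{z_i,z_j}}\rho = -\arg k_c(z_i,z_j)$. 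Summing the three contributions gives exactly \eqref{DT}.

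The only genuine subtlety — and the step I would flag as needing care rather than being purely routine — is the legitimacy of applying Stokes' theorem, i.e. that there actually exists a smooth surface $\Sigma\subset\mathfrak p_+$ filling $\mathcal T$ and that the orientations match so that $\partial\Sigma = \mathcal T$ as oriented chains. Here one should note that $\mathfrak p_+$ is a (finite-dimensional complex) vector space, hence contractible, so the geodesic triangle $\mathcal T$ (a piecewise-smooth loop) bounds; one can take, for instance, the cone over $\mathcal T$ from any interior point, or more carefully a smooth $2$-simplex, and the regularity of the edges $\beta_{z_i,z_j}$ (which are smooth away from the vertices, and the integrand $\rho$ extends smoothly across the vertices since $\rho$ is globally smooth on $\mathfrak p_+$) ensures the boundary integral is well-defined and Stokes applies. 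One also records that the argument function $\arg k_c(\cdot,\cdot)$ used here is the continuous determination on $\mathcal S$ fixed by $\arg k_c(z,z) = 0$, as set up at the end of Section 5, so that the right-hand side of \eqref{DT} is unambiguous. Beyond these bookkeeping points, the proof is a direct assembly of the already-established pieces.
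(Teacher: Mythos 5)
Your proposal is correct and follows exactly the paper's own argument: write $\omega=\tfrac12 d\rho$, apply Stokes' theorem to reduce to the boundary, and use \eqref{eq:path_int_arg_compact} on each of the three geodesic edges. The extra bookkeeping you supply (contractibility of $\mathfrak p_+$, global smoothness of $\rho$, choice of the continuous determination of $\arg k_c$ on $\mathcal S$) is consistent with what the paper leaves implicit.
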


\begin{proof}
  We have $\omega = \frac{1}{2}d\rho$ so the result follows after an
  application of Stoke's theorem and
  (\ref{eq:path_int_arg_compact}) on each of the three geodesic
  segments of $\mathcal T$.
\end{proof}
In particular, for a geodesic triangle $\mathcal T(0,z_1,z_2)$ with
$(z_1,z_2)\in\mathcal S$ we see that the symplectic area of $\mathcal T$
is given by $-\arg k_c(z_1,z_2)$. This result essentially appears in several
articles by S. Berceanu, see e.g. \cite{b1} and
\cite{b2}. It is proven by direct calculation for the complex
Grassmannian. See also \cite{bs} where the
authors use an embedding of $U/K_0$ into projective space $\mathbb C \mathbb P_N$ and
give an interpretation of the argument of $k_c$ in terms of the
symplectic area of geodesic triangles in the ambient space
$\mathbb C \mathbb P_N$. Hangan and Masala \cite{hm} already proved an
exponentiated version of Theorem \ref{thm:compact_type_area_formula}.

At this point, in order to make connection with the global point of view adopted in Section 4, it is convenient to renormalize the metric and the K\"ahler form so that the metric has maximal holomorphic sectional curvature equal to $+1$. The new metric $\widetilde q$ is equal to $\frac{2}{p}\,q$ (see details in \cite{co2}). Let $\widetilde \omega$ be the corresponding normalized K\"ahler form.

One can show (see e.g. \cite{kor}) that there is a unique $K$-invariant $h(z,w)$ on $\mathfrak p_+\times \mathfrak p_+$, holomorphic in $z$ and conjugate-holomorphic in $w$ such that 
\[h(z,w) = \prod_{j=1}^r (1+z_j\overline w_j)\ ,
\]
when $z=\sum_{k=1}^r z_kX_k$ and $w=\sum_{k=1}^rw_kX_k$ and that 
 $k_c(z,w) = h(z,w)^p$ for any pair of $(z,w)$.  Now set 
 \[\widetilde k_c(z,w) = h(z,w)^2\ .\]  
 
 Recalling  the fact that the factor of normalization from $\omega$ to $\widetilde \omega$ is $\frac{2}{p}$, \eqref{potential} becomes
 \[ \text{for } z\in \mathfrak p_+,\qquad \widetilde {\omega}_z = i\, \partial \overline \partial  \log \widetilde k(z,z)\ .
 \]
 and consequently, the main formula \eqref{DT} remains valid with $\widetilde \omega$ (resp. $\widetilde k_c$) replacing $\omega$ (resp. $k_c$).

\footnotesize{ \noindent Address\\ Mads Aunskj\ae r  Bech 
\quad Institut for Matematiske Fag, Aarhus Universitet, Ny Munkegade 118, 8000 Aarhus C (Denmark)
\\  Jean-Louis Clerc\quad Institut \'Elie Cartan, Universit\'e de Lorraine 54506 Vand\oe uvre-l\`es Nancy (France)\\ Bent \O rsted \quad Institut for Matematiske Fag, Aarhus Universitet, Ny Munkegade 118, 8000 Aarhus C (Denmark)
\smallskip

\noindent \texttt{{madsbech88@gmail.com\\jean-louis.clerc@univ-lorraine.fr \\ orsted@imf.au.dk
}}


\begin{thebibliography}{bc}

\bibitem{mab} M.A. Bech, {\it Canonical kernels on Hermitian symmetric spaces}, Ph.D. Dissertation, Aarhus University (2017)

\bibitem{b1} S. Berceanu, {\it Coherent states, phases and symplectic area of geodesic triangles}, ArXiv Mathematics e-prints (1999)

\bibitem{b2} S. Berceanu, {\it Geometrical phases on Hermitian symmetric spaces}, ArXiv mathematics e-prints (2004)

\bibitem{bs} S. Berceanu and M. Schlichenmaier, {\it Coherent states embeddings, polar divisors and Cauchy formulas}, J. Geom. and Physics {\bf 34}, 336--358 (2000)

\bibitem{bgg} I. Bernstein, I. Gel'fand and S Gel'fand, {\it Schubert cells and cohomology of the spaces $G/P$}, Russian Math. Surveys {\bf 28}, 1--26  (1973)

\bibitem{bo} A. Borel, {\it K\"ahlerian coset spaces of semisimple Lie groups}, proc. Nat. Acad. Sci. U.S.A. {\bf 40} , 1147--1151 (1954)

\bibitem{co1} J.-L. Clerc and B. \O rsted, {\it The Maslov index revisited}, Transform. Groups, {\bf 6}, 303--320 (2001)

\bibitem{co2} J.-L. Clerc and B. \O rsted, {\it The Gromov norm of the K\"ahler class and the Maslov index},  Asian J. Math. {\bf 7}, 269--295 (2003)

\bibitem{cr} R. Crittenden, {\it Minimum and conjugate points in symmetric spaces}, Canad. J. Math. {\bf 14}, 320--328 (1962)

\bibitem{dc} M. do Carmo, {\it Riemannian geometry}, Birkh\"auser (1992)

\bibitem{dt} A. Domic and D. Toledo, {\it The Gromov norm of the K\"ahler class of symmetric domains}, Math. Ann., {\bf 276}, 425--432 (1987)

\bibitem{hm} T. Hangan and G. Masala, {\it A geometrical interpretation of the shape invariant for geodesic triangles in complex projective spaces}, Geom. Dedicata {\bf 49}, 129--134 (1994)

\bibitem{fkk} J. Faraut and al., {\it Analysis and geometry on complex homogeneous domains}, Progress in Mathematics, vol. 185, Birkh\"auser (2000)

\bibitem{h} S. Helgason, {\it Differential geometry, Lie groups and symmetric spaces}, Graduate Studies in Mathematics, Amer. Math. Soc. (2001)

\bibitem{hsphere} S. Helgason, {\it Totally geodesic spheres in compact symmetric spaces}, Math. Annalen {\bf 165}, 309--317(1966)

\bibitem{kor} A. Kor\'anyi, {\it Function Spaces on bounded symmetric domains}, Analysis and geometry on complex homogeneous domains, chapter III, Progress in Mathematics {\bf 185}, Birkh\"auser (2000)

\bibitem{n} H. Naitoh, {\it On cut loci and first conjugate loci of the irreducible symmetric R-spaces and the irreducible compact hermitian symmetric spaces}, Hokkaido Math. J. {\bf 6}, 230--242 (1977)

\bibitem{sa} T. Sakai, {\it On cut loci on compact symmetric spaces}, Hokkaido math. J. {\bf 6}, 136--161 (1977)

\bibitem{s} I. Satake, {\it Algebraic structures of symmetric domains}, Kan\^o Lectures, vol. 4, Iwanami Shoten, Princeton University Press (1980)

\bibitem {tak}M. Takeuchi, {\it Basic transformations of symmetric R-spaces}, Osaka J. Math. {\bf 25}, 259--297 (1988)

\bibitem{t} H. Tasaki, {\it The cut locus and the diastasis of a Hermitian symmetric space of compact type}, Osaka J. Math. {\bf 22}, 863--870 (1984)

\bibitem{w} A. Wienhard, {\it Bounded cohomology and geometry}, Dissertation, Rheinische Friedrich-Wilhelms Universit\"at, Bonn (2004)

\bibitem{wo} J. Wolf, {\it Fine structure of Hermitian symmetric spaces}, in Symmetric Spaces (Short Courses, Washington University, Saint-Louis), 271--357, Pure and Appl. Math., {\bf 8}, Dekker (1972)

\end{thebibliography}
\end{document}